%% file: main.tex
\documentclass[10pt]{article}

\input{header}
\usepackage{macros}

\title{Estimation and Recovery of a Planted Dense Subgraph from a Single Network Cascade}

\author{
Maximilien Dreveton\textsuperscript{1}, 
Paula Mürmann\textsuperscript{2}, 
Patrick Thiran\textsuperscript{2} 
}
\date{}

\newcommand{\Ik}{\mathcal{I}_k}
\newcommand{\Ikc}{\mathcal{I}_k^c}
\newcommand{\Ck}{\mathcal{C}_k}
\newcommand{\tth}{^{\text{th}}}
\newcommand{\gnp}{G_{n,{p_n}}}
\newcommand{\gnpx}{G(n,p_n,\alpha,\xi)}

\newcommand{\stleq}{\preceq_{\text{st}}}

\newcommand{\prujF}{\pr\left(u_{K_1+j} \in V_1 \cond \cF_{K_1 +j-1}\right)}

\newcommand{\Cut}{\operatorname{Cut}}
\newcommand{\bigO}{\mathcal{O}}

\begin{document}

\maketitle

\begin{center}
\begin{tabular}{c}
\textsuperscript{1} LAMA, UMR-CNRS 8050, Université Gustave Eiﬀel \\ \texttt{maximilien.dreveton@univ-eiffel.fr} \\ 
\textsuperscript{2} École Polytechnique Fédérale de Lausanne (EPFL) 
\end{tabular}
\texttt{\{paula.murmann,patrick.thiran\}@epfl.ch}
\end{center}

\begin{center}
\today
\end{center}

\begin{abstract}
  \input{abstract}
\end{abstract}

\input{text}

\bibliographystyle{chicago}
\bibliography{biblio.bib}

\appendix
\input{appendix}

\end{document}

%% file: abstract.tex
We study the inference of a planted dense component in a sparse random graph from a single spreading process. The graph has an \Erdos--\Renyi background with edge probability $p_n$ and contains a planted dense component of size $\lfloor n^\alpha\rfloor$, with $\alpha>1/2$, whose internal edge density $\xi>0$ is constant. The edge set is unobserved; the data only consist of the successive infection times from a single realization of a continuous-time SI process with independent and exponentially distributed transmission times. We show that the planted dense component leaves a detectable signature in the spreading process: after the exploration enters the dense component, it undergoes a short phase of accelerated growth. By analyzing this phase, we localize its onset and endpoint. These localization results yield consistent estimators of the component-size exponent $\alpha$, the background edge density~$p_n$, and the internal edge density $\xi$ from the infection times alone. When the identities of the infected vertices are also observed, we further establish consistent recovery of the planted dense component. 

%% file: text.tex
\section{Introduction}

Graphs and stochastic processes on graphs are fundamental models for complex systems in which local interactions give rise to macroscopic dynamics. A canonical example is first-passage percolation, in which an exploration process propagates through a graph by traversing the shortest weighted paths. 
Such exploration processes arise in several settings: in epidemics, vertices represent individuals, edges represent contacts, and edge weights represent transmission times; in communication or transportation networks, weights may represent delays or travel times; and in information diffusion, weights may represent the time needed for a signal, a rumor, or an innovation to pass between connected agents. 
%Depending on the application, vertices may represent individuals, routers, locations, or agents, while edge weights encode transmission times, communication delays, travel times, or the time needed for information to spread across a connection.
%In an epidemic interpretation, vertices represent individuals, edges represent contacts, and edge weights represent transmission times. 

In many applications, however, the underlying graph is not observed, as knowing who was in contact with whom typically requires expensive and error-prone contact tracing~\citep{stargel_case_2022}. 
This motivates the problem of recovering the structure of the graph directly from the data generated by the exploration times of a spreading process. 
More precisely, we call \textit{network cascade} the observation model in which one observes the exploration timestamps, and possibly the identities of the explored vertices, but \textit{not} the edge set of the graph nor the edge through which each vertex was infected. Previous work has shown that, under appropriate conditions, the edge set of a bounded-degree graph can be reconstructed from $\Omega(\log n)$ independent network cascades on the same underlying graph with~$n$ vertices~\citep{gomez_rodriguez_inferring_2010, netrapalli_learning_2012,
abrahao_trace_2013, gray_bayesian_2020, wilinski_prediction-centric_2021}.

Recovering the full graph is, however, a challenging task, as it entails identifying every edge.
More recent work has considered the inference of coarser structural features from cascade data.
In particular,~\cite{mossel_high_deg_2024,brandenberger_detecting_change_points_2025} study the detection of a finite number of high-degree vertices, or ``super-spreaders'', with degrees of order $n^\alpha$. 
Their guarantees hold uniformly over a broad class of admissible underlying graphs with a finite number of super-spreaders; they prove that detecting the existence of a super-spreader is possible for $\alpha>1/2$ and impossible (in a worst-case sense) for $\alpha<1/2$, but recovering it requires at least $1/(2\alpha-1)$ independent cascades. 
%In particular, Assuming the observation of several independent cascades, detecting the existence of a super-spreader is possible for $\alpha>1/2$ and impossible for $\alpha<1/2$, but recovering it requires to observe a number of independent cascades that diverges as $\alpha$ approaches $1/2$.
%Assuming the observation of a finite number of independent cascades, they showed that such detection is possible for $\alpha>1/2$, and impossible if $\alpha<1/2$, and the number of cascades required increases as $\alpha$ approaches $1/2$. 

Observing several independent cascades on the same graph is a strong requirement: In an epidemic setting, for instance, this would amount to observing several independent outbreaks spreading through the same contact network. It is therefore natural to ask what structural information can be inferred from a \textit{single} cascade.

%Nevertheless, observing several independent cascades on the same graph is a strong requirement.  In an epidemic setting, for instance, this would amount to observing several independent outbreaks spreading through the same contact network. Moreover, recovering the full graph is also a very strong requirement. It is therefore natural to ask what graph properties can be inferred from a single realization. 

In this paper, we study this question for a sparse random graph containing a planted dense component. The graph has $n$ vertices, a sparse background edge probability $p_n = o(1)$, and a planted dense component of size $\lfloor n^\alpha\rfloor$ inside which edges appear with a fixed probability~$\xi$. Each edge is weighted with an independent exponential random variable of rate~$1$. We observe one network cascade, i.e. the shortest path distances from a random source to all other vertices, on this graph. The central question is whether the planted dense component leaves a detectable signature in the cascade, even though the graph itself is not observed. 

The key object in our analysis is the cut process seen by the exploration. If $\cI_k$ denotes the set of the $k$ first explored vertices, we define the cut $\Cut(\cI_k)$ as the set of edges from the explored set to its complement. Then the next inter-arrival is, conditionally on the graph and current exploration state, distributed as $\Exp( | \Cut(\cI_k) | )$. Thus, the cascade does not reveal the graph directly, but it samples the evolution of $C_k$ through exponential waiting times. In a homogeneous sparse graph, this cut size is approximately $p_n k(n-k)$. Once the exploration enters the planted dense component, an additional contribution appears, of strength $(\xi-p_n) I_1(k)\bigl(|V_1|-I_1(k)\bigr)$,
where $I_1(k) := |V_1 \cap \cI_k |$ is the number of explored vertices in~$V_1$ among the $k$ first explored vertices. 
This additional contribution creates a short high-rate phase in the exploration, which lasts until $V_1$ has been fully explored. The main probabilistic contribution of the paper is a detailed analysis of this phase: we show that the exploration rapidly enters and then almost exhausts the dense component, and we quantify the location, duration, and magnitude of the resulting acceleration in the cut process.

As statistical consequences of this probabilistic analysis, we construct estimators from a single cascade. From the ordered exploration times, we consistently estimate the size exponent $\alpha$, the background density $p_n$, and the dense-component density $\xi$. If vertex labels are also observed, then we can also consistently recover the planted dense component. 

This problem is related to, but different from, the classic planted clique and planted dense subgraph detection and recovery problems. In that literature, the graph is observed and the task is to detect or recover a denser-than-background vertex set. For planted cliques in $G(n,1/2)$, detection is impossible below the size of the
largest naturally occurring clique \citep{matula_largest_1976,grimmett_colouring_1975}, while polynomial-time algorithms are known above the $\sqrt n$ scale \citep{kucera_expected_1995, alon_finding_1998}, with a conjectured statistical-computational gap; see, for instance, \citep{feige_finding_2000, feige_finding_2010,dekel_finding_2014,deshpande_finding_2015,gamarnik_landscape_2024}. The planted dense subgraph setting, where the planted component need not be a clique, has also been
studied extensively~\citep{arias-castro_community_2014,verzelen_community_2015,hajek_information_2016, montanari_finding_2015,hajek_computational_2015,brennan_reducibility_2018,brennan_universality_2019}.  In contrast, our setting does not reveal the edge set. The only available information is the sample path of a stochastic process running on the graph. This changes the nature of the problem: the planted component must be inferred from the way it accelerates the exploration, rather than from direct inspection of the adjacency matrix of the graph.

%Our work is also connected to recent results on structural inference from a finite number of cascades.  In particular, \cite{mossel_high_deg_2024} and~\cite{brandenberger_detecting_change_points_2025} show that a finite number of high-degree vertices, or ``super-spreaders'', having degrees $n^{\alpha}$ with $\alpha > 1/2$ can be detected from cascade data. These works address arbitrary graphs in which the relevant signal is vertex-level degree heterogeneity, and the number of cascades required grows unbounded as the degree exponent~$\alpha$ approaches the critical value $1/2$. In this work, we instead study an average-case random graph model with a planted dense component and observe a single cascade. The dense set is not made of isolated high-degree vertices but is a small subgraph whose collective internal connectivity creates a short high-rate phase in the exploration process. 
Although the related works~\citep{mossel_high_deg_2024,brandenberger_detecting_change_points_2025} also infer coarse structural information from cascade data, our work differs significantly. Indeed, in \cite{mossel_high_deg_2024,brandenberger_detecting_change_points_2025}, a finite number of well-separated high-degree vertices creates vertex-level degree heterogeneity, whereas in our model the signal is collective and arises from the internal connectivity of a planted dense subgraph. We consider inference from a single cascade in an average-case random graph model, whereas these works adopt a worst-case perspective over a broad class of underlying graphs and require several cascades for recovery. 

Moreover, community detection from network cascades has also been studied by~\cite{prokhorenkova2022less}, which develop and compare methods based on likelihood maximization and on clustering surrogate graphs constructed from infection timestamps, with the aim of inferring a community partition from collections of cascades, but without providing theoretical guarantees. %Our focus is on consistency guarantees for a single planted dense component inferred from one cascade, with parameter estimation requiring only the unlabeled exploration times. 

Finally, our work contributes to the study of first-passage percolation on random graphs. Distances and exploration processes with exponential edge weights have been extensively studied on complete graphs and \Erdos-\Renyi graphs, often through branching-process approximations~\citep{janson_one_1999,bhamidi_first_2010,barbour_approximating_2013,bhamidi_front_2014}. 
Our regime and objective are different. We work with a sparse but sufficiently connected background graph and a planted inhomogeneity, and we analyze the exploration not only to understand its metric behavior, but also to infer latent structure from one realization of the process. 

The rest of the paper is organized as follows. Section~\ref{sec:setup} introduces the graph and cascade models, the observation models, and the main results. Section~\ref{sec:exploration_curve} analyzes the cut process and the growth of the exploration inside the dense component. This analysis is then used in Sections~\ref{sec:detection} and~\ref{sec:estimation} to establish the consistency of the dense-component phase detection and of the model-parameter estimators.
Section~\ref{sec:numerical_experiments} presents numerical experiments. 
Finally, Section~\ref{sec:exploration_curve_proofs} contains the main proofs, while more technical estimates are deferred to the Appendix.

\paragraph*{Notations}
 We use standard asymptotic notation. For two sequences $a_n,b_n$, we write $a_n\ll b_n$ if $a_n/b_n\to0$; $a_n\gg b_n$ if $b_n/a_n\to0$; and $a_n\sim b_n$ if $a_n/b_n\to1$. We additionally write $b_n = o(a_n)$ to denote any sequence $b_n\ll a_n$; $\omega(a_n)$ to denote any sequence $b_n\gg a_n$; $b_n=\bigO(a_n)$ if $\lim\sup_{n\to\infty}|b_n/a_n|<\infty$; $b_n=\Omega(a_n)$ if $\lim\inf_{n\to\infty}|b_n/a_n|>0$; and $b_n=\Theta(a_n)$ if $a_n=\bigO(b_n)$ and $b_n=\bigO(a_n)$.

We say that a sequence of events $(A_n)_{n \ge 1}$ occurs \emph{with high probability} (or \emph{w.h.p.} in short) if $\lim_n \pr(A_n) = 1$. For a sequence of random variables $(X_n)_{n \in \Z_+}$ and a deterministic sequence $(z_n)_{n \in \Z_+}$, we write $X_n = \bigO_{\pr}(z_n)$ if $X_n / z_n$ is a tight sequence of random variables, and $X_n = o_{\pr}(z_n)$ if $X_n / z_n$ converges to $0$ in probability. 
%We say that a sequence of random variables $X_n$ converges \emph{in probability} to $X$ (denoted $X_n\wprto X$) if for any $\varepsilon>0$, $|X_n-X|<\varepsilon$ holds with high probability. We say that a sequence of random variables $X_n$ converges \emph{in distribution} to $X$ (denoted $X_n\stackrel d{\to}X$) if $\pr(X_n\le x)\to\P(X\le x)$ for any $x$ where $\pr(X\le x)$ is continuous. 
We say that a random variable \(X\) is stochastically dominated by \(Y\), and write \(X\stleq Y\), if $\pr(X\ge t)\le \pr(Y\ge t)$ for every $t\in\R$. 

For a graph $G=(V,E)$ and a subset $V' \subseteq V$, we denote by $G[V']$ the subgraph of $G$ induced by $V'$, consisting of the vertices in $V'$ and all edges of $G$ whose endpoints both belong to $V'$. Finally, for a set \(V\), we write \(\binom{V}{2}\) for the set of unordered pairs of distinct elements of \(V\), that is $\binom{V}{2} := \bigl\{\{u,v\}: u,v\in V,\ u\neq v\bigr\}$.

\section{Definitions and Main Results}
\label{sec:setup}

%\paragraph{Notation} For a set $S$, we let $|S|$ denote its cardinality. For a graph $G = (V, E)$, we denote the number of vertices by $n = |V|$ and let $N(v)$ denote the neighborhood of a vertex $v$. For a subset of vertices $S \subseteq V$ , we denote $\Cut(S) := |\{(u, v) \in E \colon u \in S, v \not \in S\}|$. 

\subsection{Graph and cascade model} 
\label{subsection:model}

Let $G=(V,E)$ be an undirected graph on $n$ vertices generated as follows.
The vertex set is partitioned as $V = V_0 \cup V_1$, with $|V_1| = \lfloor n^{\alpha} \rfloor =: n_1$ and $|V_0|=n-\lfloor n^{\alpha} \rfloor$, where $\alpha \in (0,1)$ is unknown and independent of $n$. Edges are generated randomly and independently according to:
\begin{align} \label{eq:edge_prob}
 \pr \left( \{u,v\} \in E \cond V_0, V_1\right) \weq 
 \begin{cases}
      \xi & \text{ if } u \in V_1 \text{ and } v \in V_1 \\
      p_n & \text{ otherwise, }
 \end{cases}
\end{align}
where $\xi \in (0,1]$ is constant and $p_n \in (0,1)$ depends on $n$. 

Throughout, we always assume that $\xi \ge p_n$, and we call $V_1$ the dense component and~$V_0$ the sparse background. We will always assume that $\lim n p_n = \infty$, ensuring that almost all vertices belong to a (unique) giant connected component. We denote this random graph model by $G(n,p_n,\alpha,\xi)$. Note that this is equivalent to a stochastic block model with blocks $V_0$ and $V_1$, edge probability $\xi$ between vertices in $V_1$ and probability $p_n$ otherwise.

We consider a continuous-time first-passage  (FPP) process on $G$, which is equivalent to a susceptible-infected (SI) model with independent exponential transmission times along edges. The process starts at time $t_1=0$ with an explored vertex $u_1$ chosen uniformly at random in~$V$. Each edge joining an explored vertex to an unexplored vertex transmits independently at rate $\lambda$,\footnote{We assume $\lambda$ is known and rescale time so that $\lambda =1$.} and vertices remain explored once reached, and vertices remain explored once reached. 
For a vertex $v\in V$, let
\[
T(v) \, := \, \inf\{ t\ge 0 \colon v \text{ has been explored by time } t\}
\]
denote its exploration time, with the convention $T(v)=\infty$ if $v$ is never reached from the initial vertex $u_1$ (this happens if and only if $v$ does not belong to the same connected component as the initial vertex). We write
\[
 0 \weq t_1 \wle t_2 \wle \cdots \wle t_n
\]
for the order statistics of the collection $\{T(v) \colon v\in V\}$, and we let $u_k$ denote the vertex explored at time $t_k$, so that $T(u_k)=t_k$. When $G$ is connected, the exploration times are almost surely distinct, and hence
$
0 = t_1 < t_2 < \cdots<t_n .
$

We consider two observation models. The first one consists only of the exploration times, whereas the second one also records the identities of the explored vertices. The two models are denoted $\cD_{\mathrm{time}}$ and $\cD_{\mathrm{id}}$, and we have 
\[
\mathcal D_{\mathrm{time}} := (t_1,\ldots,t_n) 
\quad \text{ and } \quad 
\cD_{\mathrm{id}} := \left(( (u_1,t_1), \cdots, (u_n,t_n) \right). 
\]

\subsection{Main results} 
Define $\cI_k := \{ u_1, \cdots, u_k\}$, the set of the first $k$ explored vertices, and 
\begin{align*}
 K_1:=\min\{k \colon u_k\in V_1\} 
 \quad \text{ and } \quad 
 K_2 := 1+\min\{k \colon V_1 \subseteq \cI_k\}. 
 %K_2^{\star}:=K_1 + \lfloor n^{\alpha} \rfloor.
\end{align*}
The quantity $K_1$ (resp., $K_2$) is the first step at which the exploration visits a vertex in the dense component (resp., the first exploration index after all vertices of $V_1$ have been explored). Thus $[K_1,K_2)$ is the interval between the first visit to the dense component and its complete exploration. 
For convenience, we also define the reference endpoint
\[
    K_2^\star:=K_1+n_1.
\]
If all vertices of $V_1$ were explored consecutively after the first visit, then $K_2=K_2^\star$; although this is not the case, Proposition~\ref{prop:I1_in_V1_bd} ensures that $K_2-K_2^\star=O_{\mathbb P}(np_n\log n)$.
\begin{comment}
We also introduce the reference endpoint
\[
    K_2^\star:=K_1+n_1.
\]
If all vertices of $V_1$ were explored consecutively after the first visit, then $K_2=K_2^\star$.  In the actual process, occasional explorations in $V_0$ may occur before $V_1$ is exhausted, so that $K_2\ge K_2^\star$.  Proposition~\ref{prop:I1_in_V1_bd} shows that $K_2-K_2^\star=\cO_{\pr}(np\log n).$
The estimator $\widehat K_2$ is designed to localize the reference endpoint $K_2^\star$, rather than the random completion index $K_2$, because near the end of the dense phase the additional cut induced by the remaining unexplored vertices of $V_1$ becomes comparable to the background cut.

---- 

The index~$K_2^{\star}$ should be interpreted as follows: if, after first entering $V_1$, the exploration visited all vertices of $V_1$ consecutively, then the dense-component phase would occupy the interval $[K_1,K_2^{\star})$. In the actual process, occasional activations outside $V_1$ may occur during this phase, and a small number of vertices of $V_1$ may be explored after step $K_2^{\star}$. Thus, $K_2^{\star}$ is \textit{not} defined as the last activation step of a vertex in $V_1$, as estimating it would be noisy; rather, it is defined to identify the size of $V_1$ through the relation $K_2^{\star}-K_1 = |V_1|$. \marginnote{Do we have $K_2^\star$ or $K_2$? Later we use $K_2$ I think}
\end{comment}

Our first main result shows that the latent dense-phase indices and the model parameters can be consistently estimated from exploration times $\cD_{\mathrm{time}}$.

\begin{theorem}[Localization and estimation from the exploration times]
\label{thm:estimation}
Suppose that the graph is generated according to $G(n,p_n,\alpha,\xi)$, with $1 \ll np_n \ll n^{2\alpha-1} / \log n $, $\alpha\in(1/2,1)$, and $\xi\in(0,1]$. 
Then, there exist estimators $\hK_1$, $\hK_2$, $\halpha$, $\hp$ and $\hxi$, computed from the exploration times $\cD_{\mathrm{time}}$ of a single continuous-time FPP process with known rate $\lambda=1$, such that the following statements hold. 

\begin{enumerate}
 \item The dense-phase change-points are localized as %\marginnote{Updated $8.1$ to $4.1$.}
 \[
 \pr\left( \left| \hK_1 - K_1 \right| \wle 4.1 \log n \right) = 1-o(1)
 \quad \text{ and } \quad 
 \left|\hK_2 - K_2^{\star} \right| \weq \cO_{\pr}\left( n p_n \log n \right).
 \] 
 \item The model parameters are consistently estimated:
 \[
 \frac{|\widehat p-p_n|}{p_n} \weq \cO_{\pr} \left( \frac{1}{\sqrt{ n p_n } } \right),
  \quad 
 |\widehat\xi-\xi| = o_{\pr}(1), 
 \quad 
 |\widehat\alpha-\alpha|
 \weq \cO_{\pr} \left( n^{1-\alpha} p_n \right). %\cO\left( \frac{\log^{3/2}n+p_n n\log n}{n^\alpha\log n} \right). 
 \]
\end{enumerate}
\end{theorem}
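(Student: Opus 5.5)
The plan is to reduce everything to the inter-arrival statistics. Conditionally on the graph and on $\cF_{k}$ (the exploration history up to step $k$), the waiting time $\tau_k := t_{k+1}-t_k$ is $\Exp(|\Cut(\cI_k)|)$. Write $C_k := |\Cut(\cI_k)|$ and decompose
\[
C_k \;\approx\; p_n k(n-k) + (\xi-p_n) I_1(k)\bigl(n_1-I_1(k)\bigr).
\]
The first term is the background contribution. The second is nonzero only on $[K_1,K_2)$, and there it reaches order $n_1^2$. Since $n p_n k \ll n^{2\alpha}$ whenever $np_n \ll n^{2\alpha-1}/\log n$, the dense term dominates in the bulk of the phase.

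The first step is a concentration statement for the background cut, uniformly in $k$ outside the dense phase. With high probability, $C_k = p_n k(n-k)\bigl(1+O(\sqrt{\log n/(kp_n n)})\bigr)$ for $k$ larger than a slowly growing threshold. This follows from Chernoff bounds on the sum of the independent edge indicators from $\cI_k$ to its complement, after conditioning on the exploration order. The standard principle of deferred decisions works here, because the unexplored edges are still fresh.

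The second step uses the earlier propositions on the dense phase, in particular Proposition~\ref{prop:I1_in_V1_bd}. Once the exploration enters $V_1$, $I_1(k)$ grows like a Yule-type process. Within $O(\log n)$ steps the rate $\xi I_1(n_1-I_1)$ exceeds the background rate by a factor $\gg 1$. Most of $V_1$ is then explored nearly consecutively, and $K_2-K_2^\star = O_{\pr}(np_n\log n)$.

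The estimators come next. I would use the rescaled increments $Z_k := \tau_k\, k(n-k)$. Outside the phase these are approximately i.i.d.\ $\Exp(p_n)$. Inside the bulk of the phase, $Z_k$ shrinks by a factor of order $p_n n\, k/n_1^2$.

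For $\widehat p$, I take the inverse sample mean of $Z_k$ over an initial window $k\le m$ of length $m\asymp$ a suitable power of $n$, with $m$ chosen so that w.h.p.\ $K_1 > m$. This choice works because $K_1$ is of order $n^{1-\alpha}$: each explored vertex hits $V_1$ with probability about $n_1/n$. The relative error is $O_{\pr}(1/\sqrt{m})$ plus the cut-concentration error. Taking the window large enough gives the stated $1/\sqrt{np_n}$ rate. If needed, I would instead average over a window of size $np_n$ after a short burn-in.

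For $\widehat K_1$, I use a CUSUM / likelihood-ratio scan. $\widehat K_1$ is the first $k$ such that $Z_{k},\dots,Z_{k+\lceil c\log n\rceil}$ all fall below $\widehat p^{-1}n^{-\delta}$. For background steps, the chance that $c\log n$ consecutive $\Exp(p_n)$ variables fall below this level is $n^{-c\delta}$, which is negligible after a union bound over $n$ indices. After $K_1$, the Yule growth of $I_1$ needs about $\log n/\xi$ steps before the drop in $Z_k$ becomes visible. Tuning the constants this way should produce the explicit $4.1\log n$ window.

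For $\widehat K_2$, I use a symmetric end-of-phase scan: the last index after which $Z_k$ returns to the background level $\widehat p^{-1}$ in a sustained way. Near the end, $n_1-I_1$ is small, and the dense rate $\xi I_1(n_1-I_1)\approx \xi n_1(n_1-I_1)$ becomes comparable to $p_n n k$ when $n_1-I_1 \approx np_n\, k/(\xi n_1)$. This is $O(np_n)$ vertices. Combined with Proposition~\ref{prop:I1_in_V1_bd}, this gives the $O_{\pr}(np_n\log n)$ error relative to $K_2^\star$.

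Set $\widehat\alpha := \log(\widehat K_2-\widehat K_1)/\log n$. Since $\widehat K_2-\widehat K_1 = n_1(1+O_{\pr}(np_n\log n/n^\alpha))$, the error is $O_{\pr}(n^{1-\alpha}p_n\log n/\log n)$, which matches the claim.

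For $\widehat\xi$, I use the total time spent in the dense phase. Integrating $1/(\xi I(n_1-I))$ over $I$ gives $t_{\widehat K_2}-t_{\widehat K_1}\approx 2\log n_1/(\xi n_1)$. Hence $\widehat\xi := 2\log(\widehat K_2-\widehat K_1)/\bigl((\widehat K_2-\widehat K_1)(t_{\widehat K_2}-t_{\widehat K_1})\bigr)$. An alternative is a ratio estimator over the middle half of the phase, where $Z_k\approx k(n-k)/(\xi I_1(n_1-I_1))$ and $I_1\approx k-K_1$. I prefer the middle-half version because it avoids the log-scale boundary effects: summing the $\tau_k$ gives a concentrated sum of order $1/(\xi n_1)$ with negligible relative fluctuation.

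The main obstacle is the sharp localization of $K_1$, specifically controlling the early Yule phase. When $I_1$ is small, the dense rate $\xi I_1 n_1$ competes with the background rate $p_n nk$, and occasional $V_0$ steps are interleaved. So I must show two things: $I_1(k)$ grows geometrically, with $I_1(K_1+j)\ge e^{cj}$ w.h.p.\ for $j\ge C\log n$, and the detection threshold is crossed neither before $K_1$ nor later than $4.1\log n$ steps after it. I would handle this by coupling with a pure-birth process, using the earlier exploration-curve lemmas, and then doing careful constant bookkeeping in the scan statistic.
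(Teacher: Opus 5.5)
There are genuine gaps in the localization of $K_1$ and in the rate for $\widehat p$. \textbf{Your scan for $\widehat K_1$ fails under the stated hypothesis, and the obstacle you name is not the real one.} Right after entry, with $j=k-K_1+1$, the background fraction of the cut is $p_nk(n-k)/C_k\approx p_nK_1n/(\xi jn_1)$. This is of order $np_n\,n^{1-2\alpha}/j$, and $np_n\ll n^{2\alpha-1}/\log n$ only makes it $o(1/\log n)$; for $np_n=n^{2\alpha-1}/\log^2 n$ the gap is merely polylogarithmic. So for $j=O(\log n)$ the rescaled increment $\tau_kk(n-k)$ has conditional mean only a polylog factor below $1/p_n$. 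It falls below $\widehat p^{-1}n^{-\delta}$ with probability $O(n^{-\delta}\,\mathrm{polylog}\,n)$, and your rule first fires about $n^{\delta}$ steps after $K_1$. A fixed polynomial threshold presupposes a polynomial gap that is not available. You need a threshold at a constant fraction of the background level, combined with averaging over $\Theta(\log n)$ steps. The paper does exactly this with the bounded transforms $\sqrt{12}(e^{-\widehat m_ka_k}-1/2)$, bins of width $w=\lceil c\log n\rceil$ with $c>12/z^2$, and threshold $z/2$; the choice $z=1.72$ produces $4.1\log n$. Moreover, $I_1(K_1+j)\ge e^{cj}$ is impossible: at most one vertex is explored per step, so $I_1(K_1+j)\le j+1$. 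The Yule-type growth happens in continuous time, not in the step index. What you actually need is Proposition~\ref{prop:I1_in_V1_bd}(i), $I_1(K_1-1+j)=j$ for $j\le D\log n$. It gives $C_k\ge c\,jn_1\gg p_nk(n-k)$ from the very first step after entry (Lemma~\ref{lm:jump_size}(ii)), so there is no $\log n/\xi$ detection delay. Had there been one, no $\xi$-free constant such as $4.1$ could come out.

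\textbf{Your $\widehat p$ cannot reach the rate $(np_n)^{-1/2}$.} Any window lying before $K_1$ w.h.p.\ has $O_{\mathbb P}(n^{1-\alpha})$ indices, and since $\alpha$ is unknown a deterministic choice must in fact be $n^{o(1)}$. The exponential noise alone then gives relative error at least of order $n^{-(1-\alpha)/2}$. This exceeds $(np_n)^{-1/2}$ whenever $np_n\gg n^{1-\alpha}$, which the hypothesis permits for $\alpha>2/3$; your fallback window of size $np_n$ does not fit before $K_1$ either. The paper uses the late window $[n/3,n/2]$, where $V_1$ is exhausted. There Lemma~\ref{lem:post_dense_cut}(ii) gives $C_k=p_nk(n-k)(1+O_{\mathbb P}((np_n)^{-1/2}))$ over $\Theta(n)$ indices, so the error is driven by cut fluctuations rather than sample size.

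Two smaller points. First, your total-time $\widehat\xi$ is inconsistent. A single background step before $K_1$ lasts about $1/(p_nK_1n)$, and the hypothesis makes this $\gg 2\log n_1/(\xi n_1)$; a bin-based $\widehat K_1$ includes such a step with non-vanishing probability. At the other end, the last $\asymp np_n$ levels of $V_1$, where the signal is indistinguishable from background, carry a fraction $\approx\log(np_n)/(2\log n_1)$ of the elapsed time. Your middle-half ratio estimator is the right choice. Its window depends on the $a_k$ through $\widehat K_1,\widehat K_2$, however, so you still need an oracle-window comparison like the paper's $\mathcal W_\xi^\circ\subseteq\mathcal W_\xi^+$. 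Second, conditioning on the exploration order does \emph{not} leave the cut edges independent Bernoulli$(p_n)$, because the order is size-biased toward vertices with many explored neighbours. Uniform cut concentration needs a union bound over connected sets containing $u_1$ (Proposition~\ref{prop:homogeneous-cut-concentration}), or over supersets of $V_1$ after $K_2$.
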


The condition $np_n\gg 1$ ensures the concentration of the estimators, and the condition $np_n \ll \frac{n^{2\alpha-1}}{\log n}$ ensures that the sparse background does not absorb the acceleration caused by the dense component once the cascade enters $V_1$. Together, these conditions define a broad sparse regime. In particular, for any fixed $\alpha>1/2$, they are satisfied whenever $np_n=(\log n)^c$ for any constant $c>0$. 
Finally, for ease of exposition, the theorem and its proof are written for fixed \(\xi>0\), but we expect the argument to extend to sequences \(\xi_n= o(1)\), provided the excess density \(q_n=\xi_n-p_n\) satisfies suitable separability conditions. In that regime the natural conclusion would be relative consistency, \(|\widehat\xi-\xi_n|/\xi_n= o_{\pr}(1)\). 

 The condition $\alpha>1/2$ ensures that the dense component produces a sufficiently pronounced and sustained acceleration of the exploration process, so that it can be localized from a single cascade. 
 Although the $\sqrt n$ scale also appears in several related graph-inference problems, such as the planted clique problem and high-degree-vertex inference from several cascades, these settings involve different observation models or graph assumptions, as discussed in the introduction. Whether $\alpha>1/2$ is a fundamental threshold for inference in our model is an interesting open question for future work.

Our second main result concerns recovery of the vertex set $V_1$. Unlike the parameter estimators above, recovering $V_1$ requires observing the identities of the explored vertices. Theorem~\ref{thm:weak_recovery} controls the number of mistakes made by simply returning all the vertices explored between $\hat{K}_1$ and $\hat{K}_2$. 
%The simplest algorithm for recovering~$V_1$ is to return all the vertices explored between $\hat{K}_1$ and $\hat{K}_2$. Theorem~\ref{thm:weak_recovery} shows that this approach is consistent. % (as in \cite{hajek_information_2016}).

\begin{theorem}[Recovery of the dense component]
\label{thm:weak_recovery}
Assume the conditions and notations of Theorem~\ref{thm:estimation}. Suppose in addition that the identities of the explored vertices $\cD_{\mathrm{id}}$ are observed, and define $\widehat V_1 := \{u_k \colon \hK_1 \wle k \ < \ \hK_2\}$. Then,
\[
\left| \widehat V_1 \cap V_0  \right| \weq \bigO_{\pr} \left( n p_n \log n \right)
\quad \text{ and } \quad 
\left| \widehat V_1^c \cap V_1 \right| \weq \bigO_{\pr} \left( n p_n \log n \right).
\]
%\[\frac{ \left| V_1\Delta \widehat V_1 \right|}{|V_1|} \weq \cO\left( n^{1-\alpha} p_n \log n \right).\]
%where $\Delta$ denotes symmetric difference between two sets.
\end{theorem}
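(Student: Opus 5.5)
We prove the claim by comparing $\widehat V_1$ with the oracle set $V_1^\star := \{u_k \colon K_1 \le k < K_2^\star\}$, which contains exactly $n_1$ vertices, and by splitting the errors into an oracle part and a localization part. The symmetric difference $\widehat V_1 \,\Delta\, V_1^\star$ is contained in the set of vertices $u_k$ whose index $k$ lies between $\hK_1$ and $K_1$, or between $\hK_2$ and $K_2^\star$. It therefore has cardinality at most $|\hK_1-K_1| + |\hK_2-K_2^\star|$. By Theorem~\ref{thm:estimation}, this quantity is at most $4.1\log n + \cO_{\pr}(np_n\log n) = \cO_{\pr}(np_n\log n)$, using $np_n \gg 1$. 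This handles the localization part.

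It remains to bound $|V_1^\star \cap V_0|$ and $|V_1\setminus V_1^\star|$. These two quantities are equal: $|V_1^\star| = n_1 = |V_1|$ and $V_1\cap \cI_{K_1-1} = \emptyset$. Hence every vertex of $V_0$ in the window $[K_1,K_2^\star)$ displaces exactly one vertex of $V_1$ beyond index $K_2^\star$. So it suffices to show that the number of $V_0$-explorations during $[K_1,K_2^\star)$ is $\cO_{\pr}(np_n\log n)$.

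This bound is what Proposition~\ref{prop:I1_in_V1_bd} provides, since it yields $K_2-K_2^\star = \cO_{\pr}(np_n\log n)$. Indeed, every index in $[K_1,K_2)$ not used by a $V_1$-vertex is a $V_0$-exploration. The number of such indices is $K_2-1-K_1+1-n_1 = K_2-1-K_2^\star+1 \le K_2-K_2^\star$. Since $V_0$-explorations inside $[K_1,K_2^\star)$ form a subset of these, their number is at most $K_2-K_2^\star$. If the proposition is stated instead as a bound on $V_0$-explorations, we use it directly.

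Combining the two parts gives
\[
|\widehat V_1\cap V_0| \le |V_1^\star\cap V_0| + |\widehat V_1\Delta V_1^\star| = \cO_{\pr}(np_n\log n),
\]
and similarly
\[
|\widehat V_1^c\cap V_1| \le |V_1\setminus V_1^\star| + |\widehat V_1\Delta V_1^\star| = \cO_{\pr}(np_n\log n).
\]
The only delicate point is the oracle count, which rests entirely on Proposition~\ref{prop:I1_in_V1_bd}. Had it not been available, we would bound it directly. During the dense phase, conditionally on $\cF_{K_1+j-1}$, the probability that the next vertex lies in $V_0$ is the ratio of the $V_0$-cut, which is $\cO(p_n n (K_1+j))$, to the total cut, which is at least $\xi I_1(n_1-I_1)$. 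Summing this ratio over $j$ gives roughly $\sum_j np_n j/(j(n_1-j)) \approx np_n\log n_1$ in the bulk. This is the source of the $np_n\log n$ term, and the sum would be concluded with a Markov inequality.
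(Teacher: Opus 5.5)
Your proof is correct and is essentially the paper's argument. Both proofs compare $\widehat V_1$ with the reference set $V_1^\star=\{u_k: K_1\le k<K_2^\star\}$, bound $|V_1^\star\cap V_0|=|V_1\setminus V_1^\star|$ by $K_2-K_2^\star=\cO_{\pr}(np_n\log n)$ via Proposition~\ref{prop:I1_in_V1_bd}(ii), and control $|\widehat V_1\triangle V_1^\star|$ by $|\widehat K_1-K_1|+|\widehat K_2-K_2^\star|$; the paper merely bounds $|V_1\triangle\widehat V_1|$ in one step instead of treating the two error types separately.
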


In the theorem, the quantity $\left| \widehat V_1 \cap V_0  \right|$ describes the number of false positives, i.e., vertices outside the planted component that are incorrectly included in $\widehat V_1$. Similarly, $\left| \widehat V_1^c \cap V_1 \right| $ is the number of false negatives, i.e., vertices in the planted component that are missed by the estimator. Both errors are controlled, as under our assumptions, $np_n \log n \ll n^{2\alpha-1} \ll |V_1|$. 

\subsection{Overview of the Proof of Theorem~\ref{thm:estimation}} \label{sec:proof_overview}

For $k\geq 1$, let $\Ik:=\{u_1,\ldots,u_k\}$ and $\Ikc:=V\setminus\Ik$ denote respectively the explored and unexplored set of vertices after $k$ explorations, and let $C_k:=|\Cut(\Ik)|$ be the number of edges between $\Ik$ and $\Ikc$. 
The natural filtration of the exploration process is $\cF_k^{\mathrm{nat}} :=\sigma\bigl((u_i,t_i)_{1\leq i\leq k}\bigr).$ For the probabilistic analysis, we use instead the filtration $\cF_k:=\sigma\bigl(G,V_1,(u_i,t_i)_{1\leq i\leq k}\bigr),$ where $G$ denotes the unweighted underlying graph. Thus, $\Ik$ and $C_k$ are $\cF_k$-measurable, whereas the next increment $a_k:=t_{k+1}-t_k$ is not. We view $G$ as the random environment of the exploration process; unless otherwise specified, all probability statements are annealed, i.e., they average over both the graph and the exploration randomness.

The proof of Theorem~\ref{thm:estimation} proceeds in two steps.

\subsubsection{Characterization of the exploration rate in a pure \texorpdfstring{$\gnp$}{gnp} graph}

%The increments \(a_k:=t_{k+1}-t_k\) are well-defined only if \(t_{k+1}<\infty\). Under our assumptions \(np_n\to\infty\), the background graph has a unique giant component of size \(n-o(n)\), and the randomly chosen source vertex lies in it with probability \(1-o(1)\). When the source belongs to the giant component, all the increments for \(k\le n/2\) are finite with high probability. In the following, we will restrict $k \le n/2$ and implicitly work conditioning on the event that the source lies in the giant component. 
The inter-exploration times $a_k:=t_{k+1}-t_k$ are well-defined if $t_{k+1}<\infty$. As $np_n\to\infty$ by assumption, the background graph has a unique giant component of size $n-o(n)$, and the randomly chosen source vertex lies in it with probability $1-o(1)$. On this event, all increments $a_k$ with $k\leq n/2$ are finite with high probability, and throughout the analysis we restrict attention to $k\leq n/2$. By convention, we set $C_k = 0$ after the exploration stops.

By the memorylessness property of the exponential edge weights, conditionally on $\cF_k$ and on $C_k>0$, we have  
\[ 
a_k\mid\cF_k\sim\Exp(C_k). 
\] 

Describing the exploration times therefore reduces to analyzing the sequence of cut-sizes $(C_1,\ldots,C_{n-1})$. 
A natural, but incorrect, guess would be $C_k \sim \Bin(k(n-k),p)$, as there are $k(n-k)$ potential edges across the cut from $\cI_k$ to $\cI_k^c$. This holds for the complete graph~\citet{janson_one_1999}, where the cut size is deterministically $k(n-k)$, but not for a $\gnp$ graph with $p<1$, because the next infected vertex is not chosen uniformly from $\Ik^c$ but by sampling an edge uniformly at random from the cut and following it to its endpoint. Unexplored vertices with more neighbors in the explored set $\cI_k$ are thus more likely to be explored next, inducing a size bias that distorts the cut-size distribution across steps. 
However, we establish in Proposition~\ref{prop:homogeneous-cut-concentration} a uniform concentration of the cut-sizes $C_k$ around $k(n-k)p_n$. 

%To handle this dependency, we show that, with high probability, $C_k$ is sandwiched between two sums of independent binomials. For the upper bound, we compare the true exploration process to an auxiliary one in which the next vertex is chosen uniformly at random from $\Ikc$. This removes the size bias and yields, via stochastic domination, $C_k \stleq \Bin(k(n-k),p_n)$, which has expectation $p_nk(n-k)$. For the lower bound, we track the number of edges from each newly infected vertex into $\Ik$. A monotonicity argument, implemented through an FKG-type inequality, gives a matching stochastic lower bound with mean $pk(n-k)-(1-p)(k-1)$. Combined with Bernstein-type  inequalities, this shows that $C_k$ is tightly bounded with high probability. 

 \subsubsection{Exploration-rate deviation and change-point detection}
\label{subsubsec:change_point_detection}

We now explain how the perturbation created by the dense component can be detected from the exploration times. 

Before the exploration reaches \(V_1\), the process evolves as in an \Erdos-\Renyi graph $\gnp$. Thus, the rate of the \(k\)-th exploration is approximately \(m_k := p k(n-k).\) 
Once the first vertex of \(V_1\) is activated, however, the internal edges of the dense component create a much larger contribution to the active boundary, and the waiting times \(a_k\) between two subsequent explorations become much shorter than those predicted by the pure $\gnp$ baseline.

This produces a localized upward deviation in the exploration rate. The phenomenon is sharp at the beginning of the dense-component phase: once the cascade enters \(V_1\), the cut into the remaining vertices of \(V_1\) dominates the cut into the sparse background, and subsequent activations are overwhelmingly likely to remain inside \(V_1\). Thus, the exploration spends a short but macroscopically visible interval rapidly infecting the dense component. 

 To make this deviation visible, we transform each inter-exploration time $a_k$ using the pre-change background rate. More precisely, given $a_k$, we define 
\begin{align}
 \label{def:probability_integral_transform}
 Z_k := \sqrt{12}\left(e^{-m_k a_k}-\frac12\right),
\end{align}
where $m_k = pk (n-k)$.  
To motivate this choice, observe that, if $X\sim \Exp(m)$, then \(e^{-mX} \sim \Uni(0,1)\) and therefore \( \sqrt{12}\left(e^{-m X}-\frac12\right) \sim \Uni(-\sqrt 3,\sqrt 3)\) has zero mean and unit variance.
Hence, under an idealized baseline where the rate of $a_k$ is $m_k$, the random variables $Z_k$ would be centered, bounded, and have unit variance. After the process enters~$V_1$, however, the times between subsequent explorations become smaller, and the random variables $Z_k$ acquire a positive drift. 

 However, because $m_k$ involves the unknown quantity $p_n$, the quantity $Z_k$ in Equation~\eqref{def:probability_integral_transform} cannot be computed; we instead use $\hm_k = \hp_n k(n-k)$ to define a feasible transform $\widehat Z_k$, where $\hp_n$ is the estimate of $p_n$ defined as follows. 
Observe that the dense component is fully explored very early in the cascade: indeed, a.a.s., all vertices of $V_1$ are explored within the first $o(n)$ steps, \textit{i.e.,} $K_2 = o_{\pr}(n)$. After this, the dense-component contribution to the exploration disappears, and the cut size returns to the background scale. 
We therefore restrict attention to a window of explorations lying well after $K_2$, for example between $n/3$ and $n/2$. 
On this window, our analysis of the cut-size process implies that $C_k = p_n k(n-k)\bigl(1+o_{\pr}(1)\bigr)$ uniformly in $k$. 
As $a_k\mid\mathcal F_k\sim\Exp(C_k)$, the rescaled waiting times $k(n-k)a_k$ therefore have rates asymptotically close to~$p_n$, leading to the following feasible maximum likelihood estimator: 
 \begin{align}
  \label{eq:estimate_p}
   \hp_n \ := \ \frac{ \lfloor \frac{n}{2}\rfloor - \lfloor \frac{n}{3}\rfloor +1 }{\sum_{k = \lfloor n/3 \rfloor}^{ \lfloor n/2 \rfloor } k(n-k)\,a_k},
   %\qquad \cI_p \ := \ \bigl\{k \colon \tfrac{n}{3} \le k \le \tfrac{n}{2}\bigr\}.
 \end{align}

Using $\hm_k$ instead of $m_k$ in the formula~\eqref{def:probability_integral_transform} yields the feasible variables $\widehat Z_k$. We average these variables over bins of logarithmic size, to obtain the bin averages $\overline Z_j$. These bin averages concentrate near zero before the first exploration of $V_1$ and rise as the exploration accelerates inside the dense component. 
Hence, we define $\widehat K_1$ as the left endpoint of the first bin whose average exceeds a fixed threshold $H_1 \in (0,\sqrt{3})$, localizing $K_1$ to within one bin. Because bins have logarithmic sizes, we obtain $|\widehat K_1 - K_1 | = \cO_{\pr}(\log n)$, and the constant $4.1$ in Theorem~\ref{thm:estimation} comes from a finer choice of the constant $H_1$ and bin size. Moreover, as $K_1$ occurs early, we can in fact restrict this search to the first $\kappa_1 = \Theta(\sqrt{n})$ samples to control false positives. 
When $V_1$ is fully explored, the dense contribution to the cut fades and the bin averages return toward the background level. After a minimum separation from $\widehat K_1$, we define $\widehat K_2$ as the left endpoint of the first bin whose average falls below a second threshold. Although the endpoint $K_2$ is localized less precisely than the onset, these estimates consistently recover the dense-phase length and hence the exponent $\alpha$.

The estimation of $\xi$ is a bit more involved and is detailed in Section~\ref{subsec:estimating_xi}. 

\begin{algorithm}[!ht]
\caption{ Estimation of \(K_1\), \(K_2\), $p_n$, and \(\alpha\)}
\label{algo:k1k2_estimation}

\KwInput{
activation differences \(\{a_k\}_{k=1}^{\lceil n/2\rceil}\); 
%background rates \(\{m_k\}_{k=1}^{\kappa}\);
$K_1$-cutoff $\kappa_1$;
bin size \(w\);
thresholds \(H_1,H_2>0\);
minimum separation \(s_n\);
graph size \(n\)
}

\KwOutput{
background density $\widehat p_n$;
estimated change points \(\widehat K_1,\widehat K_2\);
exponent \(\widehat\alpha\)
}

\BlankLine
Compute $\widehat p_n$ as defined in~\eqref{eq:estimate_p}.

\For{$k=1,\ldots,\lceil n/2\rceil$}{
Compute the transformed increments:
$ Z_k \gets \sqrt{12}\left(e^{-\widehat m_k a_k}-\frac12\right),$ where $\widehat m_k = \widehat p_n k (n-k)$. 
}
Set
\(
 L \gets \left\lfloor \frac{n}{2w}\right\rfloor  \quad \text{and} \quad L_1 \gets \left\lfloor \frac{\kappa_1}{w}\right\rfloor
\)

\For{$j=1,\dots,L$}{
Compute the binned averages:
$\overline Z_j \gets\frac1w \sum\limits_{i=(j-1)w+1}^{jw} Z_i.$\label{eq:algo_binned_averages}
}
Find the first upward crossing:
$j_1^* \gets \min\left\{ j\in\{1,\dots,L_1\}:\overline Z_j>H_1 \right\}.$

\If{\(j_1^*\) does not exist}{
    \KwReturn{\(\widehat p_n, \texttt{none},\texttt{none},\texttt{none}\)}
}
Set $\widehat K_1 \gets (j_1^*-1)w$. 

Set the first bin from which the endpoint search begins:
$j_{\mathrm{start}} \gets j_1^*+\left\lceil \frac{s_n}{w}\right\rceil$.

Find the first downward crossing after $j_{\mathrm{start}}$: % the minimum separation:
\( j_2^* \gets \min\left\{ j\in\{j_{\mathrm{start}},\dots,L\} \colon \overline Z_j < H_2 \right\}. \)

\If{\(j_2^*\) does not exist}{
    \KwReturn{\(\widehat p_n, \widehat K_1,\texttt{none},\texttt{none}\)}
}

Set
$\widehat K_2 \gets (j_2^*-1)w$.

Estimate the dense-component size by
$\widehat n_1 \gets \widehat K_2-\widehat K_1$.

Set
$\widehat\alpha \gets \frac{\log \widehat n_1}{\log n}$.

\KwReturn{\( \widehat p_n, \widehat K_1,\widehat K_2,\widehat\alpha\)}

\end{algorithm}

\subsection{Use of Large Language Models}
In the preparation of this manuscript we used OpenAI's ChatGPT-5.5, 5.6 Sol, 6 Astra and Anthropic's Claude Opus 5.5 to edit and clarify language, develop proof ideas and generate code for the numerical experiments. Any results proposed by the large language model (LLM) were rigorously checked and rewritten, and the authors take full responsibility for the results presented hereafter. 

The ideation of the project and the approach to bound the exploration cut sizes were developed without LLM use. The lower bound on the cut size in Section~\ref{sec:exploration_curve} was developed with LLM support through many iterations until a valid approach was found. The upper bound was first developed without LLM usage, but LLM was later used to simplify the arguments. The estimation procedure was guided by exploratory numerical experiments, which helped the development of the theoretical arguments; LLMs were then used to refine these arguments and improve their clarity. 
The code for the experimental illustrations was first written with minimal LLM contribution, and later refactored and optimized by Claude Code Opus 5.5.

\section{Exploration Curve with and without a Dense Component}
\label{sec:exploration_curve}

 \subsection{Exploration Curve without a Dense Component}
\label{subsec:homogeneous-exploration}

Consider first the exploration on $G\sim \gnp$. 
For a deterministic set $S$ of size $k$, we have $|\Cut(S)| \sim \Bin(k(n-k),p_n)$. 
This does not hold for the stochastic exploration set, because the exploration favors vertices with more neighbors in the explored set. 
The following proposition shows that $C_k$ is concentrated around $k(n-k)p_n$, and we show that $C_k \stleq \Bin(k(n-k),p_n)$ in Appendix~\ref{subsec:stoc-upper-bound-cut}. 
 
%Nevertheless, the following stochastic upper bound holds.

\begin{proposition}%[Uniform cut concentration]
\label{prop:homogeneous-cut-concentration}
Consider the sequence $(C_1,\cdots,C_n)$ of cut-sizes of a continuous FPP process with rate $\lambda=1$ in a $\gnp$ graph. 
Assume that $n p_n\to\infty$. With probability $1-o(1)$, we have 
\[
 \max_{1\le k\le\lfloor n/2\rfloor} \left|\frac{C_k}{k(n-k)p_n}-1\right|
 \wle \sqrt{\frac{24\log(e \, n p_n)}{n p_n}}.
\]
\end{proposition}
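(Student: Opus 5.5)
The plan is to bound the upper and lower tails of $C_k$ separately, uniformly in $k\le\lfloor n/2\rfloor$. In both cases I would use a Chernoff/Bernstein bound for each fixed $k$ and then a union bound over $k$. Set $\varepsilon_n:=\sqrt{24\log(e\,np_n)/(np_n)}$ and $\mu_k:=k(n-k)p_n\ge knp_n/2$. A multiplicative Chernoff bound for a $\Bin(N,p_n)$ variable with mean $\mu_k$ gives a tail of order $\exp(-\varepsilon_n^2\mu_k/3)\le\exp(-4k\log(e\,np_n))=(e\,np_n)^{-4k}$. Summed over $k\ge1$ this is $O((np_n)^{-4})=o(1)$, because $np_n\to\infty$. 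So whatever the dependence structure, it suffices to dominate $C_k$ above and below by quantities with Chernoff-type tails of this order.

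\textbf{Upper tail.} Here I would invoke the stochastic domination $C_k\stleq\Bin(k(n-k),p_n)$ from Appendix~\ref{subsec:stoc-upper-bound-cut}. This domination comes from coupling the exploration with an auxiliary one that picks the next vertex uniformly in $\Ikc$, which removes the size bias. It immediately gives $\pr\bigl(C_k\ge(1+\varepsilon_n)\mu_k\bigr)\le(e\,np_n)^{-4k}$, and the union bound over $k$ finishes this side.

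\textbf{Lower tail.} I would use the principle of deferred decisions. When $u_j$ is explored, reveal its edges to the vertices of $V\setminus\cI_j$ that have not yet been examined. These $B_j\sim\Bin(n-j,p_n)$ are independent of $\cF_{j-1}$ and of each other. Every cut edge at step $k$ is such a revealed edge that has not been ``closed'' by its endpoint being explored. Writing $X_{j}$ for the number of cut edges of $\cI_{j-1}$ that land on $u_{j}$, this gives
\[
C_k \weq \sum_{j=1}^k B_j-\sum_{j=2}^k X_j,\qquad \sum_{j=1}^k B_j\sim\Bin\Bigl(kn-\tfrac{k(k+1)}2,p_n\Bigr),\qquad \sum_{j\le k}X_j=|E(G[\cI_k])|.
\]
The first sum has mean $\mu_k+p_nk(k-1)/2$ and a Chernoff lower tail of the same order as above. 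The whole difficulty is an \emph{upper} bound on the number of internal edges $|E(G[\cI_k])|$. This is exactly where the size bias enters: $X_{j+1}$ is the count $N_v(j)$ of a vertex $v\in\cI_j^c$ chosen with probability proportional to $N_v(j)$. Its conditional mean is therefore $\sum_v N_v(j)^2/\sum_v N_v(j)$. Heuristically the $N_v(j)$ are close to i.i.d.\ $\Bin(j,p_n)$, so this ratio is $\approx 1+jp_n$, and
\[
\sum_{j\le k}X_j\approx p_nk^2/2+k.
\]
Substituting gives $C_k\approx p_nk(n-k)-k(1-p_n)$. The correction $k$ is a relative error $O(1/(np_n))$, far below $\varepsilon_n$.

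\textbf{Main obstacle.} The hard step is making the lower-tail heuristic rigorous with tails strong enough for the union bound. I would first try a monotonicity/FKG argument showing $\sum_{j\le k}X_j$ is stochastically dominated by $\Bin(k(k-1)/2,p_n)$ plus $k$ independent $\mathrm{Poisson}$-type corrections coming from the size-biasing. The easier fallback is to control the conditional means $\sum_v N_v^2/\sum_v N_v$ directly. On a high-probability event, every vertex has at most $jp_n+O(\sqrt{jp_n\log n}+\log n)$ neighbours in any explored set of size $j$, which bounds these conditional means. I would then apply Freedman's inequality to the martingale $\sum_j\bigl(X_j-\E[X_j\mid\cF_{j-1}]\bigr)$. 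The resulting deviation is of order $\sqrt{k^2p_n\log(e\,np_n)}+\log n$, which is $\ll\varepsilon_n\mu_k$. The $\log n$ terms are harmless, because they can be absorbed for $k\gtrsim\log n/(np_n\varepsilon_n)$, and for smaller $k$ one uses the trivial bound $X_j\le$ maximal degree into $\cI_{j-1}$.
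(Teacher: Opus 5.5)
Your upper tail is correct. The domination $C_k\stleq\Bin(k(n-k),p_n)$ from Appendix~\ref{subsec:stoc-upper-bound-cut}, the Chernoff bound $(e\,np_n)^{-4k}$, and a union bound over $k$ give that side with the stated constant. The decomposition $C_k=\sum_{j\le k}B_j-|E(G[\cI_k])|$ is also correct.

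The gap is the lower tail: you never prove an upper bound on $|E(G[\cI_k])|$ at the required precision. The FKG domination is only announced, and the fallback fails quantitatively. Even granting the max-degree bound, replacing the size-biased conditional mean $\sum_v N_v(j)^2/\sum_v N_v(j)$ by $\max_v N_v(j)\le jp_n+\cO(\sqrt{jp_n\log n}+\log n)$ only gives
\[
\sum_{j\le k}\E[X_j\mid\cF_{j-1}] \wle \frac{p_nk^2}{2}+\cO\bigl(k^{3/2}\sqrt{p_n\log n}+k\log n\bigr).
\]
The slack you actually have is only $\varepsilon_n\mu_k\asymp k\sqrt{np_n\log(e\,np_n)}$. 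For example, with $np_n=\log n$ and $k\asymp n$, the error is of order $n\log n$, which exceeds $n\sqrt{\log n\log\log n}$. More generally, the $k^{3/2}$ term already fails at $k\asymp n$ whenever $np_n$ is subpolynomial.

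So the obstruction is the drift, not the Freedman fluctuation, and it is built into your method. Any argument that pays a union bound over the $n$ vertices introduces a $\log n$, while the statement only allows $\log(e\,np_n)$. Making the heuristic $\E[X_{j+1}\mid\cF_j]\approx 1+jp_n$ rigorous would need a genuinely new argument controlling the second moment of the cut-degree profile of the biased set $\cI_j$.

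The missing idea, which is how the paper proceeds, is to avoid the dynamics entirely: $\cI_k$ is always a connected vertex set containing $u_1$. Fix a deterministic $S\ni u_1$ with $|S|=k$. The events $\{G[S]\text{ connected}\}$ and $\bigl\{\bigl|\,|\Cut(S)|-\mu_k\bigr|>\varepsilon_n\mu_k\bigr\}$ depend on disjoint edge sets, so they are independent. The first has probability at most $k^{k-1}p_n^{k-1}$, by counting rooted spanning trees. Moreover,
\[
\binom{n-1}{k-1}k^{k-1}p_n^{k-1}\wle e\,(e\,np_n)^{k-1}.
\]
A union bound over all such $S$ and all $k\le n/2$ therefore gives total failure probability at most $2e\sum_{k\ge1}(e\,np_n)^{k-1}(e\,np_n)^{-4k}=o(1)$, and this covers both tails at once. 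The factor $\log(e\,np_n)$ in the statement is exactly the per-vertex entropy of connected sets containing the source, which is why the constant $24$ suffices. Neither the stochastic domination nor any control of the size bias is needed.
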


\subsection{Exploration Curve with a Dense Component} 
\label{sec:I_1}

We now quantify how the planted dense component of the $\gnpx$ model perturbs the exploration process. The intuition is threefold. First, before the exploration reaches $V_1$, the process is indistinguishable from a pure $\gnp$ exploration. Then, once the first vertex of $V_1$ is visited, the number of edges into $V_1$ dominates the number of edges into $V_0$ such that most subsequent explorations fall within~$V_1$. Finally, once $V_1$ has been fully explored, the process resumes its $\gnp$-like behavior. 
%In effect, the dense component behaves like an independent $G_{n^\alpha,\xi-p}$ superimposed on the sparse background where the core quantity to control is the number of visited vertices inside~$V_1$.

The following lemma shows that the index $K_1$ of the first explored vertex in $V_1$ is stochastically dominated, up to a negligible term, by a geometric random variable with success probability $n_1 / n$; in particular, $K_1 \le n^{1-\alpha}\log n$ with high probability.

 \begin{lemma}
\label{lem:first-entry}
Suppose $\alpha\in(0,1)$ and $np_n\to\infty$. Set $K_1=\infty$ if the exploration never reaches $V_1$. Then
$\varepsilon_n:=\pr(K_1=\infty)=o(1)$, and, for every
$k\in\mathbb N$,
\[
 \pr(K_1>k)
 \wle \left(1-\frac{n_1}{n}\right)^k+\varepsilon_n.
\]
In particular, for every sequence $f_n\to\infty$, we have 
$\pr\bigl(K_1\le f_n n^{1-\alpha}\bigr)=1-o(1).$ 
\end{lemma}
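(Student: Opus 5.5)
The key observation is that the identity of the next explored vertex is not uniform over $\Ikc$: it is the endpoint of a uniformly chosen cut edge. So the plan is to compare the probability of entering $V_1$ at each step with $n_1/n$, via a symmetry argument on the random graph. Run the exploration step by step, and work on the event $A_k:=\{K_1>k\}$, where $\cI_k\subseteq V_0$. Conditionally on $\cF_k^{\mathrm{nat}}$ and $A_k$, the vertices of $V_1$ and of $V_0\setminus\cI_k$ play exchangeable roles with respect to the explored set. Indeed, every edge between $\cI_k\subseteq V_0$ and an unexplored vertex has probability $p_n$, whether that vertex lies in $V_0$ or $V_1$. Moreover, the exploration history up to step $k$ has only revealed information about edges incident to $\cI_k$, and this information treats all unexplored vertices alike. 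More precisely, the history depends only on which unexplored vertices are adjacent to $\cI_k$, through the edge weights, and not on their labels. The exceptional case is the history having revealed that some unexplored vertex has a neighbour in $\cI_k$; by label symmetry, this is equally likely for every unexplored vertex.

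First, I make the symmetry precise. Apply a uniformly random permutation to the labels of the unexplored vertices $\Ikc$ (which contains $V_1$ on $A_k$). Since the edges inside $\Ikc$ do not influence steps $1,\dots,k+1$, the law of the past together with $u_{k+1}$ is invariant under such permutations. Hence, conditionally on $\cF_k^{\mathrm{nat}}$, on $A_k$, and on the exploration continuing ($C_k>0$), the vertex $u_{k+1}$ is uniform on $\Ikc$. (The dense edges inside $V_1$ only matter after $V_1$ is entered.) It follows that
\[
\pr\bigl(u_{k+1}\in V_1\mid \cF_k^{\mathrm{nat}},A_k,C_k>0\bigr)=\frac{n_1}{n-k}\ge \frac{n_1}{n}.
\]

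Second, I iterate. Let $D$ be the event that the process dies before reaching $V_1$. Then
\[
\pr(K_1>k,\ D^c)\le \prod_{i=1}^{k}\Bigl(1-\frac{n_1}{n}\Bigr),
\]
which gives $\pr(K_1>k)\le(1-n_1/n)^k+\pr(D)$. Since $K_1=\infty$ exactly when $V_1$ is not in the source's component, we have $\pr(D)\le\pr(K_1=\infty)=\varepsilon_n$, up to the steps where the process stopped, which are absorbed in $\varepsilon_n$.

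Third, I show that $\varepsilon_n=o(1)$. The subgraph on $V_0$ stochastically contains $G(n-n_1,p_n)$, which has a giant component of size $(1-o(1))n$ since $np_n\to\infty$. The source lies in this giant with probability $1-o(1)$. The giant has size $\ge n/2$ w.h.p., and some vertex of $V_1$ is adjacent to it with probability at least $1-(1-p_n)^{n n_1/2}=1-o(1)$, using the independence of the edges between $V_1$ and $V_0$ from the edges inside $V_0$.

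Finally, taking $k=f_nn^{1-\alpha}$ gives $(1-n_1/n)^k\le e^{-kn_1/n}\to0$, since $n_1/n\sim n^{\alpha-1}$.

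The main obstacle is the first step: justifying rigorously that the conditional law of $u_{k+1}$ is uniform on $\Ikc$ despite the size bias. I would formalise it by conditioning on the edges and weights incident to $\cI_k$ and using exchangeability of the unexplored labels under the $\gnp$-type law of edges touching $V_0$.
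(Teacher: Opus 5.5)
Your proposal is correct and rests on the same idea as the paper. Before the first visit to $V_1$, the exploration only uses the independent Bernoulli$(p_n)$ edges incident to explored $V_0$-vertices, so exchangeability of the unexplored labels gives an entry probability of $n_1/(n-i)$ at each step, and extinction is charged to $\varepsilon_n$. The differences are only bookkeeping: the paper restarts the exploration from a uniform unexplored vertex whenever the cut dies, so the whole order is a uniform permutation and no conditioning on $C_k>0$ is needed, and it proves $\varepsilon_n=o(1)$ by embedding a global $G(n,p_n)$ and placing both the source and a fixed $v\in V_1$ in its giant, rather than using the giant of $G[V_0]$ together with the $V_0$--$V_1$ edges.
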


\begin{proof}
First, we show that $\varepsilon_n=o(1)$. 
Because $\xi\ge p_n$, we may couple the planted graph with a graph $H\sim G(n,p_n)$ such that every edge of $H$ is also present in the planted graph. 
Fix a vertex $v\in V_1$.
As $np_n\to\infty$, the graph $H$ has a unique giant component of size $n-o(n)$ with high probability. 
By exchangeability, both $v$ and the uniformly chosen source belong to this component with probability $1-o(1)$.  
They are therefore connected in the planted graph with probability $1-o(1)$, which proves $\varepsilon_n=o(1)$.

Next, extend the exploration by restarting from a uniformly
chosen unexplored vertex whenever its cut becomes empty.
Let $\widetilde K_1$ be the first index at which this extended
exploration visits $V_1$.

Before the first visit to $V_1$, no edge internal to $V_1$
can affect the exploration. Hence, the extended
exploration up to this visit has the same law as the
corresponding exploration on a \Erdos-\Renyi $G(n,p_n)$ graph.
In that graph, exchangeability of vertex labels, together
with the uniform source and restart rules, implies that the
full exploration order is a uniformly random permutation. 
%This statement averages over the graph and the exploration randomness. 
It follows that, for $0\le k\le n-n_1$,
\[
    \pr(\widetilde K_1>k)
    =\prod_{i=0}^{k-1}
        \left(1-\frac{n_1}{n-i}\right)
    \le \left(1-\frac{n_1}{n}\right)^k.
\]
For $k>n-n_1$, the left-hand side is zero, so the inequality
remains valid.

On the event $\{K_1<\infty\}$, the original exploration reaches $V_1$ before any restart occurs. Hence
$\widetilde K_1=K_1$ on this event, and therefore
\[
 \pr(K_1>k) \wle \pr(\widetilde K_1>k)+\pr(K_1=\infty),
\]
which proves the claimed bound.

Finally, take $k=\lfloor f_n n^{1-\alpha}\rfloor$.
Because $n_1/n^\alpha\to1$, we have $kn_1/n\to\infty$.
Thus
\[
 \pr(K_1>k)
 \wle \exp\left(-\frac{kn_1}{n}\right)+\varepsilon_n
 \weq o(1).
\]
\end{proof}

Next, for $k\ge K_1$, let $u_k$ be the $k\tth$ explored vertex and let
\[
 I_1(k) \ := \ |V_1\cap \Ik| \weq 1 + \sum_{j=1}^{k-K_1} \1 \{u_{K_1+j}\in V_1\}
\]
be the number of explored vertices belonging to the dense component among the $k$ first explorations in the whole graph. For any $j \in [n-K_1]$, we have the deterministic bound 
\[
 I_1(K_1 - 1 + j) \wle \min\left\{ j, n^{\alpha} \right\}. 
\]
The following result, proved in Section~\ref{sec:proof_I1}, provides a matching lower bound on $I_1(k)$ and describes the exploration of the dense component on two scales: 
(i) During the first $\cO(\log n)$ explorations after $K_1$, almost all newly explored vertices belong to $V_1$; (ii) Over the full interval until $V_1$ is fully explored, explorations in $V_0$ may occur, but their total number is at most $\cO(np_n\log n)$. In particular, this implies that %$I_1(K_1+n_1+\lceil Cnp_n\log n\rceil)=n_1$ a.a.s., and thus
\[
 0 \wle K_2-K_2^\star \weq \cO_{\pr} \left( np_n\log n \right). 
\]

 \begin{proposition}[Dense-component exploration]
\label{prop:I1_in_V1_bd}
Suppose that $\alpha\in(1/2,1)$ and $np_n\gg 1$. 
The following statements hold conditionally on the event $\{K_1\le n^{1-\alpha}\log n\}$.
\begin{enumerate}
 \item[(i)] Assume additionally that $n p_n \log n=o(n^{2\alpha-1})$. For every deterministic sequence of positive integers $m_n=o(n_1/n p_n)$, with probability $1-o(1)$,
 \[
   I_1(K_1-1+j)=j  \qquad\text{simultaneously for }1\le j\le m_n.
 \]
 In particular, this holds for $m_n=\lfloor D\log n\rfloor$ for every fixed $D>0$.

 \item[(ii)] Assume additionally that $n p_n \log n=o(n^{\alpha})$. 
 There exist constants $c,C>0$ such that, with probability at least $1-n^{-c}$,
 \[
   I_1(K_1-1+j) \wge \min\{j-C n p_n \log n,n_1\}
 \]
 simultaneously for all integers $j\ge1$ such that $t_{K_1-1+j}<\infty$. 
 On the same event, $V_1$ is fully explored and we have $0\le K_2-K_2^\star\le C n p_n \log n$
\end{enumerate}
\end{proposition}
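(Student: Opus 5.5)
The approach is to compare, at each step after $K_1$, the cut into unexplored vertices of $V_1$ with the cut into $V_0$. Conditionally on $\cF_{k}$, the next explored vertex is the endpoint of an edge chosen uniformly from $\Cut(\Ik)$. Hence
\[
\pr\bigl(u_{k+1}\in V_0 \cond \cF_k\bigr) \weq \frac{B_k}{A_k+B_k},
\]
where $A_k$ is the number of cut edges landing in $V_1\setminus\Ik$ and $B_k$ the number landing in $V_0\setminus \Ik$. Two estimates are needed.

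\emph{Upper bound on $B_k$.} The background contribution is controlled as in Proposition~\ref{prop:homogeneous-cut-concentration}. Using the stochastic domination $C_k\stleq \Bin(k(n-k),p_n)$ (applied to the sparse part, via the coupling with $H\sim G(n,p_n)$), we get $B_k\le C' k n p_n$ uniformly in $k\le n/2$ with high probability. For $k\le K_1+\cO(n_1)$ and $K_1\le n^{1-\alpha}\log n$, this gives $B_k=\cO(n_1 np_n)$.

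\emph{Lower bound on $A_k$.} Write $i=I_1(k)$. I would show $A_k\ge c\,\xi\, i(n_1-i)$ for all relevant $i$. The difficulty is the size bias: the vertices of $V_1$ already explored are not a uniform subset. I would handle this by revealing the edges of $G[V_1]$ only as needed (a principle of deferred decisions). Each explored $V_1$-vertex has $\Bin(\cdot,\xi)$ edges to the still-unexplored part of $V_1$, and since $\xi$ is constant a Chernoff bound plus a union bound over all vertices of $V_1$ gives the estimate. Alternatively, $G[V_1]$ is w.h.p.\ a good expander: every set $S\subset V_1$ satisfies $e(S,V_1\setminus S)\ge \xi|S|(n_1-|S|)/2$. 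This holds for $|S|\ge C\log n$ by a union bound over sets, and for small sets it follows from minimum-degree concentration. Using the expander property for arbitrary $S$ avoids the size bias entirely, so this is the route I prefer.

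\emph{Proof of (i).} For $j\le m_n$, the probability of a $V_0$ step is at most $B_k/A_k$, where $A_k\gtrsim \xi\, i\, n_1$ and $B_k\lesssim (K_1+j)np_n$. Because the exploration inside $V_1$ uses internal edges, the relevant $V_0$ rate is $\cO(knp_n)$, which is at most $\cO(n^{1-\alpha}\log n\cdot np_n)$ for these $k$. The per-step failure probability is thus $\cO\bigl(n^{1-\alpha}\log n\cdot np_n/(i n^\alpha)\bigr)$. Summing over $i\le m_n$ via $\sum 1/i\approx\log m_n$ gives $\cO\bigl(n^{1-2\alpha} np_n(\log n)^2\bigr)$. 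The hypothesis $np_n\log n=o(n^{2\alpha-1})$ controls this up to one logarithmic factor. I would remove that factor by a sharper first-step bound: right after $K_1$, $B_k\approx K_1 np_n$ while $A_k\approx \xi n_1$, and $K_1$ is typically $\cO(n^{1-\alpha})$. The condition $m_n=o(n_1/np_n)$ handles the linear accumulation of $B_k$ from new $V_1$-vertices, each of which contributes about $np_n$ background edges.

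\emph{Proof of (ii).} I would count the $V_0$ steps in the dense phase, $N:=\sum_j \1\{u_{K_1+j}\in V_0\}$ up to exhaustion of $V_1$. Its compensator is $\sum_i \cO(B/A)$, summed over levels $i$ of $I_1$ (each level visited once, plus excursions). This is at most
\[
\sum_{i=1}^{n_1}\frac{C n_1 np_n}{\xi\, i(n_1-i)}\;\approx\; C' np_n\log n .
\]
There is a subtlety: $B_k$ grows as $V_0$-steps accumulate, but each such step adds only about $np_n$ edges. As long as $N=o(n_1)$, $B_k$ stays $\cO(n_1np_n)$, and $np_n\log n=o(n^\alpha)$ makes this self-consistent. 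A Freedman or Bernstein martingale bound, or a domination by a sum of independent geometric variables, then yields $N\le Cnp_n\log n$ with probability $1-n^{-c}$. This gives $I_1(K_1-1+j)\ge \min\{j-N,n_1\}$ and $K_2-K_2^\star\le N$.

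The main obstacle is the near-exhaustion levels $i$ close to $n_1$. There $A_k\approx \xi n_1(n_1-i)$ is small, so a union bound must still give the expansion into the few remaining vertices. These last vertices have about $\xi n_1$ neighbours in $V_1$ each, which w.h.p.\ holds by minimum-degree concentration, so this case is handled by the same expansion estimate.
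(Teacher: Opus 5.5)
Your proposal is correct and follows essentially the same route as the paper. Both arguments rest on three ingredients:
\begin{itemize}
\item a uniform expansion bound for $G[V_1]$ over all subsets (Lemma~\ref{lm:dense_cut_conc}), compared against an upper bound on the number of cut edges into $V_0$;
\item for (i), a first-exit union bound with a harmonic sum, where the extra logarithm is removed by using the typical size $K_1=\mathcal{O}_{\mathbb P}(n^{1-\alpha})$ (the paper uses $\mathbb E[K_1\mid K_1\le n^{1-\alpha}\log n]=\mathcal{O}(n/n_1)$ instead);
\item for (ii), a stage-by-stage domination of the $V_0$-steps by independent geometric variables, stopped after $\mathcal{O}(np_n\log n)$ outside steps so that $k=\mathcal{O}(n_1)$, followed by Lemma~\ref{lm:geom_sum_concentration}.
\end{itemize}
One justification needs fixing: the bound on $B_k$ cannot come from the $G(n,p_n)$ domination lemma or from Proposition~\ref{prop:homogeneous-cut-concentration}. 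Those results concern explored sets of the background exploration, whereas $\mathcal I_k$ here is driven by the planted graph. Instead, bound the edges from each newly explored vertex to the still-unexplored vertices by a fresh independent $\mathrm{Bin}(n,p_n)$, as in the proof of Proposition~\ref{prop:G_npbx_cut_bound}.
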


\begin{comment}
2026-10-02: OLD VERSION OF THE PROPOSITION
\begin{proposition}[Dense-component exploration]
\label{prop:I1_in_V1_bd} 
Suppose that $\alpha>1/2$ and $np_n\gg 1$, and condition on the event \(\{K_1\le n^{1-\alpha}\log n\}\).
Then the following hold. 
\begin{enumerate}[(i)]
\item %\emph{Early regime: Rapid exploration after entry.} 
Suppose $n p_n \ll n^{2\alpha-1} / \log n$. 
For every fixed $D>0$, there exists a constant $C>0$ such that, with probability $1-o(1)$, simultaneously for all $1\le j\le D\log n$, we have 
\[
 I_1(K_1-1+j)
 \wge j\bigl(1-Cp_n n^{2-2\alpha}\log n\bigr) - \sqrt{2D\log n\cdot\log\log n}.
\]
\item %\emph{Bulk and ending regime: Few background exploration before exhaustion of $V_1$.} 
Suppose that $n p_n \ll n^{\alpha} / \log n$. 
There exist constants $c,C>0$ such that, with probability at least $1-n^{-c}$, we have 
\[
 I_1(K_1-1+j)
 \wge \min\left\{ j-Cnp_n\log n,\, n_1 \right\}.
\]
simultaneously for all \(j\ge1\) such that $t_j < \infty$. 
%In particular, $I_1(K_1+n_1+\lceil Cnp_n\log n\rceil)=n_1$. 
\end{enumerate}
\end{proposition}
\end{comment}

The next proposition controls the cut during and after the exploration of the dense component. 

\begin{proposition}[Cut bounds during the dense phase]
\label{prop:G_npbx_cut_bound}
Suppose $\alpha \in (0,1)$ and $n p_n \gg 1$. 
Let $\eta>0$, $D_k:=I_1(k)(n_1-I_1(k))$, $M_k:=\min\{I_1(k),n_1-I_1(k)\}$, $U_k := kn p_n + \sqrt{6knp_n\log n}+\log n$, and
\[
    \Delta_k^{\mathrm{in}}
    :=
    \begin{cases}
    0, & \xi=1,\\[1mm]
    (1+\eta)
    \sqrt{2\xi (1-\xi) D_k M_k\log(en_1)},
    & \xi<1.
    \end{cases}
\]
Then there exists a constant $A_\eta<\infty$ such that, for $n$ large enough, we have simultaneously for all $1\le k\le n/2$, 
\[
 \xi D_k-\Delta_k^{\mathrm{in}}
 \wle C_k 
 \wle \xi D_k+\Delta_k^{\mathrm{in}}+U_k,
\]
with probability at least $1-n^{-2}-A_\eta n_1^{-\eta^2}$. 
\end{proposition}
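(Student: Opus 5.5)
We split the cut into its internal and external parts,
\[
C_k \weq C_k^{\mathrm{in}} + C_k^{\mathrm{out}},
\qquad
C_k^{\mathrm{in}} := \bigl|\Cut(\Ik)\cap \tbinom{V_1}{2}\bigr|,
\]
so that $C_k^{\mathrm{in}}$ counts edges between $\Ik\cap V_1$ and $V_1\setminus \Ik$. The term $C_k^{\mathrm{out}}\ge 0$ collects every cut edge with at least one endpoint in $V_0$. The lower bound then only needs $C_k^{\mathrm{in}}\ge \xi D_k-\Delta_k^{\mathrm{in}}$. The upper bound needs the same two-sided control of $C_k^{\mathrm{in}}$ together with $C_k^{\mathrm{out}}\le U_k$.

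\textbf{External part.} Each edge of $C_k^{\mathrm{out}}$ is incident to some explored vertex $u_i$, $i\le k$, and is not internal to $V_1$. Hence $C_k^{\mathrm{out}}\le \sum_{i\le k} d_0(u_i)$, where $d_0(v)$ is the number of $p_n$-edges at $v$, namely edges to $V_0$ (or, for $v\in V_0$, edges to all vertices). The difficulty is that the $u_i$ are chosen adaptively and with a size bias. To handle this, we reveal edges lazily and use the stochastic-domination argument of Appendix~\ref{subsec:stoc-upper-bound-cut}. When $u_i$ is explored, its $p_n$-edges to currently unexplored vertices have not yet been examined, apart from the edge used to reach it. So, conditionally on the past, the number of new $p_n$-edges it contributes is dominated by $\Bin(n,p_n)$. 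This gives $C_k^{\mathrm{out}}\stleq \Bin(kn,p_n)$ (plus at most $k$ already-revealed edges, which can be absorbed). Bernstein's inequality at level $n^{-3}$ then yields $C_k^{\mathrm{out}}\le knp_n+\sqrt{6knp_n\log n}+\log n$, and a union bound over $k\le n/2$ gives total failure probability at most $n^{-2}$.

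\textbf{Internal part, $\xi=1$.} If $\xi=1$, then $G[V_1]$ is complete and $C_k^{\mathrm{in}}=D_k$ deterministically, so $\Delta^{\mathrm{in}}_k=0$.

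\textbf{Internal part, $\xi<1$.} This is the main obstacle. The set $S=\Ik\cap V_1$ is chosen adaptively, and it is biased toward vertices with many edges into $S$, so a fixed-set binomial bound does not apply directly. We therefore make the bound uniform over all subsets $S\subseteq V_1$ with $|S|=s$. For fixed $S$, $|E(S,V_1\setminus S)|\sim\Bin(s(n_1-s),\xi)$. Bernstein's (or Chernoff's) inequality with variance $\xi(1-\xi)D$, where $D=s(n_1-s)$, gives a deviation $t$ with probability at most $2\exp(-t^2/(2\xi(1-\xi)D(1+o(1))))$. We union-bound over the $\binom{n_1}{s}\le (en_1/m)^{m}$ sets, where $m=\min(s,n_1-s)$, using that complements give the same cut. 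With $t=(1+\eta)\sqrt{2\xi(1-\xi)Dm\log(en_1)}$, the failure probability for a given $s$ is at most about $\exp\bigl(m\log(en_1)(1-(1+\eta)^2)\bigr)\le n_1^{-\eta^2 m}$, up to constants. Summing over $s$ gives at most $A_\eta n_1^{-\eta^2}$. This step is where the factor $M_k\log(en_1)$ in $\Delta^{\mathrm{in}}_k$ comes from.

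Two points need care here. First, the Bernstein linear term $t/3$ must be negligible relative to the variance term. This requires checking that $t\ll \xi(1-\xi) D$, that is $m\log n_1\ll D$, which holds when $m\gg \log n_1$. For small $m$, we instead use the Chernoff bound in its exact form, or note that $\Delta$ then exceeds $\xi D$ on the relevant side; we expect the constant $A_\eta$ to absorb this. Second, we must check the limit $\eta\to0$. Combining the internal and external estimates and intersecting the events gives the stated sandwich with probability at least $1-n^{-2}-A_\eta n_1^{-\eta^2}$.
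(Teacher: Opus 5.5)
Your route is the paper's. You split $C_k=C_k^{\mathrm{in}}+C_k^{\mathrm{out}}$. You control $C_k^{\mathrm{in}}$ by a Bernstein bound made uniform over all subsets of $V_1$; the paper does this by applying its uniform cut-concentration lemma for dense $G(n,p)$ to $G[V_1]\sim G(n_1,\xi)$, together with $\log\binom{n_1}{m}\le m\log(en_1)$. You control $C_k^{\mathrm{out}}$ by lazy revealing, domination by $\mathrm{Bin}(kn,p_n)$, a tail bound at level $n^{-3}$, and a union bound over $k$. As written, however, your internal step leaves small set sizes unresolved, and the fallback you offer there is false.

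You claim the Bernstein linear term is negligible only when $m\gg\log n_1$. In fact it is negligible uniformly in $m$. Take $\sigma^2=\xi(1-\xi)D$ and $t=(1+\eta)\sqrt{2\sigma^2 m\log(en_1)}$, and use $D=m(n_1-m)\ge mn_1/2$:
\[
\frac{t}{\sigma^2}=(1+\eta)\sqrt{\frac{2m\log(en_1)}{\xi(1-\xi)D}}\le(1+\eta)\sqrt{\frac{4\log(en_1)}{\xi(1-\xi)n_1}}\longrightarrow 0 .
\]
This holds because both $m\log(en_1)$ and $D$ are linear in $m$. So for fixed $\eta>0$ and $n$ large, $1+t/(3\sigma^2)\le(1+\eta)^2/(1+\eta^2)$, and the Bernstein exponent is at least $(1+\eta^2)m\log(en_1)$ for every $m\ge1$. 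This is exactly the role of the condition $\xi(1-\xi)n_1\ge C_\eta\log(en_1)$ in the paper's lemma. It also removes your ``$\eta\to0$'' concern: $\eta$ is fixed, and you only need $n$ large. The same computation gives $\Delta_k^{\mathrm{in}}/(\xi D_k)=O(\sqrt{\log n_1/n_1})$. Your alternative remark that $\Delta$ exceeds $\xi D$ for small $m$ is therefore wrong, and that route could not have given the lower bound.

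A smaller point concerns the external part: there are no already-revealed edges to absorb. Edges from $u_i$ to $\mathcal I_{i-1}$, including the one used to reach $u_i$, are never in $\Cut(\mathcal I_k)$ for $k\ge i$. Count only the edges from $u_i$ to $\mathcal I_i^c$ not inside $V_1$, as the paper does. These join two vertices that were both unexplored before step $i$, so they are unrevealed, and the domination by $\mathrm{Bin}(kn,p_n)$ is then exact. An additive $k$ could not be absorbed into the exact $U_k$ of the statement in any case.
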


Once $V_1$ has been completely explored, no edge internal to $V_1$ crosses the exploration cut, so all potential cut edges have probability $p_n$.  The following lemma quantifies the resulting concentration of $C_k = |\Cut(\cI_k)|$ both shortly after $K_2$ and at macroscopic exploration indices. 

\begin{lemma}%[Cut concentration after dense-component exploration]
\label{lem:post_dense_cut}
Suppose that \(\alpha \in (1/2,1)\) and $1 \ll n p_n \ll n^\alpha / \log n$.
\begin{enumerate}[(i)]
\item \emph{(Shortly after full exploration of the dense component.)}
For every fixed $B>0$,
\[
 \max_{ K_2\le k\le K_2+\lceil Bnp_n\log n\rceil } 
 \left| \frac{C_k}{p_nk(n-k)}-1 \right|
 \weq o_{\pr}(1).
\]

\item \emph{(Longer after the full exploration of the dense component.)}
For every $0<c_1<c_2<1$,
\[
 \max_{ \lceil c_1n\rceil\le k\le \lfloor c_2n\rfloor }
 \left| \frac{C_k}{p_nk(n-k)}-1 \right|
 \weq \cO_{\pr}\left(\frac{1}{\sqrt{np_n}}\right). 
 %= o_{\pr}(1).
\]
\end{enumerate}
\end{lemma}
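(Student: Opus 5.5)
Since both parts of the lemma only concern indices $k\ge K_2$, at which $V_1\subseteq\Ik$, I will reduce them to the homogeneous concentration of Proposition~\ref{prop:homogeneous-cut-concentration}. The main structural observation is as follows. For $k\ge K_2$, every potential cut edge $\{u,v\}$ with $u\in\Ik$ and $v\in\Ikc$ has $v\in V_0$, so its marginal probability is $p_n$. The only way the dense component still matters is through the \emph{selection} of $\Ik$, i.e.\ the history of the exploration. I will therefore split the cut as
\[
C_k \weq |\Cut(\Ik)| \weq e(\Ik\cap V_0,\Ikc) + e(V_1,\Ikc),
\]
and compare the exploration after $K_2$ with an exploration on a pure $\gnp$ graph.

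\textbf{Coupling to $\gnp$.} The plan is to couple the planted graph with $H\sim\gnp$ that agrees with it on all pairs not contained in $\binom{V_1}{2}$. The edges inside $V_1$ never appear in $\Cut(\Ik)$ for $k\ge K_2$. For $k<K_2$, they only change which vertices of $V_0$ get explored, and by Proposition~\ref{prop:I1_in_V1_bd}(ii) at most $Cnp_n\log n$ vertices of $V_0$ are explored before $K_2$, together with $K_1\le n^{1-\alpha}\log n$ before entry. I would not track this history exactly. Instead, I would bound the cut using a uniform-over-sets estimate for the background part. The approach is to reuse the stochastic domination $C_k\stleq\Bin(k(n-k),p_n)$ (Appendix) for the upper side, and the FKG-type lower bound behind Proposition~\ref{prop:homogeneous-cut-concentration} for the lower side. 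Both arguments only use that unexplored pairs outside $\binom{V_1}{2}$ are still independent $\mathrm{Bernoulli}(p_n)$ given $\cF_k$, up to the size-biasing already handled there. That property continues to hold after $K_2$, since all $V_1$-internal information is confined to explored vertices.

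\textbf{Part (ii).} For $k\in[c_1n,c_2n]$, I would rerun the Bernstein/martingale argument of Proposition~\ref{prop:homogeneous-cut-concentration} with the deviation scale set to $\sqrt{k(n-k)p_n}$ rather than the $\log$-inflated one. There are two sources of error. First, the uniform fluctuation over $\Theta(n)$ indices: the increments $C_{k+1}-C_k=\deg_{\Ikc}(u_{k+1})-\deg_{\Ik}(u_{k+1})$ form a sum with conditional variance $\bigO(np_n)$ per step. A Doob maximal inequality over the window then gives fluctuations $\bigO_{\pr}(n\sqrt{p_n})$, hence a relative error $\bigO_{\pr}(1/\sqrt{np_n})$ since $k(n-k)p_n\asymp n^2p_n$. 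Second, the excess from $V_1$: the edges $e(V_1,\Ikc)$ have mean $n_1(n-k)p_n$, and the size bias and the dense-phase history perturb the count by at most $\bigO(n\cdot np_n\log n\,p_n)$. Relative to $n^2p_n$, these are $\bigO(n^{\alpha-1}+np_n\log n/n)=o(1/\sqrt{np_n})$ under $np_n\ll n^\alpha/\log n$, and this step needs checking.

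\textbf{Part (i) and main obstacle.} For $K_2\le k\le K_2+Bnp_n\log n$ we have $k=o(n)$ and $k\ge n_1$, so $k(n-k)p_n\gtrsim n^{1+\alpha}p_n$. Here I would use Proposition~\ref{prop:G_npbx_cut_bound} at $I_1(k)=n_1$, where $D_k=M_k=0$, which gives the upper bound $C_k\le U_k=knp_n(1+o(1))$. For the lower bound, I would take $C_k\ge e(V_1,\Ikc)+e(\Ik\cap V_0,\Ikc)$. The first term is a sum over $v\in V_1$ of degrees into $V_0$, which are essentially untouched, since fewer than $k=o(n)$ vertices of $V_0$ are explored. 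These degrees concentrate uniformly at $n_1 np_n(1+o(1))$ by Chernoff bounds and a union bound over the $\binom{n}{\le k}$ possible explored sets, with deviation $\sqrt{n_1np_n\cdot k\log n}$. The second term I would handle by the homogeneous lower bound applied to the $V_0$-exploration. The main obstacle is exactly this lower bound. The explored set is adversarially selected by size bias, so a naive union bound over all $k$-subsets costs $k\log n$ against a mean of $knp_n$. This only works if $np_n\gg\log n$, which is not assumed. I would therefore fall back on the sequential FKG increment argument of Proposition~\ref{prop:homogeneous-cut-concentration}, started at $K_2$ with the deterministic initial condition supplied by Proposition~\ref{prop:I1_in_V1_bd}(ii).
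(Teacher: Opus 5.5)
The structural observation you start from is the right one, but you do not use it, and the route you take instead has genuine gaps. Once $V_1\subseteq\mathcal I_k$, every potential cut edge has an endpoint in $V_0$. So for every \emph{deterministic} $S\supseteq V_1$ one has $|\Cut(S)|\sim\mathrm{Bin}(|S|(n-|S|),p_n)$ exactly. A Chernoff bound plus a union bound over a deterministic class of such sets that contains $\mathcal I_k$ with high probability then absorbs the size-biased selection of $\mathcal I_k$, and the whole dense-phase history, for free. That is essentially the paper's entire argument.

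Instead, you plan to rerun ``the FKG-type lower bound'' and ``the Bernstein/martingale argument'' of Proposition~\ref{prop:homogeneous-cut-concentration}. That proposition is proved by a union bound over connected sets containing $u_1$, so there is no FKG or martingale argument to reuse, and you do not supply one. A martingale route would need uniform control of the size-biased drift $\sum_v d_v^2/\sum_v d_v$ of $\deg_{\mathcal I_k}(u_{k+1})$, where $d_v$ is the number of explored neighbors of $v$; you never address this. The appendix domination by $\mathrm{Bin}(k(n-k),p_n)$ is a coupling run from step $1$ on a pure $G(n,p_n)$ exploration, and it breaks during the dense phase.

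The estimates you do write down also fail.

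In (ii) you treat $e(V_1,\mathcal I_k^c)$, of mean $n_1(n-k)p_n$, as an excess over $p_nk(n-k)$, and claim $O(n^{\alpha-1}+np_n\log n/n)=o(1/\sqrt{np_n})$. This is false in the admissible range. Take $\alpha=0.9$ and $np_n=n^{0.7}$, which even satisfies $np_n\ll n^{2\alpha-1}/\log n$; then both $n^{-0.1}$ and $n^{-0.3}\log n$ are $\gg n^{-0.35}$. Moreover, these edges are not an excess at all: since $V_1\subseteq\mathcal I_k$, they are already part of $p_nk(n-k)$. The paper simply union-bounds over the at most $2^n$ sets $S\supseteq V_1$ with $c_1n\le|S|\le c_2n$. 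Their means are at least $c_1(1-c_2)\,n\cdot np_n$, so a relative deviation $M/\sqrt{np_n}$ has Chernoff probability $e^{-cM^2n}$, which beats $2^n$ for $M$ large.

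In (i), your upper bound from Proposition~\ref{prop:G_npbx_cut_bound} at $I_1(k)=n_1$ is fine, but the lower bound is misdiagnosed. Your union bound over $\binom{n}{\le k}$ explored sets costs $k\log n\ge n_1\log n$ against a mean of order $n_1np_n$. So even the $V_1$ term fails when $np_n=O(\log n)$, which is the same obstacle you attribute only to the background term. What you miss is the structure of $\mathcal I_k$ on the event given by Lemma~\ref{lem:first-entry} and Proposition~\ref{prop:I1_in_V1_bd}(ii): there $\mathcal I_k=V_1\cup T$ with $T\subseteq V_0$ and $|T|\le m_n=O(n^{1-\alpha}\log n+np_n\log n)=o(n_1)$. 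Only $T$ is free, so the entropy is $O(m_n\log n)=o(n_1np_n)$, using $\alpha>1/2$. Since also $k=(1+o(1))n_1$, the term $e(V_1,\mathcal I_k^c)$ alone already gives $C_k\ge(1-o(1))p_nk(n-k)$. The background term can therefore be bounded below by $0$, and no sequential increment argument is needed.
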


\section{Detection of the Dense Component Phase}
\label{sec:detection}

\subsection{Oracle and plug-in transforms}
\label{subsec:plugin_transform_stability}

The change-point procedure in Algorithm~\ref{algo:k1k2_estimation} uses the plug-in background rate $\widehat m_k:=\widehat p_n\,k(n-k)$, where \(\widehat p_n\) is estimated from the late window \(\{k:n/3\le k\le n/2\}\). For the analysis, it is convenient to instead work with the oracle background rate $m_k:=p\,k(n-k)$.
Hence, we define the oracle transform and the feasible transform by 
%and to define the corresponding oracle transform
\[
 Z_k:=\sqrt{12}\left(e^{-m_k a_k}-\frac12\right)
 \quad \text{ and } \quad 
 \widehat Z_k := \sqrt{12}\left(e^{-\widehat m_k a_k}-\frac12\right),
\]
respectively. The algorithm uses the feasible transforms $\widehat Z_k$, and the following lemma shows that the difference between the oracle and the feasible transforms is asymptotically negligible. 
Importantly, this proof does not introduce any circular argument: the estimator \(\widehat p_n\) is computed on the fixed late window \([n/3,n/2]\), and its consistency follows from Corollary~\ref{cor:p_hat_rate}, and does not rely on the consistency of the change-point estimators \(\widehat K_1\) and \(\widehat K_2\). 

 \begin{lemma}[Stability of the probability integral transform]
\label{lem:PIT_plugin_stability}
Let $\varepsilon_n:=\left|\frac{\widehat p_n}{p_n}-1\right|$. 
Conditionally on \(\{\varepsilon_n<1\}\), we have 
\[
 \max_{1 \le k \le \lfloor n / 2 \rfloor } \left| \widehat Z_k-Z_k \right| \wle \frac{\sqrt{12}}{e} \, \frac{\varepsilon_n}{1-\varepsilon_n}.
\]
In particular, for every collection of bins \(B_j \subseteq [1, \lfloor n / 2 \rfloor ] \),
\[
 \sup_j \left| \frac1{|B_j|}\sum_{k\in B_j}\widehat Z_k - \frac1{|B_j|}\sum_{k\in B_j}Z_k \right|
 \wle \frac{\sqrt{12}}{e} \, \frac{\varepsilon_n}{1-\varepsilon_n}.
\]
\end{lemma}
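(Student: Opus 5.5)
The plan is a deterministic, pathwise argument: fix a realization with $\varepsilon_n<1$ and bound $|\widehat Z_k-Z_k|$ for each $k$ via a mean-value estimate on $x\mapsto e^{-x}$. For fixed $k\le\lfloor n/2\rfloor$ write $x:=m_k a_k\ge0$ and $\widehat x:=\widehat m_k a_k=(\widehat p_n/p_n)\,x$. If $a_k=\infty$ (exploration stopped), both transforms equal $-\sqrt{3}$ and there is nothing to prove, so assume $a_k<\infty$. Then
\[
 |\widehat Z_k-Z_k| \weq \sqrt{12}\,\bigl|e^{-\widehat x}-e^{-x}\bigr| \wle \sqrt{12}\,|\widehat x-x|\,e^{-\min\{x,\widehat x\}} .
\]

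Set $r:=\widehat p_n/p_n\in[1-\varepsilon_n,1+\varepsilon_n]$, so that $r>0$ on the conditioning event. We have $|\widehat x-x|=|r-1|\,x\le\varepsilon_n x$, and $\min\{x,\widehat x\}=\min\{1,r\}\,x\ge(1-\varepsilon_n)x$. Hence
\[
 |\widehat Z_k-Z_k| \wle \sqrt{12}\,\varepsilon_n\, x\, e^{-(1-\varepsilon_n)x}.
\]
The key elementary step is $\sup_{x\ge0} x e^{-cx}=1/(ce)$ with $c=1-\varepsilon_n>0$. This yields $|\widehat Z_k-Z_k|\le \frac{\sqrt{12}}{e}\frac{\varepsilon_n}{1-\varepsilon_n}$, uniformly in $k$, since the bound does not depend on $k$ or $a_k$. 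This step is the only mild obstacle: one has to take the minimum of the two exponents in the mean-value bound, rather than $x$ itself, to get the factor $1/(1-\varepsilon_n)$ when $r<1$.

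The bin statement follows at once. An average of $|B_j|$ terms, each bounded in absolute value by the same constant, is bounded by that constant (triangle inequality), and the supremum over $j$ preserves the bound.
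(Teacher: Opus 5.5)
Your proof is correct and follows essentially the same route as the paper: a mean-value bound on $t\mapsto e^{-t}$ with the intermediate exponent bounded below by $(1-\varepsilon_n)\,m_k a_k$, then $\sup_{y\ge0} y e^{-(1-\varepsilon_n)y}=1/(e(1-\varepsilon_n))$, then averaging over bins. Your $\min\{x,\widehat x\}$ formulation states the intermediate-point bound more cleanly than the paper's ``$s\in[1,r_n]$'', which is only literally correct when $r_n\ge 1$.
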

\begin{proof}
Write $r_n:=\frac{\widehat p_n}{p_n}$ and $y_k:=m_k a_k\ge0$. 
Then $\widehat m_k a_k=r_n y_k$, and hence
\[
 |\widehat Z_k-Z_k|
 \weq \sqrt{12}\left|e^{-r_n y_k}-e^{-y_k}\right|.
\]
By the mean-value theorem, we have $\left|e^{-r_n y_k}-e^{-y_k}\right| = |r_n-1|\,y_k e^{-s y_k}$ for some $s \in [1, r_n]$. %for some \(s\) between \(1\) and \(r_n\).
On the event \(\{|r_n-1|<1\}\), we have \(s\ge 1-\varepsilon_n\). Therefore
\[
 y_k e^{-s y_k}
 \wle \sup_{y\ge0} \, y e^{-(1-\varepsilon_n)y}
 \weq \frac{1}{e(1-\varepsilon_n)}.
\]
Thus $|\widehat Z_k-Z_k| \le \frac{\sqrt{12}}{e} \frac{\varepsilon_n}{1-\varepsilon_n}$. Averaging over bins gives the second claim.
\end{proof}

Lemma~\ref{lem:PIT_plugin_stability} ensures that, if \(\widehat p_n / p_n\to1\) in probability, then $\sup_k|\widehat Z_k-Z_k|=o_{\mathbb P}(1)$, and the same holds uniformly for all binned averages. 
In the remainder of this section, we prove the change-point guarantees for the oracle variables \(Z_k\). By Lemma~\ref{lem:PIT_plugin_stability} and the consistency of $\widehat p_n$ (Corollary~\ref{cor:p_hat_rate}), the same conclusions hold for the feasible variables \(\widehat Z_k\) used by Algorithm~\ref{algo:k1k2_estimation}, after changing the fixed thresholds by a negligible \(o_{\mathbb P}(1)\) amount.

\subsection{Detecting the Start of the Dense-Component Phase}
\label{subsubsec:detecting_K1}

A key technical point is that the probability integral transform $Z_k$ defined in Expression (\ref{def:probability_integral_transform}) is computed using the idealized baseline rate $m_k = p_n k(n-k)$, whereas the true conditional rate of the process is the random cut size $C_k$. The following lemma shows that this mismatch is negligible before the cascade reaches $V_1$, but becomes detectable as soon as the exploration enters the dense component. Thus, the transformed variables have near-zero drift before $K_1$ and a positive drift of asymptotic magnitude $\sqrt{3}$ shortly after $K_1$. 

 \begin{lemma}%[Drift of the transformed increments before and after the start of the dense-component phase]
\label{lm:jump_size}
%The following holds 
\begin{enumerate}[(i)]
  \item \emph{(Before reaching the dense component)} Suppose that $\alpha \in (0,1)$ and \(n p_n \gg 1\). We have $\max\limits_{1\le k<K_1} \E[Z_k\mid\cF_k]=o_{\pr}(1)$. 
  \item \emph{(After reaching the dense component.)} Suppose that $\alpha \in (1/2, 1)$ and $1 \ll n p_n \ll n^{2\alpha-1} / \log n$. For any fixed $D > 0$, we have  $\max\limits_{K_1 \le k\le K_1+D\log n} \left| \E[Z_k\mid\cF_k] - \sqrt{3} \right| \weq o_{\pr}(1)$. 
  %Used to be:  $\max\limits_{K_1+c_1\log n\le k\le K_1+c_2\log n} \left| \E[Z_k\mid\cF_k] - \sqrt{3} \right| \weq o_{\pr}(1)$
\end{enumerate}
\end{lemma}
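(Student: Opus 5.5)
The starting point is an exact formula for the conditional drift. Since $a_k\mid\cF_k\sim\Exp(C_k)$, we have $\E[e^{-m_k a_k}\mid\cF_k]=C_k/(C_k+m_k)$. Hence
\[
\E[Z_k\mid\cF_k]=\sqrt{12}\left(\frac{C_k}{C_k+m_k}-\frac12\right)=\sqrt{3}\,\frac{C_k-m_k}{C_k+m_k}=\sqrt3\,\frac{\rho_k-1}{\rho_k+1},
\qquad \rho_k:=\frac{C_k}{m_k}.
\]
The map $\rho\mapsto\sqrt3(\rho-1)/(\rho+1)$ is $\sqrt3/2$-Lipschitz near $\rho=1$ and increases to $\sqrt3$ as $\rho\to\infty$. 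Part (i) therefore reduces to showing $\max_{k<K_1}|\rho_k-1|=o_{\pr}(1)$. Part (ii) reduces to showing $\min_{K_1\le k\le K_1+D\log n}\rho_k\to\infty$ in probability, since $\sqrt3-\E[Z_k\mid\cF_k]=2\sqrt3/(\rho_k+1)$.

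\textbf{Part (i).} I would use the same coupling as in the proof of Lemma~\ref{lem:first-entry}. Before the first visit to $V_1$, the explored set contains no vertex of $V_1$, so every potential cut edge has probability $p_n$ and no internal edge of $V_1$ is ever queried. Consequently the process $(C_k)_{k<K_1}$ coincides with the cut process of an FPP on a coupled $G(n,p_n)$ graph, up to the stopping index $K_1$. Applying Proposition~\ref{prop:homogeneous-cut-concentration} to that $G(n,p_n)$ exploration gives $\max_{k\le n/2}|\rho_k-1|\le\sqrt{24\log(enp_n)/(np_n)}=o(1)$ w.h.p. To apply this range we also need $K_1\le n/2$ w.h.p., which holds by Lemma~\ref{lem:first-entry}. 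The main subtlety is that the coupling must agree up to the random time $K_1$. This is fine because the two explorations choose identical next vertices as long as no $V_1$ vertex has been explored. If the exploration is also affected by edges between $V_1$ and $V_0$ (these have probability $p_n$ anyway), the laws still coincide.

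\textbf{Part (ii).} Intersect with the w.h.p.\ event $\{K_1\le n^{1-\alpha}\log n\}$ from Lemma~\ref{lem:first-entry}. Proposition~\ref{prop:I1_in_V1_bd}(i) applies with $m_n=\lfloor (D+1)\log n\rfloor$ and gives $I_1(k)=k-K_1+1$ for $K_1\le k\le K_1+D\log n$. Proposition~\ref{prop:G_npbx_cut_bound} then gives $C_k\ge\xi D_k-\Delta_k^{\mathrm{in}}$ with $D_k=I_1(k)(n_1-I_1(k))\ge I_1(k)\,n_1/2$. Since $M_k=I_1(k)$ in this range,
\[
\Delta_k^{\mathrm{in}}\lesssim\sqrt{D_k\,I_1(k)\log n}\lesssim I_1(k)\sqrt{n_1\log n}=o(I_1(k)n_1).
\]
Hence $C_k\ge c\,\xi\,I_1(k)\,n_1\ge c\,\xi\, n^\alpha$. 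On the other side, $m_k=p_nk(n-k)\le p_n n k\le p_n n\,(n^{1-\alpha}\log n+D\log n+1)\lesssim np_n\,n^{1-\alpha}\log n$. Therefore
\[
\rho_k\gtrsim\frac{n^{\alpha}}{np_n n^{1-\alpha}\log n}=\frac{n^{2\alpha-1}}{np_n\log n}\to\infty
\]
uniformly in $k$, by the hypothesis $np_n\ll n^{2\alpha-1}/\log n$. This gives $\max_k|\E[Z_k\mid\cF_k]-\sqrt3|=o_{\pr}(1)$.

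\textbf{Main obstacle.} The hardest step is checking that Proposition~\ref{prop:I1_in_V1_bd}(i) and Proposition~\ref{prop:G_npbx_cut_bound} combine uniformly over this random window. The first is stated conditionally on $\{K_1\le n^{1-\alpha}\log n\}$, and the second holds simultaneously for all $k\le n/2$, which covers the window. At $k=K_1$ we have $I_1=1$, so the fluctuation term $\Delta^{\mathrm{in}}_k\approx\sqrt{2\xi(1-\xi)n_1\log n}$ must be compared with $\xi n_1$. This comparison still works, so even the very first step of the window is covered.
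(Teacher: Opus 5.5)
Your proposal is correct and follows the paper's proof. The paper also uses the identity $\E[Z_k\mid\cF_k]=\sqrt3\,(C_k-m_k)/(C_k+m_k)$. For part (i) it uses the coupling with a $G(n,p_n)$ graph agreeing off $\binom{V_1}{2}$ together with Proposition~\ref{prop:homogeneous-cut-concentration}, and for part (ii) it combines Proposition~\ref{prop:I1_in_V1_bd}(i) with the lower bound of Proposition~\ref{prop:G_npbx_cut_bound} to show $m_k/C_k=o(1)$ on the window. The only cosmetic difference is that you discard the factor $I_1(k)$ in the lower bound on $C_k$, whereas the paper keeps it; this yields the same final condition $np_n\log n\ll n^{2\alpha-1}$.
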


The proof is given in Section~\ref{sec:prf_jump_size}. Lemma~\ref{lm:jump_size} shows that the transformed variables $Z_k$ have vanishing mean for every $k < K_1$ and a positive mean drift of order $ \sqrt{3} $ shortly after~$K_1$ vertices have been explored. This suggests a simple detection rule: average the $Z_k$ over consecutive bins of size $w$ and declare a change as soon as a bin average crosses a threshold~$H$ strictly between $0$ and $\sqrt{3}$. The next proposition shows that, with bin size $w = \Theta(\log n)$, this rule localizes $K_1$ up to a single bin whp. 

 \begin{proposition}[Consistency of the first-entry estimator]
\label{prop:K1_detection}
Suppose that $\alpha\in(1/2,1)$ and $1 \ll np_n \ll n^{2\alpha-1} / \log n$.
Let $\kappa_1$ be an integer with $\kappa_1=\Theta(\sqrt n)$, let $z\in(0,\sqrt3)$ and $c>12/z^2$, and set $H_1:=z/2$ and $w:=\lceil c\log n\rceil$.
Then the estimator $\widehat K_1$ returned by Algorithm~\ref{algo:k1k2_estimation} satisfies
\[
    \pr\bigl(|\widehat K_1-K_1|\le w\bigr)=1-o(1).
\]
\end{proposition}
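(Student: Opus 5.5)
We show two things, both for the oracle variables $Z_k$ and then transferred to $\widehat Z_k$ through Lemma~\ref{lem:PIT_plugin_stability} and Corollary~\ref{cor:p_hat_rate}. First, w.h.p.\ no bin lying entirely before $K_1$ has average above $H_1$; this rules out false alarms. Second, w.h.p.\ the first bin lying entirely after $K_1$ has average above $H_1$; this guarantees detection. We work on the event $\{K_1\le n^{1-\alpha}\log n\}$, which has probability $1-o(1)$ by Lemma~\ref{lem:first-entry}. Since $\alpha>1/2$, on this event $K_1=o(\sqrt n)$, so $K_1+2w\le \kappa_1$ for $n$ large and the relevant bin lies inside the search window $\{1,\dots,L_1\}$.

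Under these two facts the conclusion follows directly. Let $j_0$ be the bin containing $K_1$. By the first fact, $j_1^*\ge j_0$. By the second fact, $j_1^*\le j_0+1$. Hence $\widehat K_1=(j_1^*-1)w$ lies in $\{(j_0-1)w,\,j_0w\}$, and in either case $|\widehat K_1-K_1|\le w$.

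Both facts rest on a martingale decomposition. Write $Z_k=\E[Z_k\mid\cF_k]+\zeta_k$. Since $a_k$ is independent of $\cF_k$-events beyond $C_k$ and $t_{k+1}$ is $\cF_{k+1}$-measurable, the $\zeta_k$ form a martingale difference sequence with respect to $(\cF_{k+1})$, bounded by $2\sqrt3$.

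\textbf{No false alarms.} Take a bin $B_j$ with $jw<K_1$, $j\le L_1$. The event that the bin lies before $K_1$ is not predictable, so we handle it as follows. For $k<K_1$ the process coincides with the $\gnp$ exploration (as in the proof of Lemma~\ref{lem:first-entry}). We can therefore define the oracle sequence on the pure $\gnp$ exploration, in which $\E[Z_k\mid\cF_k]$ is an explicit function $g(C_k/m_k)$, with $g(r)=\sqrt{12}(r/(r+1)-1/2)$. Proposition~\ref{prop:homogeneous-cut-concentration} gives $\max_k|g(C_k/m_k)|=o(1)$ w.h.p. Azuma--Hoeffding gives
\[
\pr\Bigl(\tfrac1w\textstyle\sum_{k\in B_j}\zeta_k>z/2-o(1)\Bigr)\le \exp\bigl(-w(z/2-o(1))^2/24\bigr).
\]
We need this to be summable over $L_1=O(\sqrt n/\log n)$ bins. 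With $w=c\log n$, the bound is $n^{-cz^2/96+o(1)}$, so we need $cz^2/96>1/2$.

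This requirement does not match the hypothesis $c>12/z^2$. We would refine the estimate in two ways: use the variance bound $\mathrm{Var}(\zeta_k\mid\cF_k)\le 1+o(1)$ via Freedman/Bernstein in place of Azuma, and count only the $\Theta(\sqrt n/\log n)$ bins. Bernstein with variance $1$ gives an exponent of about $w(z/2)^2/2=cz^2\log n/8$. Since $c>12/z^2$, this exceeds $1.5\log n>\tfrac12\log n$, which is enough. A union bound over bins then finishes this part.

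\textbf{Detection.} The bin $j_0+1$ lies in $[K_1,K_1+2w]\subseteq[K_1,K_1+D\log n]$ with $D=2c+2$. By Lemma~\ref{lm:jump_size}(ii), $\E[Z_k\mid\cF_k]\ge\sqrt3-o_{\pr}(1)$ on this bin. The same Bernstein bound, applied to a single bin, gives average at least $\sqrt3-o(1)-z/2>z/2=H_1$ w.h.p., since $z<\sqrt3$. The index $K_1$ is a stopping time and the bin is a deterministic offset from it, so the martingale bound applies.

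\textbf{Main obstacle.} We expect the main difficulty to be the false-alarm step. It requires a uniform concentration bound over many bins, it must cope with the fact that "before $K_1$" is random (handled via the $\gnp$ coupling), and the constants in the Bernstein bound must be tuned to match the hypothesis $c>12/z^2$. A final $o_{\pr}(1)$ perturbation, from Lemma~\ref{lem:PIT_plugin_stability}, transfers everything to $\widehat Z_k$.
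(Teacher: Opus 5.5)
Your proof is correct and follows the same skeleton as the paper's proof. Both argue with the oracle variables first and transfer to the feasible ones via Lemma~\ref{lem:PIT_plugin_stability}. Both rule out crossings in bins with $jw<K_1$ by a union bound over the $L_1=O(\sqrt n/\log n)$ bins. Both force a crossing in the bin with index $\lceil K_1/w\rceil+1$, which lies in $[K_1,K_1+2w]$, using Lemma~\ref{lm:jump_size}(ii) and a martingale bound started at the stopping time $w\lceil K_1/w\rceil+1$ (capped at $\kappa_1$).

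The difference is in the concentration step. The mismatch with $c>12/z^2$ that you ran into comes from a crude Azuma bound. Conditionally on $\mathcal F_k$, $\zeta_k$ lies in $[-\sqrt3-b_k,\sqrt3-b_k]$, where $b_k:=\mathbb E[Z_k\mid\mathcal F_k]$ is $\mathcal F_k$-measurable. This interval has length $2\sqrt3$, not $4\sqrt3$, so Hoeffding's lemma gives $\mathbb E[e^{\lambda\zeta_k}\mid\mathcal F_k]\le e^{3\lambda^2/2}$ and a bin tail $e^{-wt^2/6}$. With $t=z/2$ this is $n^{-cz^2/24}$, and the condition $cz^2/24>1/2$ is exactly the hypothesis $c>12/z^2$; this is the paper's Lemma~\ref{lemma:FP_bound}.

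Your Freedman/Bernstein substitute also works, but the exponent you quote, $w(z/2)^2/2$, drops the linear term $Mt/3$ in Bernstein's denominator. That term is not negligible, because the deviation $z/2$ is comparable to the range. The correct exponent is $wt^2/\bigl(2(\sigma^2+Mt/3)\bigr)$. With $t=z/2$, $\sigma^2=1+o(1)$ and $M=2\sqrt3$, it is at least $(1-o(1))\,\tfrac{cz^2}{16}\log n$, and $cz^2/16>3/4>1/2$. So your union bound still closes. This route would even tolerate a somewhat smaller $c$, at the price of needing the conditional variance bound.

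Finally, the $G(n,p_n)$ coupling you use for the non-predictable event $\{jw<K_1\}$ is valid but unnecessary. Since you control $\frac1w\sum_{k\in B_j}\zeta_k$ unconditionally, it suffices that $b_k\le\epsilon$ holds on $\{jw<K_1\}$ intersected with a high-probability event. This is how Lemma~\ref{lemma:FP_bound}(i) is applied in the paper; the coupling enters only through Lemma~\ref{lm:jump_size}(i).
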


The proof is given in Section~\ref{sec:prf_K1_detection}. 
To obtain the explicit bound $|\widehat K_1-K_1|\le 4.1\log n$ in Theorem~\ref{thm:estimation}, choose, for instance, $z=1.72<\sqrt 3$, for which $\frac{12}{z^2}\simeq 4.056<4.09.$ As the post-change drift in Lemma~\ref{lm:jump_size} is $\sqrt 3(1-o(1))$, this choice of $z$ is valid for all sufficiently large~$n$. Thus, the bin width $w=\lceil 4.09\log n\rceil$ satisfies the condition required in Proposition~\ref{prop:K1_detection}. Moreover, $w\le 4.1\log n$ for all sufficiently large $n$, which yields the claimed bound.

\subsection{Detecting the End of the Dense-Component Phase}

The following lemma is an analogue of Lemma~\ref{lm:jump_size} and states that the transformed variables $Z_k$ have non-vanishing mean for $k< K_2$ and vanishing mean for $k > K_2$. 

\begin{lemma}%[Drift before and after the end of the dense-component phase]
\label{lm:K_2_Z_exp}
Suppose that \(\alpha \in (1/2,1)\) and $1 \ll n p_n \ll n^\alpha / \log n$. 
Let $s_n$ and $r_n$ be deterministic sequences such that $n^{1-\alpha}\log n+np_n\log n \ll s_n\ll n_1$ and $np_n\ll r_n\ll n_1$. 
%Set $s_n:=\sqrt n+np_n\log^2 n,$ and let $(r_n)$ be any deterministic sequence such that $np_n\ll r_n\ll n_1$. 
Fix two strictly positive constants $b$ and $B$. 
Then, 
\begin{enumerate}[(i)]
\item Before the dense component is fully explored, $\max\limits_{ K_1+s_n-b\log n \le k\le K_2^\star-r_n } \left| \E[Z_k\mid\mathcal F_k] - \sqrt{3} \right| = o_{\pr}(1);$
\item Right after, we have $ \max\limits_{ K_2 \le k\le K_2 + B n p_n\log n } \E[Z_k\mid\mathcal F_k] = o_{\pr}(1)$. 
\end{enumerate}
\end{lemma}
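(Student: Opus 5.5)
The key identity is the closed form of the conditional mean. Since $a_k\mid\cF_k\sim\Exp(C_k)$, we have $\E[e^{-m_k a_k}\mid\cF_k]=C_k/(C_k+m_k)$, and therefore
\[
 \E[Z_k\mid\cF_k]=\sqrt{12}\left(\frac{C_k}{C_k+m_k}-\frac12\right)=\sqrt{3}\,\frac{C_k-m_k}{C_k+m_k}.
\]
So part (i) amounts to showing $\min_k C_k/m_k\to\infty$ uniformly over the stated window, in probability. Part (ii) amounts to showing $\max_k |C_k/m_k-1|=o_{\pr}(1)$ over $K_2\le k\le K_2+Bnp_n\log n$. The map $x\mapsto\sqrt3(x-1)/(x+1)$ is continuous and tends to $\sqrt3$ as $x\to\infty$, so in both cases the conclusion follows directly from the cut bounds.

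\textbf{Part (ii).} This is immediate from Lemma~\ref{lem:post_dense_cut}(i) with the same $B$, since that lemma gives exactly uniform relative concentration of $C_k$ around $p_nk(n-k)=m_k$ on this window. The lemma's hypotheses, $\alpha\in(1/2,1)$ and $1\ll np_n\ll n^\alpha/\log n$, coincide with ours.

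\textbf{Part (i).} I would work on the intersection of three high-probability events:
\begin{itemize}
 \item $\{K_1\le n^{1-\alpha}\log n\}$, from Lemma~\ref{lem:first-entry};
 \item the event of Proposition~\ref{prop:I1_in_V1_bd}(ii), which gives $I_1(K_1-1+j)\ge\min\{j-Cnp_n\log n,\,n_1\}$;
 \item the event of Proposition~\ref{prop:G_npbx_cut_bound}, which gives $C_k\ge\xi D_k-\Delta_k^{\mathrm{in}}$.
\end{itemize}
For $k$ in the window, write $j=k-K_1+1$. Then $j\ge s_n-b\log n\gg np_n\log n$, so $I_1(k)\ge j/2\ge s_n/3$. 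Also $I_1(k)\le j\le n_1-r_n+1$, so $n_1-I_1(k)\ge r_n/2$. Hence $D_k\ge \tfrac{1}{6}s_n r_n$, and more usefully
\[
 D_k\ge \tfrac12\max\{I_1,\,n_1-I_1\}\cdot M_k\ge \tfrac{n_1}{4}M_k .
\]
In the case $\xi<1$ this gives $\Delta_k^{\mathrm{in}}/(\xi D_k)=\bigO\bigl(\sqrt{M_k\log n_1/D_k}\bigr)=\bigO\bigl(\sqrt{\log n/n_1}\bigr)\to0$. Consequently $C_k\ge\tfrac{\xi}{2}D_k$ uniformly on the window.

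\textbf{Comparison with the background rate.} It remains to compare $D_k$ with $m_k\le p_nkn$. We have $k\le K_1+n_1\le 2n_1$, so $m_k\le 2p_nnn_1$. Since $D_k\ge \tfrac12 n_1\min\{I_1,\,n_1-I_1\}$, we get
\[
 \frac{D_k}{m_k}\gtrsim \frac{\min\{s_n/3,\;r_n/2\}}{np_n}.
\]
Both entries of the minimum are $\gg np_n$: for $s_n$ this holds because $s_n\gg np_n\log n$, and for $r_n$ by assumption. Hence $C_k/m_k\to\infty$ uniformly on the window. The hypothesis $s_n\gg n^{1-\alpha}\log n$ is only needed so that the window lies where Proposition~\ref{prop:I1_in_V1_bd} applies and to absorb $K_1$.

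\textbf{Main obstacle.} The delicate step is the lower bound on $n_1-I_1(k)$ near the upper end $k=K_2^\star-r_n$. It depends on the deterministic bound $I_1(K_1-1+j)\le j$, and one must check that the indexing offset between $K_2^\star=K_1+n_1$ and $j$ is consistent. The second point needing care is the uniformity of the $\Delta^{\mathrm{in}}$ error relative to $D_k$ when $M_k$ is small; this is handled by the bound $D_k\ge n_1M_k/4$ above.
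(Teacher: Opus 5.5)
Your proposal is correct and follows essentially the paper's proof. Both rely on the closed form $\E[Z_k\mid\mathcal F_k]=\sqrt3\,(C_k-m_k)/(C_k+m_k)$. Both use Lemma~\ref{lem:post_dense_cut}(i) for part (ii). For part (i), both combine Lemma~\ref{lem:first-entry}, Proposition~\ref{prop:I1_in_V1_bd}(ii) and Proposition~\ref{prop:G_npbx_cut_bound}, with $\Delta_k^{\mathrm{in}}/D_k=\bigO(\sqrt{\log n_1/n_1})$.

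The only difference is cosmetic and lies in the final comparison. You use $k\le 2n_1$ and $D_k\ge \frac{n_1}{2}M_k$, which requires both $I_1(k)$ and $n_1-I_1(k)$ to be $\gg np_n$. The paper instead uses $k=(1+o(1))j\asymp I_1(k)$ to reduce directly to $np_n/R_k\le np_n/(r_n-1)$. Your variant does not actually need the hypothesis $s_n\gg n^{1-\alpha}\log n$.
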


The proof of this lemma is given in Section~\ref{sec:K_2_Z_exp}.

\begin{proposition}[Consistency of the endpoint estimator]
\label{prop:K2_detection}
Suppose that $\alpha\in(1/2,1)$ and $1 \ll n p_n \ll n^{2\alpha-1} / \log n$. 
Fix $z\in(0,\sqrt3)$ and $c>12/z^2$, and set $H_2:=\frac z4$, $w:=\lceil c\log n\rceil$, $\widehat s_n :=\left\lceil\sqrt n+n\widehat p_n\log^2 n\right\rceil$. 
Then the estimator $\widehat K_2$ returned by
Algorithm~\ref{algo:k1k2_estimation} satisfies
\[
 |\widehat K_2-K_2^\star| \weq \cO_{\pr}(np_n\log n).
\]
\end{proposition}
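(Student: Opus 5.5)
We work on the intersection of several high-probability events and argue about the oracle variables $Z_k$; Lemma~\ref{lem:PIT_plugin_stability} and Corollary~\ref{cor:p_hat_rate} then transfer everything to $\widehat Z_k$ at an $o_{\pr}(1)$ cost in the thresholds.

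\emph{Good event.} We require the following:
\begin{itemize}
\item $K_1\le n^{1-\alpha}\log n$ (Lemma~\ref{lem:first-entry});
\item $|\widehat K_1-K_1|\le w$ (Proposition~\ref{prop:K1_detection});
\item $0\le K_2-K_2^\star\le Cnp_n\log n$ (Proposition~\ref{prop:I1_in_V1_bd}(ii));
\item $\widehat p_n/p_n\to1$, so that $\widehat s_n=s_n(1+o(1))$ with $s_n:=\sqrt n+np_n\log^2 n$.
\end{itemize}
This $s_n$ satisfies $n^{1-\alpha}\log n+np_n\log n\ll s_n\ll n_1$, because $\alpha>1/2$ and $np_n\log^2 n\ll n^{2\alpha-1}\log n\ll n^\alpha$. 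Lemma~\ref{lm:K_2_Z_exp} therefore applies with this $s_n$ and any $r_n$ with $np_n\ll r_n\ll n_1$. We fix $r_n:=np_n\log n$.

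\emph{Martingale concentration over bins.} Write $Z_k=\E[Z_k\mid\cF_k]+\zeta_k$. Each $\zeta_k$ is a bounded martingale difference with $|\zeta_k|\le 2\sqrt3$, and $Z_k$ is $\cF_{k+1}$-measurable. Azuma--Hoeffding applied to a bin of size $w=\lceil c\log n\rceil$ gives
\[
\pr\bigl(|\textstyle\frac1w\sum_{\text{bin}}\zeta_k|>z/4\bigr)\le 2\exp\bigl(-wz^2/(16\cdot 3\cdot 2)\bigr).
\]
This exponent may be too weak for $c>12/z^2$ as stated. A sharper route uses the fact that conditionally each $Z_k$ has variance at most~$1$ (at most $1$ under the baseline, and bounded otherwise), and applies Freedman's inequality. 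A union bound over the $\le n/w$ bins then makes all bin fluctuations at most $z/4$, possibly with a small slack, or equivalently using the margin $z<\sqrt3$. This is the same mechanism as in Proposition~\ref{prop:K1_detection}, and I would reuse its bin-concentration step verbatim.

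\emph{No early stop.} The search starts at bin $j_{\mathrm{start}}$, whose left endpoint is at least $\widehat K_1+\widehat s_n-w\ge K_1+s_n(1-o(1))-2w$. Lemma~\ref{lm:K_2_Z_exp}(i) applies with $b$ chosen to absorb the few bins of slack. Alternatively, we slightly shrink $s_n$, which is harmless since any $s_n$ in the allowed range works. Every bin lying entirely in $[K_1+s_n-b\log n,\,K_2^\star-r_n]$ then has drift $\sqrt3-o(1)$, hence average at least $\sqrt3-z/4-o(1)>z/4=H_2$. So no downward crossing occurs before $K_2^\star-r_n-w$, which gives $\widehat K_2\ge K_2^\star-r_n-2w=K_2^\star-\cO(np_n\log n)$.

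\emph{Crossing after $K_2$.} By Lemma~\ref{lm:K_2_Z_exp}(ii) with $B$ large, the first full bin after $K_2$ lies inside $[K_2,K_2+Bnp_n\log n]$. Its drift is $o_{\pr}(1)$, so its average is at most $z/4-\delta$ for large $n$, provided the fluctuation bound is slightly below $z/4$ again. For this I would choose the fluctuation level to be $z/4-\delta$ using the strict inequality $c>12/z^2$. Hence $\widehat K_2\le K_2+w\le K_2^\star+Cnp_n\log n+w$.

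\emph{Main obstacle.} Combining the two bounds gives $|\widehat K_2-K_2^\star|=\cO_{\pr}(np_n\log n)$. The bins in the transition window $(K_2^\star-r_n,K_2)$ need no drift control at all, since any crossing there is already within the claimed error. The main obstacle is the concentration constants: $H_2=z/4$ leaves margin $z/4$ below and $\sqrt3-z/4$ above, and one must check that the variance-based (Freedman or Bernstein) bound with $w=c\log n$, $c>12/z^2$, gives fluctuations below $z/4$ simultaneously over all $\cO(n/\log n)$ bins. If this constant cannot be verified, I would fall back on the looser but sufficient statement that fluctuations are $o(1)$ when $w\gg\log n$, after checking what Proposition~\ref{prop:K1_detection}'s proof provides.
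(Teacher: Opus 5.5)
Your skeleton matches the paper's proof. You build a good event from Lemma~\ref{lem:first-entry}, Propositions~\ref{prop:K1_detection} and~\ref{prop:I1_in_V1_bd}, and Corollary~\ref{cor:p_hat_rate}. You rule out an early stop because bins inside $[K_1+s_n,\,K_2^\star-r_n]$ have drift $\sqrt3-o(1)$. You get a crossing in the first full bin after $K_2$ because the drift there is $o(1)$, and you need no control in the transition window. The gap is the concentration step, which you flag as the main obstacle but set up in a way that cannot work. You ask for all bin fluctuations to stay below $z/4$ simultaneously over the $\cO(n/\log n)$ bins. For $c$ close to $12/z^2$ this is beyond Hoeffding, which gives about $n^{-1/8}$ per bin, and beyond Freedman, where conditional variance of order one only improves this to roughly $n^{-3/8}$. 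Worse, the requirement is false. After $K_2$ the $Z_k$ are close to independent uniforms on $[-\sqrt3,\sqrt3]$, so the largest of the $\Theta(n/\log n)$ bin averages is about $\sqrt{2/c}\approx z/\sqrt6>z/4$. Your fallback $w\gg\log n$ changes the estimator, so it does not prove the proposition as stated.

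The missing idea is that the two directions have different margins and need different numbers of bins; this is why $H_2=z/4$ is placed asymmetrically.

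\emph{Early stop.} The union bound over all bins is needed only here. On these bins the drift is at least $z$, so the margin is $z-H_2=3z/4$. Lemma~\ref{lemma:FP_bound}(ii) with threshold $H_2+\eta$ gives per-bin probability $\exp\{-w(3z/4-\eta)^2/6\}$. Since $3cz^2/32>9/8$, this is at most $n^{-1-\delta}$ for small $\eta$, which is summable over the bins.

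\emph{Crossing after $K_2$.} Here you need a single bin, the one starting at $S:=w\lceil K_2/w\rceil+1$. Because $G$ and $V_1$ are $\cF_k$-measurable, $K_2$ and hence $S$ are stopping times, and Lemma~\ref{lemma:FP_bound}(i) handles such a random start. With small fixed slacks $\eta,\epsilon>0$ for the plug-in error and the residual drift, it gives probability $\exp\{-w(H_2-2\eta-\epsilon)^2/6\}=o(1)$. The small margin $z/4$ suffices precisely because no union bound is taken.

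\emph{A smaller point.} $\widehat s_n-s_n$ is of order $n|\widehat p_n-p_n|\log^2 n$, which need not be $\cO(\log n)$, so a fixed $b$ cannot absorb it. Your alternative of applying Lemma~\ref{lm:K_2_Z_exp} with a shrunken $s_n$ is the right fix; the paper uses $\frac12(\sqrt n+np_n\log^2 n)$. You should also check that the search starts no later than $K_2$, as the paper does.
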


\subsection{Estimation of Dense Component Size Parameter \texorpdfstring{$\alpha$}{alpha}}
Having estimated $K_1$ and $K^*_2 = K_1 + \lfloor n^\alpha \rfloor$, the estimation of $\alpha$ simply becomes
\begin{equation*}
    \widehat \alpha = \frac{\log \left(\widehat K_2 - \widehat K_1\right)}{\log n}.
\end{equation*}
%The following corollary shows the accuracy of the $\alpha$ estimation derived from the bounds on the $K_1$ and $K_2^*$ estimation.
\begin{corollary} \label{cor:alpha_consistency}
 Under the assumptions of Theorem~\ref{thm:estimation}, %the $\alpha$ estimate provided by Algorithm~\ref{algo:k1k2_estimation} satisfies  
 $|\widehat\alpha-\alpha| \weq \cO_{\pr} \left( n^{1-\alpha} p_n \right).$
\end{corollary}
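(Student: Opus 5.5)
The argument reduces the error in $\widehat\alpha$ to the error in the estimated phase length $\widehat n_1:=\widehat K_2-\widehat K_1$, and then applies a first-order expansion of the logarithm. Since $n_1=\lfloor n^\alpha\rfloor$, we have $\log n_1=\alpha\log n+\bigO(n^{-\alpha})$. The error decomposes as
\[
\widehat\alpha-\alpha \weq \frac{\log \widehat n_1-\log n_1}{\log n}+\frac{\log n_1-\alpha\log n}{\log n},
\]
and the second term is $\bigO(n^{-\alpha}/\log n)$. Because $np_n\to\infty$, this is negligible compared with $n^{1-\alpha}p_n$.

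To bound $\widehat n_1-n_1$, use $K_2^\star-K_1=n_1$ to write
\[
\widehat n_1-n_1\weq(\widehat K_2-K_2^\star)-(\widehat K_1-K_1).
\]
Proposition~\ref{prop:K2_detection} gives $|\widehat K_2-K_2^\star|=\bigO_{\pr}(np_n\log n)$. Proposition~\ref{prop:K1_detection} gives $|\widehat K_1-K_1|\le w=\bigO(\log n)$ with high probability. Both propositions are stated for the oracle transforms $Z_k$; they transfer to the feasible $\widehat Z_k$ through Lemma~\ref{lem:PIT_plugin_stability} together with the consistency of $\widehat p_n$. Combining the two bounds, $|\widehat n_1-n_1|=\bigO_{\pr}(np_n\log n)$. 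The assumption $np_n\ll n^{2\alpha-1}/\log n$ implies $np_n\log n\ll n^{2\alpha-1}\le n^\alpha$, so
\[
\delta_n:=\frac{\widehat n_1-n_1}{n_1}=\bigO_{\pr}\!\left(n^{1-\alpha}p_n\log n\right)=o_{\pr}(1).
\]
In particular $\widehat n_1>0$ with high probability, so $\widehat\alpha$ is well defined.

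On the event $|\delta_n|\le 1/2$, the inequality $|\log(1+x)|\le 2|x|$ for $|x|\le1/2$ gives $|\log\widehat n_1-\log n_1|\le 2|\delta_n|$. Dividing by $\log n$ yields
\[
|\widehat\alpha-\alpha|\weq\bigO_{\pr}\!\left(\frac{n^{1-\alpha}p_n\log n}{\log n}\right)+o\!\left(n^{1-\alpha}p_n\right)\weq\bigO_{\pr}\!\left(n^{1-\alpha}p_n\right).
\]
This is the claimed rate, and the $\log n$ factor from the $K_2$ localization is exactly cancelled by the normalisation by $\log n$.

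All the real work is contained in Proposition~\ref{prop:K2_detection}, which is taken as given. Within this corollary, the only points needing care are checking that the $\bigO(n^{-\alpha})$ rounding error from $\lfloor n^\alpha\rfloor$ and the $\bigO(\log n)$ error in $\widehat K_1$ are both dominated by the $np_n\log n$ term, and checking that the relative error is small enough to justify linearising the logarithm.
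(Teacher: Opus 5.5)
Your proof is correct and follows the paper's argument essentially step for step: the bound $|\widehat n_1-n_1|\le|\widehat K_2-K_2^\star|+|\widehat K_1-K_1|=\bigO_{\pr}(np_n\log n)$, the rounding error $\log n_1=\alpha\log n+\bigO(n^{-\alpha})$, the Lipschitz bound on the logarithm near $1$, and the division by $\log n$ that cancels the logarithmic factor. You also state explicitly, where the paper does not, that the $\bigO(n^{-\alpha}/\log n)$ term is dominated because $np_n\to\infty$. One small correction: Propositions~\ref{prop:K1_detection} and~\ref{prop:K2_detection} are already stated for the feasible estimators returned by Algorithm~\ref{algo:k1k2_estimation}, since the oracle-to-plug-in transfer is handled inside their proofs, so no separate transfer step is needed here.
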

The proof is given in Section~\ref{sec:pf_alpha_consistency}.

\section{Estimating \texorpdfstring{$p$}{p} and \texorpdfstring{$\xi$}{xi}}
\label{sec:estimation}

\subsection{Estimation of the background density \texorpdfstring{$p$}{p}}
\label{subsec:estimating_p}

We now turn to the estimation of the background edge probability $p$. Recall  that, conditionally on the history $\cF_k$, the inter-activation time $a_k$ is exponentially distributed with rate $C_k$. When $k$ lies outside the dense phase, the cut size $C_k$ is dominated by its background contribution $pk(n-k)$, and the rescaled variables $k(n-k)\,a_k$ are approximately i.i.d.\ $\Exp(p)$. 
This suggests estimating  $ p $  by maximum likelihood on a window of explorations occurring after the dense phase has been absorbed. We choose the window $ k \in [n/3,\, n/2] $, which lies entirely above the dense regime as $\alpha < 1 $.

We proceed in two steps. First, Lemma~\ref{lem:p_hat_concentration} gives a concentration bound for an MLE built from independent exponential whose rates are only \emph{approximately} of the form $p\,k(n-k)$. Second, Lemma~\ref{lem:post_dense_cut}~(ii) shows that, on the window $[n/3, n/2]$, the true rates $C_k$ indeed satisfy such an approximate relation with high probability. Combining the two yields Corollary~\ref{cor:p_hat_rate}. 
%Proofs of both lemmas are deferred to Appendix~\ref{app:proofs_estimation_p}.

\begin{lemma}[MLE for nearly-homogeneous exponentials]
\label{lem:p_hat_concentration}
Let $\mathcal W\subseteq\{1,\ldots,n-1\}$ be a nonempty
deterministic index set, and write $M:=|\mathcal W|$.
Let $p>0$, and let $(X_k)_{k\in\mathcal W}$ be independent
exponential random variables with rates $(\mu_k)_{k\in\mathcal W}$
satisfying
\[
    p\,k(n-k)(1-\varepsilon_-)
    \wle \mu_k
    \wle p\,k(n-k)(1+\varepsilon_+) 
    \qquad \forall k\in\mathcal W,
\]
for some $\varepsilon_-,\varepsilon_+\in[0,1)$.
Set $\varepsilon :=\max\left\{ \frac{\varepsilon_-}{1-\varepsilon_-}, \frac{\varepsilon_+}{1+\varepsilon_+} \right\}$ and define 
$\hp_{\MLE} := \frac{M}{\sum_{k\in\mathcal W}k(n-k)X_k}.$ 
There exists an absolute constant $C>0$ such that, for every $x\ge0$ satisfying $\varepsilon+u(x)<1$, 
\[
 \frac{|\hp_{\MLE}-p|}{p}
 \wle
 \frac{\varepsilon+u(x)}{1-\varepsilon-u(x)}.
\]
with probability at least $1-2e^{-x}$, where $u(x):=\frac{C}{1-\varepsilon_-} \left(\sqrt{\frac{x}{M}}+\frac{x}{M}\right)$. 
\end{lemma}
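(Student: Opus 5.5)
Since the statement is scale-invariant, I would first reduce to a sum of nearly i.i.d.\ standard exponentials. Set $Y_k:=\mu_k X_k$, so that the $Y_k$ are i.i.d.\ $\Exp(1)$. Writing $\rho_k:=p\,k(n-k)/\mu_k\in\bigl[\tfrac{1}{1+\varepsilon_+},\tfrac{1}{1-\varepsilon_-}\bigr]$, we get
\[
 \frac{p}{\hp_{\MLE}} \weq \frac1M\sum_{k\in\mathcal W} p\,k(n-k)X_k \weq \frac1M\sum_{k\in\mathcal W}\rho_k Y_k =: S .
\]
The aim is therefore to show that $|S-1|\le \varepsilon+u(x)$ with probability at least $1-2e^{-x}$. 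Once that holds, $\hp_{\MLE}/p=1/S$ and $|1/S-1|=|S-1|/S\le (\varepsilon+u)/(1-\varepsilon-u)$, which is the claimed bound (this step needs $\varepsilon+u<1$, i.e.\ $S>0$).

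Split $S-1=\bigl(\bar\rho-1\bigr)+\frac1M\sum_k\rho_k(Y_k-1)$, where $\bar\rho:=\frac1M\sum_k\rho_k$. The bias term is deterministic. We have $\rho_k-1\le \frac{1}{1-\varepsilon_-}-1=\frac{\varepsilon_-}{1-\varepsilon_-}$ and $1-\rho_k\le 1-\frac{1}{1+\varepsilon_+}=\frac{\varepsilon_+}{1+\varepsilon_+}$, so $|\bar\rho-1|\le\varepsilon$. This is precisely why $\varepsilon$ is defined as that maximum.

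For the fluctuation term, I would apply Bernstein's inequality for sub-exponential variables to the centered independent variables $\rho_k(Y_k-1)$. Their $\psi_1$-norms are bounded by an absolute constant times $\rho_k\le \frac1{1-\varepsilon_-}$. Bernstein gives, with probability at least $1-2e^{-x}$,
\[
 \Bigl|\frac1M\sum_k\rho_k(Y_k-1)\Bigr| \wle C\max_k\rho_k\Bigl(\sqrt{\tfrac{x}{M}}+\tfrac{x}{M}\Bigr)\wle u(x).
\]
If one prefers explicit constants, one can instead use the Laplace transform $\E e^{\lambda\rho(Y-1)}=e^{-\lambda\rho}/(1-\lambda\rho)$ together with the standard sub-gamma Chernoff bound, with variance factor $\sum\rho_k^2\le M/(1-\varepsilon_-)^2$ and scale $\max\rho_k$. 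Combining this with the bias bound via the triangle inequality gives $|S-1|\le\varepsilon+u(x)$.

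I expect no real obstacle. The only care points are checking that the bias bound matches the asymmetric definition of $\varepsilon$, and absorbing the constants from the sub-exponential tail into the absolute constant $C$.
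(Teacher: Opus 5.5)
Your proposal is correct and follows the paper's proof essentially step for step. Your $\rho_kY_k$ is exactly the paper's $Y_k=p\,k(n-k)X_k$, and the two ingredients are the same: the bias bound $|\E[S]-1|\le\varepsilon$ from the two-sided rate constraints, and the sub-exponential Bernstein bound with $\psi_1$-norm of order $1/(1-\varepsilon_-)$, followed by the same inversion $|1/S-1|=|S-1|/S$. One small precision: the condition $\varepsilon+u(x)<1$ is not needed for $S>0$, which holds almost surely; it gives the lower bound $S\ge 1-\varepsilon-u(x)>0$ on the concentration event, and that bound is what controls the denominator.
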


 \begin{corollary}%[Consistency of $\widehat p$]
\label{cor:p_hat_rate}
Assume \(\alpha \in (1/2,1)\) and $1 \ll n p_n \ll n^\alpha / \log n$.
%Assume the conditions of Theorem~\ref{thm:estimation}. 
Then
$\frac{|\widehat p_n-p_n|}{p_n}
 = \cO_{\pr}\left(\frac{1}{\sqrt{np_n}}\right)
    %= o_{\pr}(1).
$
\end{corollary}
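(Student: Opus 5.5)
The plan is to combine Lemma~\ref{lem:post_dense_cut}~(ii), which controls the true exploration rates on the late window, with Lemma~\ref{lem:p_hat_concentration}, which handles maximum likelihood estimation from approximately homogeneous exponentials. Write $\mathcal W:=\{\lfloor n/3\rfloor,\ldots,\lfloor n/2\rfloor\}$ and $M:=|\mathcal W|=\Theta(n)$. Then $\widehat p_n$ in~\eqref{eq:estimate_p} is exactly $\hp_{\MLE}$ built from $X_k=a_k$, $k\in\mathcal W$.

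\textbf{Step 1: control the rates on the window.} Since $\mathcal W\subseteq[c_1n,c_2n]$ with, say, $c_1=1/4$ and $c_2=3/4$, Lemma~\ref{lem:post_dense_cut}~(ii) applies under the stated assumptions. It gives a random $\delta_n$ with $\delta_n=\cO_{\pr}(1/\sqrt{np_n})$ such that
\[
p_nk(n-k)(1-\delta_n)\le C_k\le p_nk(n-k)(1+\delta_n)\qquad\text{for all }k\in\mathcal W.
\]
We also need that the exploration is still running on $\mathcal W$, i.e.\ $C_k>0$. This holds w.h.p., since the source lies in the giant component and $\delta_n<1$ w.h.p. Fix $\epsilon>0$ and pick $A$ so that the event $E_A:=\{\delta_n\le A/\sqrt{np_n}\}$ has probability at least $1-\epsilon$ for large $n$.

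\textbf{Step 2: reduce to independent exponentials.} The difficulty is that the $a_k$ are not independent with deterministic rates. They are only conditionally $\Exp(C_k)$ given $\cF_k$, and $C_k$ is itself random and adapted. To handle this, I would construct the $a_k$ as $a_k=E_k/C_k$, where the $E_k$ are i.i.d.\ $\Exp(1)$ variables and each $E_k$ is independent of $\cF_k$. This is the standard time-change representation of the continuous-time process.

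Then $\sum_{k\in\mathcal W}k(n-k)a_k=\sum_k E_k\,k(n-k)/C_k$. On $E_A$, each weight $k(n-k)/C_k$ lies in $[p_n^{-1}(1+\delta)^{-1},\,p_n^{-1}(1-\delta)^{-1}]$ with $\delta=A/\sqrt{np_n}$. This sandwiches the sum between $p_n^{-1}(1\pm\delta)^{\mp1}\sum_kE_k$.

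The sum $\sum_kE_k$ involves i.i.d.\ variables with deterministic rates, so Lemma~\ref{lem:p_hat_concentration} applies with $\mu_k=p_nk(n-k)$ and $\varepsilon=0$, or directly a Bernstein bound. With $x$ fixed large, this gives $|\sum_kE_k/M-1|\le C(\sqrt{x/M}+x/M)$ with probability at least $1-2e^{-x}$. Since the sandwich holds pathwise, independence of the $E_k$ from the weights is not even needed.

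Equivalently, the argument is Lemma~\ref{lem:p_hat_concentration} applied with $\varepsilon\asymp\delta$ and the deterministic-rate bound transferred through the monotone sandwich.

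\textbf{Step 3: combine.} On $E_A$ intersected with the concentration event, which has probability at least $1-\epsilon-2e^{-x}$, we get
\[
\frac{|\widehat p_n-p_n|}{p_n}\le\frac{\varepsilon+u(x)}{1-\varepsilon-u(x)}=\cO\!\left(\frac{A}{\sqrt{np_n}}+\sqrt{\frac{x}{n}}\right).
\]
Because $np_n\le n$, the term $\sqrt{x/n}$ is at most $\sqrt{x}/\sqrt{np_n}$. Letting $\epsilon\to0$ gives tightness of $\sqrt{np_n}\,|\widehat p_n-p_n|/p_n$, which is the claim.

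The main obstacle is Step~2: making the reduction from the adapted, random-rate increments to independent exponentials rigorous. The coupling $a_k=E_k/C_k$ together with the pathwise sandwich avoids any martingale argument. The substantive probabilistic input, uniform concentration of $C_k$ at relative precision $1/\sqrt{np_n}$, is supplied entirely by Lemma~\ref{lem:post_dense_cut}~(ii).
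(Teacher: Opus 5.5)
Your proof is correct and shares the paper's skeleton. Lemma~\ref{lem:post_dense_cut}~(ii) supplies the band $|C_k/(p_nk(n-k))-1|=\cO_{\pr}(1/\sqrt{np_n})$ on the late window, and concentration of a normalized exponential sum does the rest. The difference is in how the randomness of the rates is handled.

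The paper conditions on $\mathcal J=\sigma(G,V_1,(u_k)_{k\ge1})$ and invokes the standard Markov-chain fact that, given the whole jump chain, the holding times $a_k$ are independent $\mathrm{Exp}(C_k)$. It then applies Lemma~\ref{lem:p_hat_concentration} verbatim, conditionally on $\mathcal J$; the band event is $\mathcal J$-measurable.

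You instead use $E_k:=C_ka_k$, which are i.i.d.\ $\mathrm{Exp}(1)$ by the one-step law $a_k\mid\mathcal F_k\sim\mathrm{Exp}(C_k)$ alone. You then sandwich $p_n\sum_k k(n-k)a_k$ pathwise between $(1+\delta)^{-1}\sum_kE_k$ and $(1-\delta)^{-1}\sum_kE_k$. With $S:=\frac1M\sum_kE_k$ and $|S-1|\le u$, this yields $|\widehat p_n/p_n-1|\le(\delta+u)/(1-u)$. Your Step~3 display should be read this way: it follows from the sandwich, not from applying Lemma~\ref{lem:p_hat_concentration} to the $a_k$ themselves, whose rates are random.

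Your route is slightly more elementary, since it needs neither conditional independence given the jump chain nor a conditional use of the lemma. The paper's route uses Lemma~\ref{lem:p_hat_concentration} exactly as stated. A small bonus of your version: taking $c_1=1/4$, $c_2=3/4$ genuinely covers the window $\{\lfloor n/3\rfloor,\dots,\lfloor n/2\rfloor\}$ used in~\eqref{eq:estimate_p}, whereas the paper's proof works with $\lceil n/3\rceil$.
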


\begin{proof}
Set $\cS_n := \left\{ \lceil n/3\rceil,\ldots,\lfloor n/2\rfloor \right\}$ and $m:=|\cS_n|=\Theta(n).$ 
By Lemma~\ref{lem:post_dense_cut}~(ii), applied with $c_1=1/3$ and $c_2=1/2$, we have 
\[
 \varepsilon_n
 := \sup_{k\in\cS_n} \left| \frac{C_k}{p_nk(n-k)}-1 \right|
 \weq \cO_{\pr}\left(\frac{1}{\sqrt{np_n}}\right).
\]
In particular, $\varepsilon_n=o_{\pr}(1)$.

Let $\mathcal J := \sigma\bigl(G,V_1,(u_k)_{k\ge1}\bigr).$ 
Conditionally on $\cJ$, the inter-arrival times $\{a_k:k\in\cS_n\}$ are independent and satisfy $a_k \cond \cJ \sim\Exp(C_k).$ 
Moreover, on the event $\{\varepsilon_n<1\}$,
\[
 p_nk(n-k)(1-\varepsilon_n)
 \wle C_k 
 \wle p_nk(n-k)(1+\varepsilon_n)
\]
uniformly for $k\in\cS_n$. 
By applying Lemma~\ref{lem:p_hat_concentration} conditionally on $\cJ$, we obtain 
\[
    \frac{|\widehat p-p_n|}{p_n}
    =
    \cO_{\pr}\left(
        \varepsilon_n+\frac1{\sqrt n}
    \right).
\]
Because $p_n\le1$, we have $\frac1{\sqrt n} \le \frac1{\sqrt{np_n}}$. 
We conclude using $\varepsilon_n = \cO_{\pr}\left(\frac1{\sqrt{np_n}}\right)$. 
\end{proof}

\subsection{Estimation of the dense-component density \texorpdfstring{$\xi$}{xi}}
\label{subsec:estimating_xi}

We finally estimate the internal edge density $\xi$ of the dense component.
It is convenient to write $q_0:=\xi-p_n,$ so that $q_0$ measures the excess edge density inside $V_1$ relative to the background graph.

Observe that the internal edge density $\xi$ affects the inter-arrival rates during the dense phase. Writing $D_k:=I_1(k)\bigl(n_1-I_1(k)\bigr),$ a natural reference rate is 
\begin{equation}
\label{eq:xi_cut_heuristic}
 \mu_k(q_0)
 := p_n k(n-k)+q_0D_k 
 %\weq \xi D_k + p_n\bigl(k(n-k)-D_k\bigr).
\end{equation}
Using the bound on $C_k$ established in Proposition~\ref{prop:G_npbx_cut_bound}, we prove in Lemma~\ref{lem:xi_bulk_approx} in the Appendix that $C_k=\mu_k(q_0) \left(1+o_{\pr}(1)\right)$ uniformly over the range of indices $k$ used for the estimation of $\xi$.  

To turn~\eqref{eq:xi_cut_heuristic} into an estimator of $q_0$, we also need to approximate the number $I_1(k)$ of already infected vertices in $V_1$. 
The results of Section~\ref{sec:detection} show that, away from the beginning and end of the dense phase, almost every newly infected vertex belongs to $V_1$. Hence, throughout the bulk of this phase, $I_1(k) \wapprox k-K_1+1.$ 
%Similarly, the duration of the dense phase estimates its size: $n_1\wapprox K_2^\star-K_1.$
This suggests replacing these unknown quantities by their estimated counterparts $\widehat n_1:=\widehat K_2-\widehat K_1$ and $\widehat s_k:=k-\widehat K_1+1$, and defining
\[
    \widehat D_k
    :=
    \widehat s_k(\widehat n_1-\widehat s_k).
\]
The quantity $\widehat D_k$ is therefore an approximation of the number
$D_k:=I_1(k)(n_1-I_1(k))$ 
of potential dense-component edges crossing the cut.

We do not use the entire estimated dense phase for this approximation.
Near its two endpoints, one of the factors in $D_k$ is small, so that
small localization errors in $\widehat K_1$ or $\widehat K_2$ can lead to
a comparatively large relative error in $\widehat D_k$. We therefore
trim a vanishing fraction of the estimated dense phase from each side.
Specifically, define
\begin{equation}
\label{eq:xi_feasible_trimming}
%\label{eq:xi_window}
    \widehat r_n
    :=
    \left\lceil
        \frac{\widehat n_1}{\log n}
    \right\rceil
\quad \text{ and } \quad 
    \mathcal W_\xi
    :=
    \left\{
        k:
        \widehat K_1+\widehat r_n
        \le k
        \le
        \widehat K_2-\widehat r_n
    \right\}.
\end{equation}
Under the assumptions of Theorem~\ref{thm:estimation}, $\widehat n_1=(1+o_{\mathbb P}(1))n_1$, and the proof of Proposition~\ref{prop:xi_consistency} shows that $\widehat r_n$ is asymptotically much larger than the localization error
$\cO_{\pr}( n p_n \log n )$, while still satisfying $\widehat r_n=o_{\mathbb P}(n_1)$. 
Thus, restricting to $\mathcal W_\xi$ removes the region in which endpoint uncertainty is relevant while retaining a $(1-o_{\mathbb P}(1))$ fraction of the dense phase.

We now construct an estimator of $q_0$. For $k\in\mathcal W_\xi$, define $\widehat B_k:=\widehat p_n \,k(n-k)$ and, for $q\ge0$,
\begin{equation}
\label{eq:xi_plugin_rate}
    \widehat\mu_k(q)
    :=
    \widehat B_k+q\widehat D_k.
\end{equation}
In view of~\eqref{eq:xi_cut_heuristic}, $\widehat\mu_k(q)$ is an approximation of the cut rate that would arise if the excess density were equal to $q$.

If an inter-arrival time $a_k$ were exactly exponential with rate
$\widehat\mu_k(q)$, its log-density, up to an additive constant, would be
$\log\widehat\mu_k(q)-a_k\widehat\mu_k(q).$ 
This motivates fitting $q$ by summing these exponential log-likelihood
contributions over the bulk of the dense phase. We therefore define
\begin{equation}
\label{eq:q_hat_def}
 \widehat q \ \in \
 \argmax_{0\le q\le 1-\widehat p} \sum_{k\in\mathcal W_\xi} \left[ \log\widehat\mu_k(q) - a_k\widehat\mu_k(q) \right].
\end{equation}
Finally, our estimator of the dense-component edge density is $\widehat\xi:=\widehat p+\widehat q.$ 

The likelihood in~\eqref{eq:q_hat_def} is a pseudo-likelihood: the true conditional rate is~$C_k$, rather than $\widehat\mu_k(q_0)$. The consistency proof shows that, uniformly over $k \in \cW_{\xi}$, the estimate $\widehat\mu_k(q_0)$ approximates the true cut rate $C_k$, while the number of retained observations $|\cW_{\xi}|$ is of order $n^\alpha$. This yields consistency of the resulting estimator. 

\begin{proposition}[Consistency of \texorpdfstring{\(\widehat\xi\)}{xi-hat}]
\label{prop:xi_consistency}
Assume the conditions of Theorem~\ref{thm:estimation}, and define
$\widehat r_n$, $\widehat q$, and $\widehat\xi$ by~\eqref{eq:xi_feasible_trimming}--\eqref{eq:q_hat_def}. Then $\widehat\xi-\xi=o_{\pr}(1).$
\end{proposition}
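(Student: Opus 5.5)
The plan is to treat $\widehat q$ as an M-estimator of a one-dimensional concave pseudo-log-likelihood. We first show that, on the window $\mathcal W_\xi$, the plug-in rate $\widehat\mu_k(q_0)$ matches the true conditional rate $C_k$ up to a factor $1+o_{\pr}(1)$. Consistency then follows from a uniform law of large numbers together with the identifiability that comes from concavity. Throughout we work on the high-probability event on which the following hold:
\begin{itemize}
\item the conclusions of Proposition~\ref{prop:K1_detection}, Proposition~\ref{prop:K2_detection}, Corollary~\ref{cor:p_hat_rate}, Proposition~\ref{prop:I1_in_V1_bd}(ii) and Proposition~\ref{prop:G_npbx_cut_bound};
\item $K_1\le n^{1-\alpha}\log n$, which holds by Lemma~\ref{lem:first-entry}.
\end{itemize}
Because $np_n\log n\ll n^{2\alpha-1}$, we get $\widehat n_1=n_1(1+o_{\pr}(1))$ and $\widehat r_n\asymp n_1/\log n$. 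Since $n_1/\log n\gg np_n\log n$, the trimming dominates all localization errors, and $|\mathcal W_\xi|=n_1(1-o_{\pr}(1))$.

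\textbf{Step 1 (rate approximation).} For $k\in\mathcal W_\xi$, Proposition~\ref{prop:I1_in_V1_bd}(ii) and $I_1(K_1-1+j)\le j$ give
\[
|I_1(k)-\widehat s_k|=\cO(np_n\log n+w).
\]
Since both $I_1(k)$ and $n_1-I_1(k)$ are at least of order $n_1/\log n$, it follows that $\widehat D_k=D_k(1+o_{\pr}(1))$ uniformly. Similarly, Corollary~\ref{cor:p_hat_rate} gives $\widehat B_k=p_nk(n-k)(1+o_{\pr}(1))$. Proposition~\ref{prop:G_npbx_cut_bound} then gives $C_k=\xi D_k+\cO(\sqrt{D_kM_k\log n})+\cO(U_k)$. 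On $\mathcal W_\xi$ we have $M_k\ge n_1/\log n$ and $D_k\ge n_1^2/(2\log n)$. Hence the fluctuation term is $o(D_k)$, because $\sqrt{M_k\log n/D_k}\le\sqrt{\log n\cdot 2/ n_1}\cdot\sqrt{\log n}\to0$. The term $U_k=\cO(knp_n)$ satisfies $k\,np_n\lesssim n^{\alpha}\,np_n\ll n_1^2/\log n$ by assumption. Also $\xi D_k-q_0D_k=p_nD_k$ is absorbed into the background term. Therefore
\[
\sup_{k\in\mathcal W_\xi}\bigl|C_k/\widehat\mu_k(q_0)-1\bigr|=o_{\pr}(1).
\]
This is essentially Lemma~\ref{lem:xi_bulk_approx}. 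In fact $C_k/\widehat\mu_k(q_0)\to1$ together with $\widehat\mu_k(q)\approx\widehat D_k(q+o(1))$ holds, since the background term $p_nk(n-k)\lesssim n^{\alpha}np_n=o(D_k)$.

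\textbf{Step 2 (limit criterion).} Write $L_n(q)=|\mathcal W_\xi|^{-1}\sum_{k}[\log\widehat\mu_k(q)-a_k\widehat\mu_k(q)]$ and subtract the $q$-independent term $\log\widehat D_k$. Since the background is $o(\widehat D_k)$, we have $\log\widehat\mu_k(q)-\log\widehat D_k=\log(q+\delta_k)$ with $\delta_k=o_{\pr}(1)$. Set $e_k:=C_ka_k$. Conditionally on $\mathcal J=\sigma(G,V_1,(u_k))$, the $e_k$ are i.i.d.\ $\Exp(1)$, exactly as in the proof of Corollary~\ref{cor:p_hat_rate}. We then have $a_k\widehat\mu_k(q)=e_k\,\widehat\mu_k(q)/C_k=e_k(q+\delta_k')/(\xi+o(1))$. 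Moreover $|\mathcal W_\xi|^{-1}\sum e_k\to1$, and $\max_k e_k=\cO_{\pr}(\log n)$ controls the cross terms after averaging against uniform $o(1)$ errors. So, uniformly on compact subsets of $(0,1]$,
\[
L_n(q)-\mathrm{const}\to \ell(q):=\log q-q/\xi .
\]
The limit $\ell$ is strictly concave with unique maximizer $q=\xi$. This is consistent with $\xi\approx q_0$, since $p_n\to0$.

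\textbf{Step 3 (argmax) and main obstacle.} Each summand is concave in $q$, so $L_n$ is concave, and pointwise convergence of concave functions upgrades to uniform convergence on compacts. A standard argmax argument, evaluating $L_n$ at $\xi\pm\epsilon$ and using concavity, then yields $\widehat q\to\xi$ in probability. This also covers the boundary case $\xi=1$: the constraint $q\le1-\widehat p$ still has $\xi-\widehat p$ within $o(1)$ of the feasible set. Hence $\widehat\xi=\widehat p+\widehat q\to\xi$.

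The delicate point is near $q=0$, where $\log(q+\delta_k)$ is not uniformly controlled. Concavity resolves this: $\widehat q$ can only sit below $\xi-\epsilon$ if $L_n(\xi-\epsilon)\ge L_n(\xi)$, which fails with high probability. The main obstacle I expect is Step 1, namely making the uniform relative error of $\widehat D_k$ rigorous. This requires checking that $w+np_n\log n+|\widehat n_1-n_1|$ is $o(n_1/\log n)$, using Proposition~\ref{prop:K2_detection}, and verifying that the inside fluctuation bound of Proposition~\ref{prop:G_npbx_cut_bound} is negligible on the trimmed window.
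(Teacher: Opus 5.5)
\textbf{Comparison with the paper.} Your architecture is sound and close to the paper's; the two differ mainly in packaging. You pass to the limiting normalized criterion $\ell(q)=\log q-q/\xi$ and conclude by comparing criterion values at $\xi$ and $\xi\pm\epsilon$ under concavity. The paper instead works with the score: it shows $\widehat S_n(q_0)=o_{\pr}(n_1)$ and $-\widehat\ell_n''(q)\ge c_\epsilon n_1$ for $|q-q_0|\le\epsilon$, and then uses that the score is nonincreasing. Your version makes the population objective explicit and handles $\xi=1$ cleanly; the paper's is closer to yielding a rate. Your Step~1 is essentially Lemma~\ref{lem:xi_bulk_approx}. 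It is routine and is not where the real difficulty lies.

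\textbf{The gap in Step~2.} You argue that, conditionally on $\mathcal J$, the $e_k=C_ka_k$ are i.i.d.\ $\Exp(1)$, and conclude that $|\mathcal W_\xi|^{-1}\sum_{k\in\mathcal W_\xi}e_k\to1$. This does not follow, because $\mathcal W_\xi$ is not $\mathcal J$-measurable. Its endpoints involve $\widehat K_1$, $\widehat K_2$ and $\widehat r_n$. In particular, $\widehat K_2$ is the first downward crossing of bin averages of $\widehat Z_k$, and the endpoint search starts at $\widehat K_1+\widehat s_n$, well before $\widehat K_1+\widehat r_n$. So whether the window extends past a given index is decided by the very $a_k$ with $k\in\mathcal W_\xi$. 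Restricting to $\mathcal W_\xi$ therefore breaks the conditional i.i.d.\ structure you invoke. This is exactly what the paper flags as the main difficulty.

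\textbf{How the paper handles it.} The paper introduces the oracle window $\mathcal W_\xi^\circ=\{K_1+r_n^\circ\le k\le K_2^\star-r_n^\circ\}$, whose endpoints are $\mathcal J$-measurable. It proves $|\mathcal W_\xi\triangle\mathcal W_\xi^\circ|=o_{\pr}(r_n^\circ)$ (Lemma~\ref{lem:xi_window_approx}). Each index in the symmetric difference is bounded via $\max_k C_ka_k=\cO_{\pr}(\log n)$, so the total discrepancy is $o_{\pr}(r_n^\circ\log n)=o_{\pr}(n_1)$.

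\textbf{A shorter repair for your formulation.} Since $e_k\ge0$, a sandwich suffices. From $\widehat K_1+\widehat r_n=K_1+r_n^\circ+o_{\pr}(r_n^\circ)$ and $\widehat K_2-\widehat r_n=K_2^\star-r_n^\circ+o_{\pr}(r_n^\circ)$, with high probability
\[
\{K_1+2r_n^\circ\le k\le K_2^\star-2r_n^\circ\}\subseteq\mathcal W_\xi\subseteq\{K_1+r_n^\circ/2\le k\le K_2^\star-r_n^\circ/2\}.
\]
Both outer sets are $\mathcal J$-measurable and have size $n_1(1-\cO(1/\log n))$. Conditional Chebyshev on each of them gives the law of large numbers over $\mathcal W_\xi$.

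\textbf{Cross terms.} The same superset bound gives $|\mathcal W_\xi|^{-1}\sum_{k\in\mathcal W_\xi}e_k=\cO_{\pr}(1)$. This is the right tool for the cross terms, via $\sup_k|\epsilon_k|\cdot|\mathcal W_\xi|^{-1}\sum_{k\in\mathcal W_\xi}e_k$. Bounding them instead by $\max_k e_k=\cO_{\pr}(\log n)$ times ``uniform $o(1)$ errors'' is insufficient as stated. It works only if you verify that the relative errors are $o(1/\log n)$. They are, being polynomially small under $np_n\log n\ll n^{2\alpha-1}$, but that has to be checked.
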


The proof is given in Section~\ref{sec:pf_xi_consistency}.

\subsection{Estimating the vertices in \texorpdfstring{$V_1$}{V1}}
%Given the estimators for the beginning and the end of the bulk of the $V_1$ exploration $\hat{K}_1$ and $\hat{K}_2$, an easy algorithm for the recovery of the vertices in the dense component is to return all vertices infected between $\hat{K}_1$ and $\hat{K}_2$. Theorem~\ref{thm:weak_recovery} shows that this approach satisfies asymptotic weak recovery (as in \cite{hajek_information_2016}).

 \begin{proof}[Proof of Theorem~\ref{thm:weak_recovery}]
Define the reference set $V_1^\star:=\{u_k:K_1\le k<K_2^\star\}.$ 
Because $K_2^\star-K_1=n_1=|V_1|$, we have $|V_1^\star|=|V_1|$.
Moreover, the number of vertices of $V_0$ contained in $V_1^\star$ is at most the total number of $V_0$-vertices explored between $K_1$ and the full exploration of $V_1$, namely $K_2-K_2^\star$. Hence
\[
 |V_1\triangle V_1^\star|
 \weq 2|V_1^\star\cap V_0|
 \wle 2(K_2-K_2^\star).
\]
Furthermore, because we defined $\widehat V_1=\{u_k:\widehat K_1\le k<\widehat K_2\}$, 
the symmetric difference between the estimated and reference intervals satisfies
\[
 |\widehat V_1\triangle V_1^\star|
 \wle |\widehat K_1-K_1| + |\widehat K_2-K_2^\star|.
\]
Therefore,
\[
 |V_1\triangle\widehat V_1|
 \wle 2(K_2-K_2^\star) + |\widehat K_1-K_1| + |\widehat K_2-K_2^\star|.
\]
By Propositions~\ref{prop:I1_in_V1_bd},
\ref{prop:K1_detection}, and~\ref{prop:K2_detection}, we obtain 
$|V_1\triangle\widehat V_1| = O_{\pr}(np_n\log n).$
Because both $|\widehat V_1\cap V_0|$ and $|\widehat V_1^c\cap V_1|$ are bounded by $|V_1\triangle\widehat V_1|$, the claimed bounds follow.
\end{proof}

\section{Numerical Experiments}
\label{sec:numerical_experiments}

We supplement our analysis with illustrations from experimental results on the cut size estimates and the estimation of the parameters. This gives an insight into the behavior of the processes beyond the asymptotic bounds.

\subsection{Cut Size with a Dense Component}

We simulate the cut-size process in the presence of a dense component; see Figure~\ref{fig:gnpx_planted_K1_w_closeup}. The simulations use a process whose cut-size distribution matches that of the exploration process on a $\gnpx$ graph with $n=10^6$, $p=2\log(n)/n$, $\alpha=0.7$, $\xi=0.8$ and ten repetitions. In the process, the start of the $V_1$ activation, $K_1$, is a random variable which makes it hard to plot the mean and variance of the process. Instead, we condition the process such that the first $V_1$ activation occurs at $K_1=\lfloor n^{1-\alpha}\rfloor$ to compare averages and standard deviations across runs. This conditioning is only used for illustrative purposes in Figures \ref{fig:gnpx_planted_K1_w_closeup} and \ref{fig:I1_frac}. All estimation simulations are performed on the unconditioned process.

The dense-component contribution dominates the background contribution throughout most of the interval $[K_1,K_1+n^\alpha]$. Writing $j=k-K_1+1$, the dense-component cut size is well approximated by $\xi j(n^\alpha-j)$.
Near the end of the dense-component activation, the empirical curve deviates from this approximation because the remaining cut within $V_1$ becomes comparable to the background cut, making activations outside $V_1$ more likely.

\begin{figure}[!ht]
    \centering
    \includegraphics[width=0.7\linewidth]{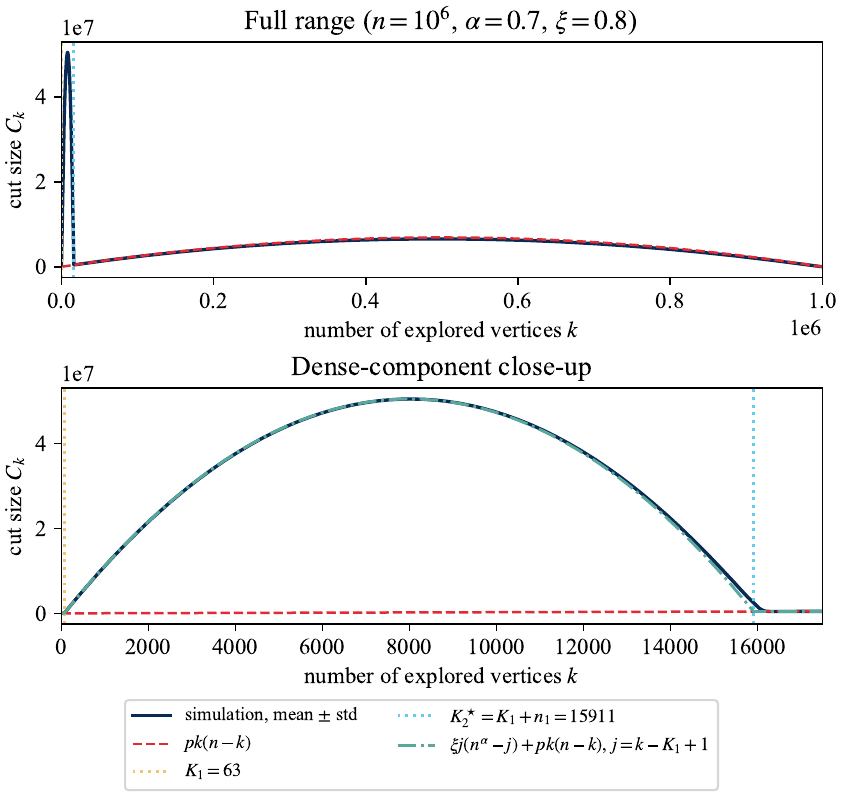}
    \caption{Cut-size simulations averaged over ten repetitions on a $\gnpx$ model with $n=10^6$, $p=2\log(n)/n$, $\alpha=0.7$, and $\xi=0.8$. The shaded region shows one standard deviation. The dense-component onset is planted at $K_1=\lfloor n^{1-\alpha}\rfloor$.}
    \label{fig:gnpx_planted_K1_w_closeup}
\end{figure}

A key quantity in the analysis is the fraction of activations that occur inside $V_1$ after $K_1$, which we bound in Proposition~\ref{prop:I1_in_V1_bd}. In Figure~\ref{fig:I1_frac} we plot the fraction of activations occurring in $V_1$, computed over windows of $100$ activations and averaged over ten repetitions, again conditioned on $K_1= \lfloor n^{1-\alpha}\rfloor$. We observe that, after $K_1$, almost all subsequent activations occur within $V_1$. However, near the end of the dense-component activation, this fraction decreases as the cut within $V_1$ becomes comparable to the background cut.

\begin{figure}[!ht]
    \centering
    \includegraphics[width=0.7\linewidth]{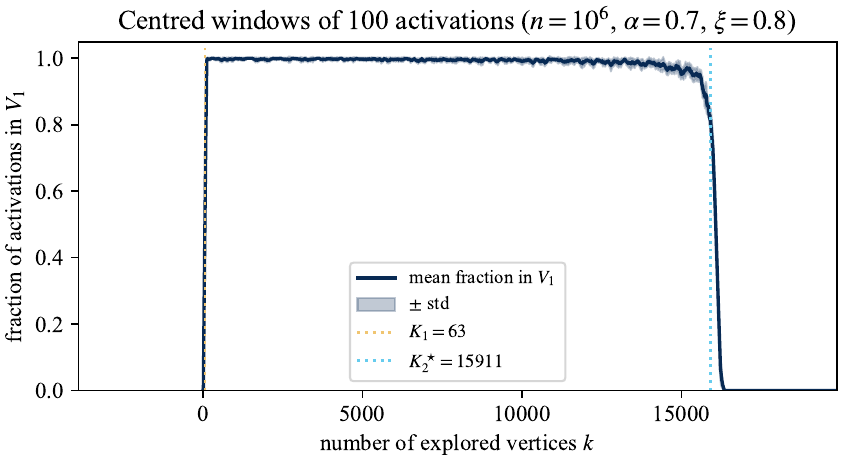}
    \caption{Fraction of activations occurring in $V_1$, computed over windows of $100$ activations. Results are averaged over ten repetitions on a $\gnpx$ model with $n=10^6$, $p=2\log(n)/n$, $\alpha=0.7$, and $\xi=0.8$. The shaded region shows one standard deviation. The dense-component onset is planted at $K_1=\lfloor n^{1-\alpha}\rfloor$.}
    \label{fig:I1_frac}
\end{figure}

\subsection{Estimation of \texorpdfstring{$K_1$}{K1} and of \texorpdfstring{$\alpha$}{alpha}}

We next evaluate the accuracy of the estimators $\hat{K}_1$ and~$\hat{\alpha}$ via Algorithm \ref{algo:k1k2_estimation}. The parameters $\kappa_1, w, H_1, H_2$ and $s_n$ were tuned for the finite-size experiments by simulating a grid of different settings $n, \alpha$ and $\xi$ and choosing a parameter configuration that minimizes $(|\hK_1- K_1| + |\hK_2 - K^\star_2|)/n_1$ defaulting to $1$ in case of failure averaged over five repetitions.
We report the estimation errors for ten cascades on ten unseen graph instances. Figure~\ref{fig:K1_beta_vs_n} shows the errors as $n$ increases with fixed $\alpha=0.6$ and $\xi=0.5$. The estimation accuracy improves as $n$ increases, consistent with concentration of the quantities used by the estimators.

\begin{figure}[ht!]
    \centering
    \includegraphics[width=0.7\linewidth]{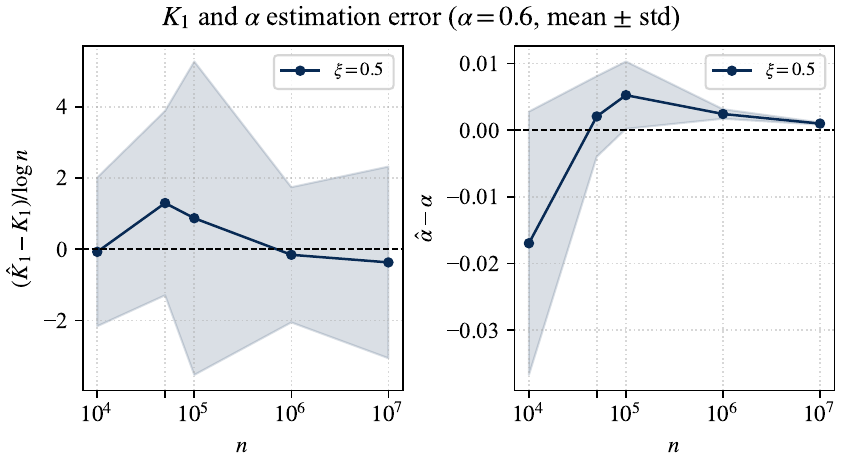}
    \caption{Estimation errors of $\hat{K}_1$ and $\hat{\alpha}$ as $n$ increases, with $\alpha=0.6$ and $\xi=0.5$, averaged over ten repetitions. The shaded region shows one standard deviation.}
    \label{fig:K1_beta_vs_n}
\end{figure}

\subsection{Estimation of \texorpdfstring{$\xi$}{xi}}
Next, we evaluate the estimator $\hat{\xi}$. We report the signed error $\hat{\xi}-\xi$. For each parameter setting, we compare two versions of the estimator: one using the previously estimated values $\hat p, \hat K_1$ and $\hat\alpha$ (using the previously tuned $\kappa_1, w, H_1, H_2$ and $s_n$), and an oracle version using the ground-truth values $p^*, K_1^*$ and $\alpha^*$. Figure~\ref{fig:xi_esti_vs_n} shows that the estimation error decreases as $n$ increases.
\begin{figure}[!ht]
    \centering
    \includegraphics[width=0.7\textwidth]{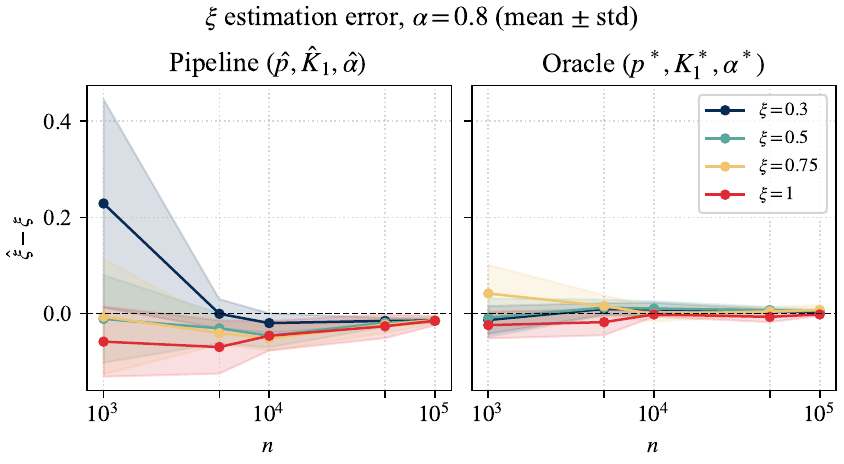}
    \caption{Signed estimation error $\hat{\xi}-\xi$ as $n$ increases, with $\alpha=0.8$, averaged over ten repetitions. The shaded region shows one standard deviation, and different colors correspond to different ground-truth values of $\xi$. The left panel uses the estimated values $\hat p, \hat K_1$ and $\hat\alpha$, while the right panel uses the ground-truth values $p^*, K_1^*$ and $\alpha^*$.}
    \label{fig:xi_esti_vs_n}
\end{figure}

\subsection{Recovery of the dense component \texorpdfstring{$V_1$}{V1}}
Finally, we show in Figure~\ref{fig:v1_recovery} the recovery error of the dense component $V_1$. We show the mean number of false positive (FP) and the false negative (FN) divided by $n_1$ for a graph with $10^7$ vertices and $\alpha = 0.6$ again with the tuned parameters $\kappa_1, w, H_1, H_2$ and $s_n$. We also plot one standard deviation in the shaded region over ten repetitions. 
The simulations show both error rates decreasing as $\xi$ becomes larger as the signal of the dense component becomes more pronounced with higher density.

\begin{figure}[!ht]
    \centering
    \includegraphics[width=0.7\textwidth]{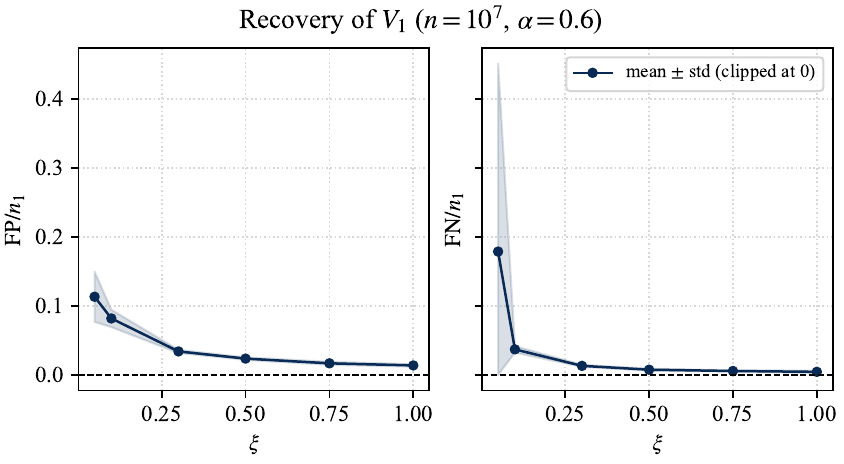}
    \caption{False positives (FP) and false negatives (FN) divided by $n_1$ in the recovery of the vertices in $V_1$. Experiments use $n=10^7$ and $\alpha=0.6$, averaged over ten repetitions. The shaded region shows one standard deviation.}
    \label{fig:v1_recovery}
\end{figure}

\section{Proofs of Section~\ref{sec:exploration_curve}}
\label{sec:exploration_curve_proofs}

\subsection{Proof of Proposition~\ref{prop:homogeneous-cut-concentration}}

\begin{proof}[Proof of Proposition~\ref{prop:homogeneous-cut-concentration}]
Condition on the source $u_1$. As $d_n := np_n \to\infty$, we have $\delta_n:= \sqrt{\frac{24\log(e d_n)}{d_n}}<1$ for all sufficiently large $n$. 

Fix a deterministic set $S$ containing $u_1$ such that $k := |S|\le n/2$, and let $\mu_k=p_nk(n-k)$. 
Define the two events
\[
    A_S:=\{G[S]\text{ is connected}\}
    \quad \text{ and } \quad 
    B_S:=\bigl\{
        \bigl||\operatorname{Cut}(S)|-\mu_k\bigr|
        >\delta_n\mu_k
    \bigr\}.
\]
The event $A_S$ depends only on edges internal to $S$, whereas
$B_S$ depends only on edges between $S$ and its complement.
These edge families are disjoint and $S$ was chosen independent of $G$, so $A_S$ and $B_S$ are independent.

There are $k(n-k)$ possible crossing edges, each present independently with probability~$p_n$. Thus $|\operatorname{Cut}(S)|\sim\Bin(k(n-k),p_n)$.
The multiplicative Chernoff bound, together with $\mu_k\ge d_n k/2$, gives
\[
 \pr(B_S)
 \wle 2\exp\left(-\frac{\delta_n^2\mu_k}{3}\right)
 \wle 2\exp\left(-\frac{\delta_n^2 d_n k}{6}\right).
\]

For $k\ge2$, connectivity requires the presence of a spanning tree.
Rooting such a tree at $u_1$ assigns a parent to each of the other
$k-1$ vertices, so there are at most $k^{k-1}$ possible trees.
Each tree has $k-1$ edges and is present with probability $p_n^{k-1}$.
A union bound therefore yields $\pr(A_S)\le k^{k-1}p_n^{k-1}.$ 

By independence of $A_S$ and $B_S$, we obtain
\[
 \pr(A_S\cap B_S)
 \wle 2k^{k-1}p_n^{k-1} \exp\left(-\frac{\delta_n^2 d_n k}{6}\right).
\]

There are $\binom{n-1}{k-1}$ sets of size $k$ containing $u_1$, and
\[
 \binom{n-1}{k-1}k^{k-1}p_n^{k-1}
 \wle (ed_n)^{k-1} \left(\frac{k}{k-1}\right)^{k-1}
 \wle e(ed_n)^{k-1}.
\]
Hence, a union bound over all these sets and their sizes gives
\begin{align*}
 \pr\left( \bigcup_{k=2}^{\lfloor n/2\rfloor} \ \bigcup_{\substack{S \colon u_1 \in S, \\ |S|=k}} (A_S\cap B_S) \right) 
 & \wle 2e\sum_{k=2}^{\lfloor n/2\rfloor} (ed_n)^{k-1} \exp\left(-\frac{\delta_n^2d_nk}{6}\right)\\
 & \wle 2e\sum_{k=2}^{\infty}(ed_n)^{-3k-1}
 \weq o(1),
\end{align*}
where the second inequality uses $\delta_n^2 d_n/6=4\log(ed_n)$.
For $k=1$, the only set is $\{u_1\}$, and the same Chernoff bound gives $\pr(B_{\{u_1\}})\le2(ed_n)^{-4}=o(1)$.

It follows that, with probability $1-o(1)$, every connected set $S$ containing $u_1$ and with $|S|\le n/2$ satisfies
\[
 \left| \frac{|\operatorname{Cut}(S)|}{p_n|S|(n-|S|)}-1 \right|
 \wle \delta_n.
\]
We conclude as $\cI_k$ contains $u_1$ and has cardinality $k \le n/2$.  
\end{proof}

\subsection{Proof of Proposition~\ref{prop:I1_in_V1_bd}}
\label{sec:proof_I1}

The key ingredient for proving Proposition~\ref{prop:I1_in_V1_bd} is to bound, at each step $k\ge K_1$, the conditional probability that the next visited vertex lies in $V_1$. 
Because the edge weights are independent and exponentially distributed, 
%By the memorylessness of the edge weights,
\begin{equation}\label{eq:pruj_defn}
    \pr(u_{K_1+j}\in V_1\mid \cF_{K_1+j-1})
    \weq \frac{ |\{(v,s)\in \mathrm{Cut}(\cI_{K_1+j-1}) : v \in \cI_{K_1+j-1}, s\in V_1\}| }{C_{K_1+j-1}},
\end{equation}
i.e., the fraction of cut edges leading into $V_1$.  

\begin{lemma}[Lower bound on the conditional probability of activating a $V_1$-vertex]
\label{lm:lower-bound-prujF}
Suppose $\alpha \in (0,1)$ and $n p_n \gg 1$. 
 There exist constants \(c,C_0,C_1>0\) and an event \(\mathcal E\), with
\(\mathbb P(\mathcal E^c)\le C_0n^{-c}\), such that, on \(\mathcal E\), for every \(k\le n/2\) with \(t_k<\infty\) and \(1\le i:=I_1(k)\le n_1-1\),
\[
\mathbb P(u_{k+1}\in V_1\mid\mathcal F_k)
\ge
\left(1+C_1\frac{p_nk(n-k)+\log n}{i(n_1-i)}\right)^{-1}
\ge
1-C_1\frac{p_nk(n-k)+\log n}{i(n_1-i)}.
\]
\begin{comment}OLD VERSION (2026/10/01)
Suppose $\alpha>1/2$ and $pn\log n = o(n^\alpha)$. 
Let $K_f:=\inf\{k\ge 1: I_1(k) = n_1\}$ be the index of the last explored vertex in $V_1$. 
For every fixed constant $A>0$, there exist constants $c,C,C''>0$ and an event $\mathcal E$ with $\pr(\mathcal E^c)\le C n^{-c}$
such that, on $\mathcal E$, for every
\TODO{add details on $\cE$}
\[
 1\le j < \min \left\{ n^\alpha+A pn\log n, K_f -K_1 +1 \right\},
\]
writing $k:=K_1+j-1$ and $i:=I_1(k)$, we have $1\le i\le n^\alpha-1$ and
\[
\prujF
\wge \frac{1}{1+C''\frac{pk(n-k)+\log n}{i(n^\alpha-i)}}
\wge 1-C''\frac{pk(n-k)+\log n}{i(n^\alpha-i)}.
\]
\end{comment}
\end{lemma}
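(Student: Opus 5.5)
The conditional probability in \eqref{eq:pruj_defn} equals $X_k/(X_k+Y_k)$, where $X_k$ is the number of cut edges from $\cI_k$ into $V_1\setminus\cI_k$ and $Y_k$ is the number of cut edges into $V_0\setminus\cI_k$. It therefore suffices to find an event $\mathcal E$ of probability $1-\bigO(n^{-c})$ on which, simultaneously for all relevant $k$,
\[
X_k \wge c_1\, i(n_1-i) \quad\text{ and }\quad Y_k \wle C_2\bigl(p_nk(n-k)+\log n\bigr),
\]
with $i=I_1(k)$. Then $X_k/(X_k+Y_k)=(1+Y_k/X_k)^{-1}$ gives the first inequality with $C_1=C_2/c_1$. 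The second inequality is $(1+x)^{-1}\ge 1-x$.

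\emph{Upper bound on $Y_k$.} Every edge counted in $Y_k$ is a cut edge of the exploration set whose both endpoints are not both in $V_1$. All such pairs have probability $p_n$. I would reuse the scheme of the proof of Proposition~\ref{prop:homogeneous-cut-concentration}. For a deterministic set $S\ni u_1$ of size $k$, the count $Y(S)$ of $G$-edges from $S$ to $V_0\setminus S$ is $\Bin(\le k(n-k),p_n)$ and is independent of the connectivity of $G[S]$. This time the connectivity bound $k^{k-1}$ must use that edges inside $V_1$ have probability $\xi$, not $p_n$. To handle this, I would not union bound over all sets. Instead, I would use the coupling from Lemma~\ref{lem:first-entry}: the graph is $H\sim G(n,p_n)$ plus independent extra edges inside $V_1$. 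The edges between $S$ and $V_0$ are edges of $H$ only, so $Y(S)$ is dominated by $|\Cut_H(S)|$. The requirement is then handled through the stochastic upper bound $C_k\stleq\Bin(k(n-k),p_n)$ shown in the Appendix, adapted to count only $V_0$-directed edges. Concretely, I would argue that when the next vertex is revealed, the edges from it to unexplored $V_0$ vertices are still unrevealed $\mathrm{Bernoulli}(p_n)$. So $Y_k$ is dominated by a sum of at most $k(n-k)$ independent Bernoulli$(p_n)$ variables, possibly after the size-bias is removed by the domination argument. A Bernstein bound with deviation $\sqrt{6\mu\log n}+\log n\le C(\mu+\log n)$, together with a union bound over $k\le n/2$, then gives failure probability $n^{-2}$. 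This matches the term $U_k$ of Proposition~\ref{prop:G_npbx_cut_bound}, which I would simply invoke: the $V_0$ part of the cut is bounded by $U_k\le C(p_nkn+\log n)$, using $\sqrt{ab}\le a+b$.

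\emph{Lower bound on $X_k$.} I would invoke Proposition~\ref{prop:G_npbx_cut_bound}, which gives $X_k\ge \xi D_k-\Delta_k^{\rm in}$ with $D_k=i(n_1-i)$. The difficulty is that when $M_k=\min(i,n_1-i)$ is small, $\Delta^{\rm in}_k\approx\sqrt{2\xi(1-\xi)D_kM_k\log n_1}$ can exceed $\xi D_k/2$. This happens only when $D_k\lesssim M_k\log n$, i.e.\ roughly when $M_k\lesssim\log n$. In that regime I would instead use the crude bound $X_k\ge 1$, which holds whenever $i\le n_1-1$ and the remaining $V_1$ vertices are reachable. Better, I would use a direct bound: on a high-probability event, every vertex of $V_1$ has at least $\xi n_1/2$ neighbours in $V_1$, and $\min$-degree bounds on $G[V_1]$ give $X_k\ge c\,M_k\,n_1\ge c' D_k$ whenever $M_k\le \varepsilon n_1$. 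In the symmetric middle range $M_k\ge\varepsilon n_1$, Proposition~\ref{prop:G_npbx_cut_bound} with fixed $\eta$ gives $X_k\ge \xi D_k/2$ with probability $1-\bigO(n_1^{-\eta^2})$. Since $n_1=n^\alpha$, this is polynomially small.

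The main obstacle is the small-$M_k$ regime, specifically $n_1-i$ small. Here a degree argument gives only about $\xi(n_1-i)\cdot i$ edges in expectation for a fixed set, and the exploration set is adaptive. I would handle this by a union bound over the complement $R=V_1\setminus\cI_k$ of size $r$. The event that some $R$ with $|R|=r$ has fewer than $\xi r(n_1-r)/2$ edges to $V_1\setminus R$ has probability at most $\binom{n_1}{r}e^{-c\xi r n_1}\le e^{r\log n_1-c r n_1}$. This is tiny, and symmetrically the same holds for small $i$. Combining the three regimes gives $X_k\ge c_1 D_k$ on $\mathcal E$ for all $k$, completing the plan.
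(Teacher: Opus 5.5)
Your proposal is correct and follows essentially the paper's route. The paper bounds the $V_0$-directed part of the cut by the term $U_k$ from the proof of Proposition~\ref{prop:G_npbx_cut_bound}. It bounds the internal part below by uniform dense-cut concentration on $G[V_1]$: it applies Lemma~\ref{lm:dense_cut_conc} directly to get $|\Cut_{G[V_1]}(S)|\ge\frac{\xi}{2}|S|(n_1-|S|)$ for every $S\subseteq V_1$, which is what your union bound over complements reproduces.

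Your three-regime case analysis is unnecessary, however. Since $D_k=M_k\max\{i,n_1-i\}\ge M_kn_1/2$, one has $\Delta_k^{\rm in}/D_k\le(1+\eta)\sqrt{4\xi(1-\xi)\log(en_1)/n_1}=o(1)$ uniformly in $k$. So the small-$M_k$ ``obstacle'' you describe never occurs: you attribute it to $M_k\lesssim\log n$, whereas it would require $\max\{i,n_1-i\}\lesssim\log n$, which is impossible.
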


The proof of Lemma~\ref{lm:lower-bound-prujF} is technical and is provided in Appendix~\ref{app:cond_prob_V1}.

\begin{proof}[Proof of Proposition~\ref{prop:I1_in_V1_bd}]
Let $H = \{K_1\le n^{1-\alpha}\log n\}$ and write $\pr_H(\cdot):=\pr(\cdot\mid H)$ and $\E_H[\cdot]:=\E[\cdot\mid H]$. 
As~$H$ is $\cF_{K_1}$-measurable, conditioning on $H$ preserves the conditional transition probabilities after $K_1$, that is, $\pr_H(u_{k+1}\in V_1\mid\cF_k) = \pr(u_{k+1}\in V_1\mid\cF_k)$ for all $k\ge K_1$.

Let $\mathcal E$ be the event on which the uniform cut bounds used in Lemma~\ref{lm:lower-bound-prujF} hold. 
Because $\pr(H)=1-o(1)$, these bounds give $\pr_H(\mathcal E^c)\le C_0n^{-c_0}$ for some constants $C_0,c_0>0$. 
Let $k\le n/2$ and $i:=I_1(k)\in\{1,\ldots,n_1-1\}$. On $\cE$, we have 
\begin{equation}
\label{eq:dense-transition-bound}
 \pr(u_{k+1}\in V_1\mid\cF_k)
 \wge \left( 1+C_1\frac{d_nk+\log n}{i(n_1-i)} \right)^{-1}.
\end{equation}
This also ensures that the cut is positive while $V_1$ remains partially explored.

\medskip
\noindent
\textit{(i) Early regime.}
Set $m:=m_n$. Before the first outside exploration, when exactly
$i$ vertices of $V_1$ have been explored, we have $k=K_1+i-1$.
For $1\le i<m$, using $m=o(n_1)$, on $\cE$ Equation~\eqref{eq:dense-transition-bound} therefore gives
\[
 \pr(u_{K_1+i}\in V_0\mid\cF_{K_1+i-1})
 \wle C\left( \frac{d_nK_1+\log n}{in_1} + \frac{d_n}{n_1} \right). 
\]

For the auxiliary exploration that restarts uniformly whenever its cut vanishes, the first-entry index $\widetilde K_1$ satisfies $\E[\widetilde K_1]=\frac{n+1}{n_1+1}.$\footnote{Indeed, in a $\gnp$, label symmetry and uniform restarts make the exploration order a uniform random permutation. Therefore \(\widetilde K_1\) has the distribution of the position of the first vertex of \(V_1\) in such a permutation. The \(n-n_1\) other vertices occupy \(n_1+1\) exchangeable gaps: before, between, and after the vertices of \(V_1\). Each gap has expected size \((n-n_1)/(n_1+1)\). Because \(\widetilde K_1\) is one plus the first gap’s size, we obtain
$\E[\widetilde K_1] = 1+\frac{n-n_1}{n_1+1} = \frac{n+1}{n_1+1}.$}
Moreover, $\widetilde K_1=K_1$ on $H$. Hence,
\[
 \E_H[K_1]
 \wle \frac{\E[\widetilde K_1]}{\pr(H)}
 \weq \cO(n/n_1).
\]

For $1\le i<m$, let $A_i$ be the event that the first $i$ explorations starting at $K_1$ are finite and belong to $V_1$, and that the cut bounds hold at step $K_1+i-1$.
Then $A_i$ is $\cF_{K_1+i-1}$-measurable. Using the preceding bound on the probability of exploring $V_0$ next, we obtain
\begin{align*}
 \pr_H\bigl(A_i\cap\{u_{K_1+i}\in V_0\}\bigr) 
 & \weq \E_H\left[ \mathbf1_{A_i} \pr(u_{K_1+i}\in V_0\mid\cF_{K_1+i-1}) \right] \\
 &\wle C\,\E_H\left[ \mathbf1_{A_i} \left( \frac{d_nK_1+\log n}{in_1} + \frac{d_n}{n_1} \right) \right] \\
 &\wle C\left( \frac{d_n\E_H[K_1]+\log n}{in_1} + \frac{d_n}{n_1} \right).
\end{align*}
For sufficiently large $n$, we have $m<n_1$.
On $\mathcal E$, extinction is impossible while the explored vertices of $V_1$ number between $1$ and $m-1$.
Thus, if the first $m$ explorations starting at $K_1$ are not all finite and in $V_1$, then either $\mathcal E$ fails or a first outside exploration occurs after $i$ consecutive vertices of $V_1$, for some $1\le i<m$.
A union bound therefore gives
\begin{align}
 &\pr_H\bigl( \text{the first $m$ explorations starting at $K_1$ are not all finite and in $V_1$ }
    \bigr) \nonumber
    \\
    &\quad\le
    \pr_H(\mathcal E^c)
    +\sum_{i=1}^{m-1}
      \pr_H\bigl(A_i\cap\{u_{K_1+i}\in V_0\}\bigr) \nonumber
    \\
    &\quad\le
    \pr_H(\mathcal E^c)
    +C\left[
        \frac{d_n\E_H[K_1]+\log n}{n_1}\log(m+1)
        +\frac{d_nm}{n_1}
    \right] \nonumber \\
    & \wle C_0n^{-c_0}
    +C\left[
        \left(\frac{d_nn}{n_1^2}+\frac{\log n}{n_1}\right)
        \log(m+1)
        +\frac{d_nm}{n_1}
    \right], \label{eq:in_proof}
\end{align}
where we used $\sum_{i=1}^{m-1}i^{-1}\le C\log(m+1)$, and $\E_H[K_1]=O(n/n_1)$ and
$\pr_H(\mathcal E^c)\le C_0n^{-c_0}$.

The right-hand side of~\eqref{eq:in_proof} tends to zero, because $\frac{d_nn}{n_1^2}\log(m+1) =O\left(\frac{d_n\log n}{n^{2\alpha-1}}\right)=o(1)$, $\frac{\log n\log(m+1)}{n_1}=o(1)$ and $\frac{d_nm}{n_1}=o(1).$ 
This proves (i). 

\medskip
\noindent
\textit{(ii) Bulk regime.}
Let $S$ be the total number of outside explorations after $K_1$
and before completing $V_1$, with $S=\infty$ if completion never
occurs. Fix a constant $A>0$, to be chosen sufficiently large,
and set
$M:=\lceil A d_n\log n\rceil.$ 
Stop the comparison at completion of $V_1$, the first failure of
a current cut bound, or the $(M+1)$-st outside exploration.
Before stopping,
\[
 k \wle K_1+n_1+M \wle 3n_1 \ < \ n/2
\]
for all sufficiently large $n$.
Thus \eqref{eq:dense-transition-bound} gives the lower success
probabilities
\[
    q_i:=
    \left(
        1+L\frac{d_nn_1}{i(n_1-i)}
    \right)^{-1},
    \qquad 1\le i\le n_1-1,
\]
where $L>0$ can be chosen independently of the fixed constant $A$.

Using fresh independent uniform sequences for the successive trials in each stage $i$, couple the exploration with independent variables
$\widetilde G_i\sim\operatorname{Geom}(q_i)-1.$ 
Up to stopping, the number of outside explorations in each stage, including any partially completed stage, is at most~$\widetilde G_i$.
On $\mathcal E$, extinction before completion is impossible.
Therefore, reaching $M+1$ outside explorations forces $\sum_i\widetilde G_i>M$, and hence
\[
    \pr_H(S>M)
    \wle
    C_0n^{-c_0}
    +
    \pr\left(
        \sum_{i=1}^{n_1-1}\widetilde G_i>M
    \right).
\]
Moreover, 
\[
 \E \left[ \sum_i \tG_i \right]
\weq \sum_{i=1}^{n_1-1}\frac{1-q_i}{q_i}
 \weq Ld_nn_1\sum_{i=1}^{n_1-1}\frac1{i(n_1-i)}
 \weq 2Ld_n\sum_{i=1}^{n_1-1}\frac1i
 \wasymp d_n\log n,
\]
where we used $\frac1{i(n_1-i)} = \frac1{n_1}\left(\frac1i+\frac1{n_1-i}\right).$
Therefore, Lemma~\ref{lm:geom_sum_concentration} applied with $q_\ast:=\min_{1\le i\le n_1-1}q_i\ge\frac{c_1}{d_n}$ and with $A$ sufficiently large, gives $\pr\left( \sum_{i=1}^{n_1-1}\widetilde G_i>M \right)
 \le n^{-c_2}$ for some $c_2>0$.
After adjusting constants, we obtain
\[
    \pr_H(S\le Cd_n\log n) \wge 1-n^{-c}
\]
for some $c,C>0$. 
On this event, before completion of $V_1$,
\[
    I_1(K_1-1+j)\wge j-S\wge j-Cd_n\log n.
\]
After completion, $I_1=n_1$. 
This proves the uniform bound in (ii).
Finally, completion occurs at index $K_1+n_1-1+S$, so $K_2-K_2^\star = S \le Cd_n\log n$.
\end{proof}

\subsection{Proof of Proposition~\ref{prop:G_npbx_cut_bound}}

\begin{proof}[Proof of Proposition~\ref{prop:G_npbx_cut_bound}]
Recall that $G[V_1]$ is the subgraph induced by $V_1$.  For $S\subseteq V_1$, define the set of edges of the induced subgraph $G[V_1]$ crossing from $S$ to $V_1\setminus S$ by 
\[
 \Cut_{G[V_1]}(S)
 := \bigl\{ \{u,v\}\in E(G[V_1]) \colon u\in S,\ v\in V_1\setminus S \bigr\}. 
\]

We decompose the cut $C_k = | \Cut(\cI_k) |$ after $k$ explored vertices as 
$C_k \weq C_k^{\mathrm{in}}+C_k^{\mathrm{out}},$ 
where $C_k^{\mathrm{in}} := \left| \Cut_{G[V_1]}(I_k\cap V_1) \right|$  is the contribution of cut edges internal to $V_1$, and $C_k^{\mathrm{out}}:=C_k-C_k^{\mathrm{in}}$ contains all remaining cut edges.

\medskip\noindent
\textit{(i) Internal dense contribution.}
The induced graph $G[V_1]$ is distributed as $G(n_1,\xi)$.

If $\xi<1$, then, because $\xi$ is fixed, we have $\xi(1-\xi)n_1\gg\log n_1,$ and the assumptions of Lemma~\ref{lm:dense_cut_conc} hold for sufficiently large $n$. 
Applying that lemma to $G[V_1]$, there exists a constant $A_\eta<\infty$ such that, with probability at least $1-A_\eta n_1^{-\eta^2}$, simultaneously for every subset $S\subseteq V_1$, $1\leq |S|\leq n_1-1$,
\begin{align*}
 &\left| \Cut_{G[V_1]}(S) - \xi |S|(n_1-|S|) \right| \\
 & \wle (1+\eta)  \sqrt{ 2\xi(1-\xi) |S|(n_1-|S|) \min\{|S|, n_1-|S|\}\log\left( \frac{e n_1}{\min\{|S|, n_1-|S|\}} \right) },
\end{align*}

As this bound holds simultaneously for all subsets of $V_1$, we can apply it to the random set $S=\Ik\cap V_1$. 
Moreover, observe that, for $M_k\ge1$, we have $\log\left(\frac{e n_1}{M_k}\right) \wle \log(en_1).$ 
Thus, we obtain, simultaneously for all $k$,
\begin{align}
\label{eq:in_proof_cut_all_phase}
 \left| C_k^{\mathrm{in}}-\xi D_k \right| \wle \Delta_k^{\mathrm{in}}.
\end{align} 

If $\xi=1$, the graph induced by $V_1$ is complete, so $C_k^{\mathrm{in}}=D_k$ and the bound~\eqref{eq:in_proof_cut_all_phase} holds with $\Delta_k^{\mathrm{in}}=0$.

Denote by $\mathcal E_{\mathrm{in}}$ the event $ \left| C_k^{\mathrm{in}}-\xi D_k \right| \wle \Delta_k^{\mathrm{in}}$. We just established that 
$\pr(\mathcal E_{\mathrm{in}}^c)
 \le A_\eta n_1^{-\eta^2}.$

\smallskip \noindent
\textit{(ii) Sparse contribution.}
We condition on $V_1$ and let $X_i$ be the random variable counting the number of edges from $u_i$ to $\cI_i^c$ excluding pairs contained in $V_1$. 
Conditioned on the exploration history just before $u_i$ is explored, the edge-indicators from $u_i$ to $\cI_i^c$ but excluding pairs contained in $V_1$ are independent $\Ber(p_n)$ variables, and there are at most $n$ of them. 
Hence, $X_i \le Z_i$ where $Z_i \sim \Bin(n,p_n)$.
Moreover, because $Z_i$ involves only edges incident to~$u_i$, and because the vertices $u_1,\ldots,u_k$ are distinct, the random variables $Z_1,\ldots,Z_k$ can be constructed to be mutually independent, and thus $\sum_{i=1}^k Z_i \sim \Bin(kn,p_n)$.
Because every edge in $C_k^{\mathrm{out}}$ was first counted in some $X_i$ with $i \le k$, we obtain 
\[
    C_k^{\mathrm{out}} \ \stleq \ \sum_{i=1}^k X_i \ \stleq \ \sum_{i=1}^k Z_i \wsim \Bin( kn, p_n).
\]
Bernstein's inequality gives $\pr(C_k^{\mathrm{out}} > p_nkn + \sqrt{6p_nkn\log n} + \log n) \le n^{-3}$. A union bound over $1\le k\le n/2$ therefore yields an event $\mathcal E_{\mathrm{out}}$ for which $C_k^{\mathrm{out}} \le U_k$ for all $1\le k\le n/2$, and such that $\pr(\mathcal E_{\mathrm{out}}^c) \le \frac{1}{2}n^{-2}$.  

\medskip\noindent 
\textit{Conclusion.} We conclude by considering the event $\mathcal E_{\mathrm{in}}\cap\mathcal E_{\mathrm{out}}$.  
\end{proof}

%% file: appendix.tex
\section{Deferred proofs from Section~\ref{sec:exploration_curve}}

\subsection{Stochastic upper-bound for \texorpdfstring{$C_k$}{Ck}}
\label{subsec:stoc-upper-bound-cut}
\begin{lemma}[Stochastic upper bound]
\label{lemma:homogeneous-cut-upper}
Consider the sequence $(C_1,\cdots,C_n)$ of cut-sizes of a continuous FPP process with rate $\lambda=1$ in a $\gnp$ graph. 
For every $1\le k\le n-1$,  
\[
    C_k \ \stleq \ \Bin(k(n-k),p_n).
\]
\end{lemma}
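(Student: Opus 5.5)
The plan is to reduce the claim to Harris's inequality (FKG for product measures) by writing the conditional law of the current cut edges, given the exploration order, as a product Bernoulli measure tilted by a \emph{decreasing} function.

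\textbf{Step 1 (marginalize the times).} Fix $k\le n-1$ and a possible ordered history $(u_1,\ldots,u_k)$. Integrate out the exploration times; only the order of the first $k$ explored vertices remains. By memorylessness, the $j$-th transition selects a uniformly random cut edge. Hence, given the graph, the probability of the order is
\[
\frac1n\prod_{j=1}^{k-1}\frac{N_j}{C_j},
\]
where $N_j$ is the number of edges from $u_{j+1}$ into $\cI_j$ and $C_j=|\Cut(\cI_j)|$. If the cut empties before step $k$, the convention $C_k=0$ gives the bound trivially.

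\textbf{Step 2 (split the edge variables).} Partition the potential edges into three independent families:
\begin{itemize}
\item[(a)] pairs inside $\cI_k=\{u_1,\ldots,u_k\}$;
\item[(b)] pairs between $\cI_k$ and $\cI_k^c$, which are exactly the $k(n-k)$ potential cut edges at step $k$;
\item[(c)] pairs inside $\cI_k^c$, which do not enter the likelihood.
\end{itemize}
Each $N_j$ depends only on family (a). Each $C_j$, for $j<k$, is a sum of an (a)-measurable part, namely edges from $\cI_j$ to $\cI_k\setminus\cI_j$, and the number of edges from $\cI_j$ to $\cI_k^c$. The latter is a nondecreasing function of the family (b) indicators.

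Condition additionally on family (a). The conditional law of the family (b) indicators $\omega\in\{0,1\}^{k(n-k)}$ given the order and family (a) is then
\[
\pi(\omega)\;\propto\;\Ber(p_n)^{\otimes k(n-k)}(\omega)\cdot f(\omega),
\qquad f(\omega):=\prod_{j=1}^{k-1}\frac{1}{C_j(\omega)}.
\]
Here $f$ is a nonincreasing function of $\omega$. We restrict to histories of positive probability, so $C_j\ge1$ and $f$ is well defined; nonincreasing holds coordinatewise.

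\textbf{Step 3 (Harris inequality).} Let $\mu=\Ber(p_n)^{\otimes k(n-k)}$. For every nondecreasing $g$, Harris's inequality gives
\[
\E_\pi[g]=\frac{\E_\mu[gf]}{\E_\mu[f]}\le \E_\mu[g].
\]
Take $g=\mathbf 1\{\sum\omega\ge t\}$. Since $C_k=\sum\omega$ exactly, this yields
\[
\pr\bigl(C_k\ge t\mid \text{order},\ \text{family (a)}\bigr)\le \pr\bigl(\Bin(k(n-k),p_n)\ge t\bigr).
\]
Averaging over family (a) and over all orders gives the claim $C_k\stleq\Bin(k(n-k),p_n)$.

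\textbf{Main obstacle.} The delicate point is Step 2. We must check that the likelihood factorizes so that \emph{all} dependence on the cut edges at step $k$ enters only through the denominators $C_j$, which are increasing in them. In particular, the numerators $N_j$ and the uniform source factor must not involve family (b).

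This is exactly where the size bias is absorbed. Picking vertices with many explored neighbours only enters through the numerators, which live in family (a). The remaining effect on the future cut is the factor $1/C_j$, which penalizes large cuts. If I instead conditioned on the exploration times, I would get factors $C_je^{-C_ja_j}$, which are not monotone. This is why the times are integrated out first.
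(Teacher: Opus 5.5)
Your proof is correct, and it takes a genuinely different route from the paper's. The paper builds an explicit coupling with an auxiliary exploration that picks each next vertex uniformly among the unexplored ones, independently of the graph. It tracks the sorted explored-neighbour counts $x_1\le\cdots\le x_m$ (FPP) and $y_1\le\cdots\le y_m$ (auxiliary) of the unexplored vertices. Because the $x_i$ are sorted, the size-biased rank $R$ chosen by FPP stochastically dominates the uniform rank $U$. The paper couples $R\ge U$, so deleting $x_R$ and $y_U$ preserves $x_i\le y_i$, and then adds the same fresh Bernoulli indicators to both profiles.

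You instead integrate out the times and write the conditional law of the crossing pairs, given the order and the internal edges, as the product measure tilted by $\prod_{j<k}C_j^{-1}$; Harris' inequality then finishes. The points you flag all check out: each $N_j$ involves only pairs inside the explored set; $C_j\ge N_j\ge 1$, so the tilt is well defined and coordinatewise nonincreasing; and $C_k$ is exactly the number of open crossing pairs.

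Your route yields a stronger conditional statement. Given the order and the internal edges, the entire crossing-edge configuration is dominated by i.i.d.\ $\mathrm{Bernoulli}(p_n)$ variables on all increasing events, so every increasing functional of the cut edges is controlled, not just their number. It also avoids the rank bookkeeping. The paper's coupling, in turn, is fully elementary and realizes $C_k\le C_k^+$ simultaneously for all $k$ on one probability space. That dominates the whole path of cut sizes by that of a graph-independent exploration, whereas your argument gives the domination for each fixed $k$ separately, which is all the lemma claims.
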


\begin{proof} 
Compare the FPP exploration with an auxiliary exploration on $G(n,p)$ that selects each next vertex uniformly from the unexplored vertices, independently of the graph. 
Its explored set $\mathcal J_k$ has size $k$ and is independent of the graph, so its cut size satisfies
\[
 C_k^+:=|\operatorname{Cut}(\mathcal J_k)|
 \sim \Bin(k(n-k),p).
\]
We couple the two explorations by revealing edges only when one of
their endpoints is explored.

After $k$ steps, let $x_1\le\cdots\le x_m$ and $y_1\le\cdots\le y_m$, where $m=n-k$, be the ordered numbers of explored neighbors of the remaining vertices in the FPP and auxiliary explorations, respectively. 
Observe that $C_k =\sum_{i=1}^m x_i$ and $C^+_k =\sum_{i=1}^m y_i.$

It remains to be shown that there exists a coupling such that $x_i\le y_i$ for every $i$.
Initially, couple these sequences to be equal. Suppose inductively
that $x_i\le y_i$ for every $i$ and that $\sum_i x_i>0$.
The FPP exploration selects a rank $R$ with probabilities proportional
to the $x_i$, whereas the auxiliary exploration selects a uniform
rank $U$. Since the $x_i$ are nondecreasing,
\[
    \pr(R\le r\mid x_1,\ldots,x_m)
    =\frac{\sum_{i=1}^r x_i}{\sum_{i=1}^m x_i}
    \le \frac{r}{m},
    \qquad 1\le r\le m.
\]
Thus $R$ and $U$ can be coupled so that $R\ge U$.

Deleting $x_R$ and $y_U$ preserves the coordinatewise ordering
of the remaining ordered sequences. Indeed, between the two deleted
ranks, the new comparison is $x_i\le y_{i+1}$; elsewhere the original
comparisons remain valid. The edges from each newly selected vertex
to its remaining unexplored vertices have not yet been revealed.
Their indicators are fresh independent Bernoulli$(p)$ variables.
Couple these indicators coordinatewise in the two explorations.
Adding the same indicators and then reordering preserves the
coordinatewise inequality.

Induction gives $C_k\le C_k^+$ until the FPP exploration stops.
Afterward, $C_k=0\le C_k^+$ by convention. The claimed stochastic
domination follows.
\end{proof}

\subsection{Cut Size in a Dense \texorpdfstring{$G_{n,p}$}{Gnp} Graph}
%Corollary~\ref{cor:cut_conc_local} studies the cut size in a sparse $\gnp$. 
In the dense component of the graph, higher edge density simplifies the description of the cut size, because a concentration bound can be derived uniformly over all sets of vertices. 
We use the following relatively standard result. 

\begin{lemma}[Uniform Concentration of Cuts in Dense $\gnp$]
\label{lm:dense_cut_conc}
Let $G \sim \gnp$, let $\eta>0$, and assume that either
$p(1-p)n \ge C_\eta \log(en)$, with $C_\eta:=\frac{(1+\eta)^2(1+\eta^2)^2}{\eta^2}$, or $p=1$. 
For $S \subseteq V(G)$, let $m(S):=\min\{|S|,n-|S|\}$. Then, with probability at least $1 - 2\sum_{k=1}^{\lfloor n/2\rfloor}\binom{n}{k}^{-\eta^2}$, we have simultaneously for all $S\subseteq V$ that
\[
\left| \Cut(S) - p|S|(n-|S|) \right|
\wle (1+\eta)\sqrt{ 2p(1-p)|S|(n-|S|) \log\binom{n}{m(S)}}.
\]
Moreover, for every fixed $\eta>0$, there is a constant $A_\eta<\infty$ such that $2\sum_{k=1}^{\lfloor n/2\rfloor}\binom{n}{k}^{-\eta^2} < A_{\eta} n^{-\eta^2}$. 
\end{lemma}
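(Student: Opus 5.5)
The natural route is the classical first-moment argument over all subsets, made sharp by a union bound that is stratified by the size of the smaller side. When $p=1$ the graph is complete, so $\Cut(S)=|S|(n-|S|)$ deterministically and the right-hand side vanishes. There is nothing to prove in that case, and we assume $p<1$ from now on.

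Fix $S$ with $m:=m(S)=k\le n/2$ (using $\Cut(S)=\Cut(S^c)$ we may take $|S|=k$). Then $\Cut(S)\sim\Bin(N,p)$ with $N:=k(n-k)$ and variance $\sigma^2:=Np(1-p)$. Set $L:=\log\binom{n}{k}$ and $t:=(1+\eta)\sqrt{2\sigma^2 L}$. Bernstein's inequality for centered bounded Bernoulli sums gives
\[
\pr\bigl(|\Cut(S)-Np|>t\bigr)\le 2\exp\Bigl(-\frac{t^2}{2(\sigma^2+t/3)}\Bigr)=2\exp\Bigl(-\frac{(1+\eta)^2L}{1+t/(3\sigma^2)}\Bigr).
\]
The goal is to show that $(1+\eta)^2/(1+t/(3\sigma^2))\ge 1+\eta^2$. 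Granting this, each such set fails with probability at most $2\binom{n}{k}^{-(1+\eta^2)}$. A union bound over the $\binom nk$ sets of size $k$, and over $k\le n/2$ (the complements are covered by symmetry), then gives total failure probability at most $2\sum_{k\le n/2}\binom nk^{-\eta^2}$, which is exactly the stated bound.

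\textbf{Main obstacle: controlling $t/\sigma^2$.} We have
\[
\frac{t}{\sigma^2}=(1+\eta)\sqrt{\frac{2L}{\sigma^2}},\qquad L\le k\log(en/k)\le k\log(en),\qquad \sigma^2\ge \tfrac{kn}{2}\,p(1-p).
\]
Hence $L/\sigma^2\le 2\log(en)/(p(1-p)n)\le 2/C_\eta$ under the hypothesis. The required inequality is $1+t/(3\sigma^2)\le (1+\eta)^2/(1+\eta^2)$, i.e.\ $t/(3\sigma^2)\le 2\eta/(1+\eta^2)$. With the bound just obtained, this reduces to
\[
(1+\eta)\cdot\frac{2}{3}\cdot\frac{1}{\sqrt{C_\eta}}\lesssim\frac{2\eta}{1+\eta^2},
\]
and $C_\eta=(1+\eta)^2(1+\eta^2)^2/\eta^2$ is precisely what makes this hold, with room to spare for the factor $2/3$ and the factor $2$ inside the square root. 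I expect the constant bookkeeping here to be the only delicate point. If it falls short by a constant factor, I would use the sharper estimate $L\le k\log(en/k)$ together with $n-k\ge n/2$, or the Bennett form of the inequality, to absorb the discrepancy.

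\textbf{Bounding the tail sum.} For fixed $\eta$, the $k=1$ term of $2\sum_{k=1}^{\lfloor n/2\rfloor}\binom nk^{-\eta^2}$ equals $2n^{-\eta^2}$. For $k\ge2$ we have $\binom nk\ge (n/k)^k$, and split:
\begin{itemize}
\item For $2\le k\le K_\eta$, with $K_\eta$ a constant chosen so that $K_\eta\eta^2>2\eta^2+1$, the bound $\binom nk\ge\binom n2\ge n^2/4$ shows these finitely many terms are $O(n^{-2\eta^2})$.
\item For $K_\eta<k\le\sqrt n$, we have $\binom nk\ge n^{k/2}$, so each term is at most $n^{-k\eta^2/2}$. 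This is a geometric series in $k$, and its sum is $O(n^{-\eta^2})$ by the choice of $K_\eta$.
\item For $k>\sqrt n$, we have $\binom nk\ge 2^{\sqrt n}$, so the at most $n$ remaining terms together contribute $n\,2^{-\eta^2\sqrt n}=o(n^{-\eta^2})$.
\end{itemize}
Combining the three ranges gives $2\sum_k\binom nk^{-\eta^2}\le A_\eta n^{-\eta^2}$.
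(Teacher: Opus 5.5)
Your proposal is correct and follows the paper's proof essentially step for step. Both reduce to $|S|\le n/2$ via $\Cut(S)=\Cut(V\setminus S)$, apply Bernstein with $t=(1+\eta)\sqrt{2\sigma^2 L}$, and use $t/(3\sigma^2)\le \frac{2(1+\eta)}{3\sqrt{C_\eta}}=\frac{2\eta}{3(1+\eta^2)}\le\frac{2\eta}{1+\eta^2}$, so your constants close with a factor $3$ to spare and no fallback is needed; a union bound and a split of the tail sum near $k\approx\sqrt n$ then finish. Your explicit handling of $p=1$, where the Bernstein step degenerates, and your finer three-range split of the sum are small gains in care over the paper's write-up.
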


\begin{remark}
Suppose $p=\Theta(1)$. Then, using $|S|(n-|S|)\ge \frac{mn}{2}$ and $\log\binom{n}{m}\le m\log\left(\frac{en}{m}\right)$ where $n = m(S)$, the lemma implies that, for $n$ large enough, we have 
\[
\forall S\subseteq V(G) \colon \qquad
\frac{\left| \Cut(S)-p|S|(n-|S|)\right|}{p|S|(n-|S|)}
\wle
2(1+\eta)\sqrt{ \frac{1-p}{p} \frac{\log(en/m)}{n}} 
\weq \cO\left( \sqrt{\frac{\log n}{n}} \right)
\]
with probability at least $1-A_\eta n^{-\eta^2}$. 
\end{remark}

\begin{proof}
Since $\Cut(S)=\Cut(V\setminus S)$, it is enough to consider $S\subseteq V$ with $|S|\le n/2$. 
Fix such a set and let $k:=|S|$. Then for a graph generated independently of $S$, 
\[
 \Cut(S) \sim \mathrm{Bin}(k(n-k),p).
\]
Set $\mu_k:=pk(n-k)$, $\sigma_k^2:=p(1-p)k(n-k)$, and $ L_k:=\log\binom{n}{k}$. 
Bernstein's inequality gives, for every $t>0$,
\[
\pr\left( |\Cut(S)-\mu_k|\ge t \right)
\wle 2\exp\left(-\frac{t^2}{2\sigma_k^2+2t/3} \right).
\]
We will use this inequality with $t_k := (1+\eta)\sqrt{2\sigma_k^2 L_k} 
 %\weq(1+\eta)\sqrt{2p(1-p)k(n-k)\log\binom{n}{k}}.
$. 
Then
\begin{equation}\label{eq:bernstein_prob}
    \frac{t_k^2}{2\sigma_k^2+2t_k/3}
\weq
\frac{(1+\eta)^2L_k}{1+t_k/(3\sigma_k^2)}.
\end{equation}

We now bound the extra Bernstein denominator uniformly in $k$. Using $k\le n/2$, we have
$ n-k\ge \frac n2$, and therefore
$\sigma_k^2
= p_n(1-p_n)k(n-k)
\ge \frac12 p_n(1-p_n)kn.$
Also,
$L_k = \log\binom{n}{k}
 \le k\log\left(\frac{en}{k}\right)
 \le k\log(en).
$ 
Hence
\begin{align*}
%\label{eq:in_proof_bernstein_term_1}
\frac{t_k}{3\sigma_k^2}
%&= \frac{1+\eta}{3} \sqrt{\frac{2L_k}{\sigma_k^2}} \\
%& \wle \frac{1+\eta}{3}\sqrt{ \frac{2k\log(en)} {p(1-p)k(n/2)}} \\
\wle \frac{2(1+\eta)}{3} \sqrt{ \frac{\log(en)}{p_n(1-p_n)n} }
\wle \frac{2\eta}{1+\eta^2}, 
\end{align*}
where the last inequality uses $p_n(1-p_n)n \ge C_\eta \log(en)$ and the definition of~$C_\eta$. 
Hence, 
\begin{align}
\label{eq:in_proof_bernstein_term_2}
1+\frac{t_k}{3\sigma_k^2}
\wle 1+\frac{2\eta}{1+\eta^2}
\weq \frac{(1+\eta)^2}{1+\eta^2}.
\end{align}
Therefore, combining~\eqref{eq:bernstein_prob} and~\eqref{eq:in_proof_bernstein_term_2}, we obtain 
$
\frac{t_k^2}{2\sigma_k^2+2t_k/3}
%\weq \frac{(1+\eta)^2L_k}{1+t_k/(3\sigma_k^2)}
\ge (1+\eta^2)L_k.
$
Thus, for each fixed $S$ of size $k\le n/2$,
\[
\pr\left( \left| \Cut(S)-\mu_k \right| \ge t_k \right) 
\wle 2e^{-(1+\eta^2)L_k} 
\weq 2\binom{n}{k}^{-(1+\eta^2)}.
\]
Taking a union bound over all subsets of size $k$ and then over $1\le k\le n/2$ gives
\begin{align*}
\pr\left( \exists S\subseteq V,\ |S|\le n/2 \colon |\Cut(S)-\mu_{|S|}|\ge t_{|S|} \right)
 \wle \sum_{k=1}^{\lfloor n/2\rfloor} \binom{n}{k} 2\binom{n}{k}^{-(1+\eta^2)} 
 \weq  2\sum_{k=1}^{\lfloor n/2\rfloor} \binom{n}{k}^{-\eta^2}.
\end{align*}
This proves the first claimed probability estimate.

It remains only to prove that the last sum involving $\binom{n}{k}^{-\eta^2}$ is polynomially small. Indeed, for every fixed $\eta>0$ there exists a constant $A_\eta<\infty$ such that, for all sufficiently large $n$,
\begin{align}
\label{eq:in_proof_binomial_sum_bound}
 2\sum_{k=1}^{\lfloor n/2\rfloor} \binom{n}{k}^{-\eta^2} \wle A_\eta n^{-\eta^2}
\end{align}
We show this by splitting the sum into $k\le \sqrt n$ and $k>\sqrt n$. For $1\le k\le \sqrt n$, using $\binom{n}{k}\ge (n/k)^k$, we get $\binom{n}{k}^{-\eta^2} \le \left(\frac{k}{n}\right)^{\eta^2 k}$. 
The $k=1$ term is $n^{-\eta^2}$, and the remaining terms are of smaller order than a constant multiple of $n^{-\eta^2}$ for fixed $\eta$. For $k>\sqrt n$, again using $\binom{n}{k}\ge (n/k)^k$ and $k\le n/2$, we have
\[
        \binom{n}{k}^{-\eta^2}
        \le 2^{-\eta^2 k}
        \le 2^{-\eta^2\sqrt n},
\]
so the contribution of these terms is exponentially small in $\sqrt n$, and this establishes~\eqref{eq:in_proof_binomial_sum_bound}. 
\end{proof}

\subsection{Proof of Lemma~\ref{lm:lower-bound-prujF}}
\label{app:cond_prob_V1}

 \begin{proof}[Proof of Lemma~\ref{lm:lower-bound-prujF}]
Denote by $C_k^{\mathrm{in}}$ and $C_k^{\mathrm{out}}$ the numbers of cut edges whose unexplored endpoints belong to $V_1$ and $V_0$, respectively. 
Let $\cE_{\mathrm{dense}}$ be the event that
\[
 \bigl|\Cut_{G[V_1]}(S)\bigr|
 \wge \frac{\xi}{2}|S|(n_1-|S|)
 \qquad\text{for every }S\subseteq V_1.
\]
As $\xi>0$ is fixed and $n_1=\lfloor n^\alpha\rfloor$, Lemma~\ref{lm:dense_cut_conc} implies, for all sufficiently large $n$,
\[
    \pr(\mathcal E_{\mathrm{dense}}^c)\le Cn^{-c}
\]
for some constants $c,C>0$.
When $\xi=1$, this event holds deterministically.

The sparse-cut bound established in the proof of
Proposition~\ref{prop:G_npbx_cut_bound} gives an event
$\mathcal E_{\mathrm{out}}$, with
$\pr(\mathcal E_{\mathrm{out}}^c)\le \frac12 n^{-2}$,
on which, simultaneously for every $k\le n/2$ with $t_k<\infty$,
\[
 C_k^{\mathrm{out}}
 \wle U_k :=p_nkn+\sqrt{6p_nkn\log n}+\log n.
\]
Indeed, $C_k^{\mathrm{out}}$ is bounded by the full sparse contribution considered there.

Set $\cE := \cE_{\mathrm{dense}}\cap\cE_{\mathrm{out}}.$ 
After adjusting constants, we have $\pr(\cE^c)\le C_0n^{-c}$.

We now work on $\cE$, and fix $k\le n/2$ with $t_k<\infty$ and $1\le i:=I_1(k)\le n_1-1$. Every internal dense edge crossing from $I_k\cap V_1$ to $V_1\setminus I_k$ contributes to $C_k^{\mathrm{in}}$.
Hence
\begin{equation}
\label{eq:in_proof_lowerbound_Ckin}
 C_k^{\mathrm{in}}
 \wge \bigl|\Cut_{G[V_1]}(I_k\cap V_1)\bigr|
 \wge \frac{\xi}{2}i(n_1-i) \ > \ 0.
\end{equation}
Moreover, using $k\le n/2$ and $\sqrt{6x\log n}\le x+\frac32\log n$, we obtain
\begin{equation}
\label{eq:in_proof_lowerbound_Ckout}
 C_k^{\mathrm{out}}
 \wle 2p_nkn+\frac52\log n
 \wle 4\bigl(p_nk(n-k)+\log n\bigr).
\end{equation}

By the memorylessness of the exponential exploration times, the next exploration edge is uniform among the current edges belonging to the cut.
Combining \eqref{eq:in_proof_lowerbound_Ckin} and \eqref{eq:in_proof_lowerbound_Ckout}, we therefore obtain
\begin{align*}
 \pr(u_{k+1}\in V_1\mid\cF_k)
 \weq \frac{C_k^{\mathrm{in}}}{C_k^{\mathrm{in}}+C_k^{\mathrm{out}}}
 & \wge \left( 1+\frac{8}{\xi} \frac{p_nk(n-k)+\log n}{i(n_1-i)} \right)^{-1}\\
 & \wge 1-\frac{8}{\xi} \frac{p_nk(n-k)+\log n}{i(n_1-i)},
\end{align*}
where the last inequality uses $(1+x)^{-1}\ge1-x$ for $x\ge0$.
All bounds hold simultaneously over the stated range of $k$,
which proves the lemma.
\end{proof}

\subsection{Proof of Lemma~\ref{lem:post_dense_cut}}
\begin{proof}[Proof of Lemma~\ref{lem:post_dense_cut}] 
The two statements (i) and (ii) rely on the same observation. 
Condition on the planted set $V_1$, and fix a deterministic set $S\subseteq V$ such that $V_1\subseteq S$. 
Because all vertices of $V_1$ belong to $S$, the edges belonging to $\Cut(S)$ each have at most one endpoint in $V_1$. 
Therefore, all potential edges between vertices in $S$ and vertices in $V\setminus S$ are independent $\Ber(p_n)$ variables, and 
\begin{equation}
\label{eq:post_dense_fixed_set_cut}
 |\Cut(S)| \sim \Bin\bigl(|S|(n-|S|),p_n\bigr).
\end{equation}
To prove (i) and (ii), we use this fact over two different families of sets.

\medskip \noindent \emph{Proof of (i).}
By Lemma~\ref{lem:first-entry} and Proposition~\ref{prop:I1_in_V1_bd}, there exists a constant $C>0$
such that the event
$\mathcal E_n := \left\{ K_1\le n^{1-\alpha}\log n \text{ and } \ 0\le K_2-K_2^\star\le Cnp_n\log n \right\}$ satisfies $\pr(\mathcal E_n)=1-o(1)$.

We work on this event $\mathcal E_n$. 
As $K_2$ is the first exploration index after all vertices of $V_1$ have been explored, we have $V_1\subseteq\mathcal I_k$ for every $k \ge K_2$. 
Moreover, for $K_2 \le k \le K_2+\lceil Bnp_n\log n\rceil,$ using $K_2^\star=K_1+n_1$ and the definition of $\cE_n$, we obtain 
\begin{align*}
 |\mathcal I_k\setminus V_1|
 \weq k-n_1
 \wle K_2-n_1+\lceil Bnp_n\log n\rceil 
 \wle m_n, 
\end{align*}
where we defined $m_n := \left\lceil n^{1-\alpha}\log n + (B+C+1)np_n\log n \right\rceil.$
Because $\alpha>1/2$ and $np_n\log n=o(n_1)$, we have $m_n=o(n_1)$. 

We introduce 
\[
 \cS_n := \left\{ S\subseteq V\colon V_1\subseteq S \text{ and } |S\setminus V_1|\le m_n \right\}.
\]
For any $S\in\cS_n$, because $m_n=o(n_1)$ and $n_1=o(n)$, we have $|S|=(1+o(1))n_1$ and $n-|S|=(1-o(1))n$, uniformly in $S$. Hence, writing $\mu_S:=p_n|S|(n-|S|),$ we have
\begin{equation}
\label{eq:post_dense_local_mean}
 \inf_{S\in\cS_n}\mu_S \wge c(np_n)n_1
\end{equation}
for some constant $c>0$ and all sufficiently large $n$.

Next, we upper-bound the size of $\cS_n$. 
Because only vertices of $V_0$ may be added to $V_1$, we have  $|\cS_n| \le \sum_{j=0}^{m_n}\binom{n}{j},$ 
and hence, for some constant $C'>0$,
$\log|\cS_n| \le C'm_n\log\left(\frac{en}{m_n}\right).$ 
Using the definition of $m_n$,
\[
    m_n\log\left(\frac{en}{m_n}\right)
    =
    O\left(
        n^{1-\alpha}\log^2 n
        +
        np_n\log^2 n
    \right).
\]
Dividing by $(np_n)n_1\asymp(np_n)n^\alpha$, and using $\alpha>1/2$ and $np_n\gg 1$ gives
\[
 \frac{ n^{1-\alpha}\log^2 n }{ (np_n)n^\alpha }
 = \frac{ n^{1-2\alpha}\log^2 n }{ np_n }
 =o(1)
 \quad \text{and} \quad 
 \frac{ np_n\log^2 n }{ (np_n)n^\alpha }
 = \frac{\log^2 n}{n^\alpha}
 =o(1).
\]
Therefore, 
\begin{equation}
\label{eq:local_post_dense_entropy}
 \log|\cS_n|
 \weq o\bigl((np_n)n_1\bigr).
\end{equation}

Choose a deterministic strictly positive and vanishing sequence $\varepsilon_n = o(1)$ such that $\varepsilon_n^2(np_n)n_1 \gg \log|\cS_n|.$ 
For every fixed $S\in\cS_n$, Chernoff bounds give
\[
 \pr\left( \bigg| |\Cut(S)|-\mu_S \bigg| > \varepsilon_n\mu_S \right)
 \wle 2\exp\left\{ -c\varepsilon_n^2\mu_S \right\}
\]
for some constant $c>0$. 
Using~\eqref{eq:post_dense_local_mean}, \eqref{eq:local_post_dense_entropy}, and a union bound over $S\in\cS_n$, yields
\[
 \pr\left( \sup_{S\in\cS_n} \left| \frac{|\Cut(S)|}{p_n|S|(n-|S|)} - 1 \right| > \varepsilon_n \right)
 \weq o(1).
\]

Finally, on the event~$\cE_n$, every exploration set $\mathcal I_k$ with $K_2\le k\le K_2+\lceil Bnp_n\log n\rceil$ belongs to $\cS_n$. Hence, taking $S=\mathcal I_k$ in the preceding uniform estimate therefore proves 
\[
 \sup_{ K_2\le k\le K_2+\lceil Bnp_n\log n\rceil } \left| \frac{C_k}{p_nk(n-k)}-1 \right|
 \weq o_{\pr}(1). 
\]

\medskip \noindent 
\emph{Proof of (ii).}
Because $K_1=o_{\pr}(n)$, $n_1=o(n)$, and $np_n\log n=o(n_1)$, we have 
\[
    K_2
    =
    K_2^\star+\cO_{\pr}(np_n\log n)
    =
    K_1+n_1+\cO_{\pr}(np_n\log n)
    =
    o_{\pr}(n),
\]
Hence $\pr(K_2<c_1n) = 1-o(1).$
On this event $\{ K_2 < c_1 n \}$, we have 
\[
    V_1\subseteq\mathcal I_k
    \qquad
    \text{for all }
    \lceil c_1n\rceil\le k\le\lfloor c_2n\rfloor.
\]

Consider
\[
 \cS_n := \left\{ S\subseteq V \colon V_1\subseteq S \text{ and } c_1n\le |S|\le c_2n \right\}.
\]
For every $S\in\cS_n$, \eqref{eq:post_dense_fixed_set_cut} holds and
\[
 \mu_S := p_n|S|(n-|S|)
 \wge c_1(1-c_2)p_nn^2
 \weq c_1(1-c_2)n(np_n).
\]
Moreover, $|\cS_n|\le2^n.$ 
Let $\varepsilon_n:=\frac{M}{\sqrt{np_n}},$ where $M>0$ is a fixed constant. 
Because $np_n\to\infty$, we have $\varepsilon_n<1$ for all sufficiently large $n$. 
Again by Chernoff bounds,
\begin{align*}
\pr\left( \sup_{S\in\cS_n} \left| \frac{|\Cut(S)|}{p_n|S|(n-|S|)}-1 \right| > \frac{M}{\sqrt{np_n}} \right)
& \wle 2^n\, 2\exp\left\{ - c' \frac{M^2}{np_n} c_1(1-c_2)n(np_n) \right\} \\
& \weq 2\exp\left\{ n\left( \log2-c'c_1(1-c_2)M^2 \right) \right\}.
\end{align*}
Choosing $M$ sufficiently large makes the last expression tend to
zero exponentially fast.  Therefore
\[
 \sup_{S\in\cS_n} \left| \frac{|\Cut(S)|}{p_n|S|(n-|S|)}-1 \right|
 \weq \cO_{\pr}\left(\frac{1}{\sqrt{np_n}}\right).
\]
We conclude by noticing that $\cI_k\in\cS_n$ for every $c_1n\le k\le c_2n$, a.a.s. 
\end{proof}

\subsection{Additional Lemma: Concentration of sums of geometric r.v.}
\begin{lemma}[Concentration for sums of geometric random variables]
\label{lm:geom_sum_concentration}
Let \(G_1,\ldots,G_m\) be independent random variables with $G_i\sim \operatorname{Geom}(q_i)-1$, where \(q_i\in(0,1]\). Let $q_*:=\min_{i\in[m]}q_i$ and $\mu:=\sum_{i=1}^m\frac{1-q_i}{q_i}$. 
Then, for every \(M>3/2\), 
\[ 
 \pr\left( \sum_{i=1}^m G_i\ge M\mu \right)
 \wle \exp\left( -\frac{(2M-3)}{12}q_*\mu \right).
\]
\end{lemma}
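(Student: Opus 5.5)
We prove the lemma with an exponential moment (Chernoff) bound. Write $S:=\sum_{i=1}^m G_i$.

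\emph{Moment generating function.} For $G\sim\operatorname{Geom}(q)-1$ we have $\pr(G=j)=q(1-q)^j$ for $j\ge0$. Hence, for $0<\theta<-\log(1-q)$,
\[
\E[e^{\theta G}]=\frac{q}{1-(1-q)e^\theta}.
\]
Taking logarithms,
\[
\log \E[e^{\theta G}]=-\log\bigl(1-\tfrac{1-q}{q}(e^\theta-1)\bigr).
\]
Set $x:=\frac{1-q}{q}(e^\theta-1)$. We restrict to $\theta\le q_*/2$. Then $e^\theta-1\le\theta e^{\theta}$ and $1-q\le 1$, so $x\le \tfrac{1}{2}e^{1/2}\cdot\tfrac{q_*}{q}\cdot\ldots$. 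It is cleaner to use directly that $x\le \frac{(1-q)}{q}\,\theta(1+\theta)$ for $\theta\le 1/2$. This gives $x\le \tfrac34$ once $\theta\le q_*/2$, because $\frac{1-q}{q}\theta\le \frac{1}{2}$.

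We then use $-\log(1-x)\le x+\frac{x^2}{2(1-x)}$, or more crudely $-\log(1-x)\le x/(1-x)$. Summing over $i$ gives
\[
\log\E[e^{\theta S}]\le \mu\,\theta\,c(\theta,q_*),
\]
where $c(\theta,q_*)$ is a multiplicative factor of the form $1+O(\theta/q_*)$. The aim is a bound like $\log\E e^{\theta S}\le \mu\theta\bigl(1+\tfrac{3\theta}{2q_*}\bigr)$, or a similar explicit constant, valid for $\theta\le q_*/2$.

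\emph{Chernoff step.} Markov's inequality gives
\[
\pr(S\ge M\mu)\le \exp\bigl(-\theta M\mu+\log\E e^{\theta S}\bigr)\le\exp\bigl(-\theta\mu(M-1-O(\theta/q_*))\bigr).
\]
Choose $\theta=q_*/c_0$ for a fixed constant $c_0$, for example $\theta=q_*/4$. If the correction factor is at most $1/2$, the exponent becomes $-\frac{q_*}{4}\mu(M-\frac32)$. This equals $-\frac{(2M-3)}{8}q_*\mu$, which is stronger than the claimed $\frac{2M-3}{12}$. The condition $M>3/2$ appears exactly as the requirement that $M-1$ exceed this correction.

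The main obstacle is purely constant bookkeeping: bounding $-\log(1-x)$ with $x=\frac{1-q}{q}(e^\theta-1)$ uniformly in $i$. At the chosen $\theta$, the ratio $(1-q_i)\theta/q_i\le\theta/q_*$ is at most $1/4$, so $x\le \tfrac14 e^{1/4}<0.33$. Then $-\log(1-x)\le x(1+0.6x)$ and $e^\theta-1\le\theta(1+\theta)$, and these combine to keep the correction below $1/2$. If the constants do not quite close at $\theta=q_*/4$, I would take $\theta=q_*/6$. That choice yields exactly the factor $\frac{2M-3}{12}$ in the statement, with ample slack.
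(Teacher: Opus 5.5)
Your argument is essentially the paper's: both use a Chernoff bound with the exact geometric MGF $q_i/(1-(1-q_i)e^{\lambda})$, a Chernoff parameter proportional to $q_*$, and $-\log(1-x_i)\le x_i/(1-x_i)$. The paper takes $\lambda=\log(1+q_*/5)$, so that $e^{\lambda}-1=q_*/5$ exactly, and then uses $\log(1+y)\ge y/(1+y)$; this gives the same constant $(2M-3)/12$.

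It is your fallback $\theta=q_*/6$ that actually closes the proof. With $e^{\theta}-1\le\theta(1+\theta)$ one gets $x_i\le 7/36$ and $\log\E e^{\theta S}\le\frac{42}{29}\theta\mu$, and $\frac{42}{29}<\frac32$. By contrast, the inequality $-\log(1-x)\le x(1+0.6x)$ that you invoke for $\theta=q_*/4$ is false on the range you state (e.g.\ at $x=0.3$).
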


\begin{proof}
If \(\mu=0\), then \(q_i=1\) for every \(i\), and hence \(G_i=0\) almost surely for every \(i\). The claim is then trivial.
We may therefore assume that \(\mu>0\).  
Recall that, for \(G_i\sim \operatorname{Geom}(q_i)-1\), 
\[
 \E\left[e^{\lambda G_i}\right]
 \weq \sum_{k=0}^\infty q_i(1-q_i)^k e^{\lambda k}
 \weq \frac{q_i}{1-(1-q_i)e^\lambda},
\]
provided $(1-q_i)e^\lambda<1$ or, equivalently, whenever $\lambda<-\log(1-q_i)$.

Let $0 < \lambda < -\log(1-q_*)$. 
By Chernoff's bound and independence,
\[
 \pr\left(\sum_{i=1}^m G_i\ge M\mu\right)
 \wle e^{-\lambda M\mu} \prod_{i=1}^m \E e^{\lambda G_i} 
 \weq e^{-\lambda M\mu} \prod_{i=1}^m \frac{q_i}{1-(1-q_i)e^\lambda}.
\]
Rewrite each factor as
\[
 \frac{q_i}{1-(1-q_i)e^\lambda} \weq \frac{1}{1-x_i}
 \qquad \text{ where }     x_i:=\frac{(1-q_i)(e^\lambda-1)}{q_i}.
\]
Let $\lambda = \log(1+\theta q_*)$ where $\theta\in(0,1)$. Observe that this choice of $\lambda$ is admissible, since 
$
 e^{\lambda} \weq 1+\theta q_* \wle 1+q_* \wle \frac{1}{1-q_*}. 
$ 
With this choice of $\lambda$, we get
\begin{align*}
 x_i \weq \frac{1-q_i}{q_i} \theta q_* \wle \theta(1-q_i) \wle \theta. 
\end{align*}
Thus,
\begin{align*}
 \log \frac{1}{1-x_i} 
 \wle \frac{x_i}{1-x_i}
 \wle \frac{x_i}{1-\theta}
 \weq \frac{\theta q_*}{1-\theta} \frac{1-q_i}{q_i}. 
\end{align*}
Because $\mu = \sum_{i=1}^m \frac{1-q_i}{q_i}$, we obtain 
\begin{align*}
 \pr\left(\sum_{i=1}^m G_i\ge M\mu\right)
 & \wle \exp\left( - \lambda M \mu + \frac{\theta}{1-\theta} q_* \mu \right).
\end{align*}
Using $\log(1+x) \ge x/(1+x)$ valid for $x \ge 0$, we finally obtain 
\begin{align*}
 \pr\left(\sum_{i=1}^m G_i\ge M\mu\right)
 & \wle \exp\left( - \left( \frac{M\theta}{1+\theta} - \frac{\theta}{1-\theta} \right) q_* \mu  \right).
\end{align*}
We conclude by taking $\theta=1/5$. 
\end{proof}

\section{Additional Lemmas for Section~\ref{sec:detection}}

\subsection{Auxiliary Lemmas for Section~\ref{subsubsec:detecting_K1}}

 \begin{lemma}[Concentration of bin averages]
\label{lemma:FP_bound}
Let $(\cF_k)_{k\ge1}$ be a filtration, and let $Z_1,\ldots,Z_\kappa$ be random variables such that $Z_k$ is $\cF_{k+1}$-measurable and takes values in $[-\sqrt3,\sqrt3]$.
Write $b_k:=\E[Z_k\mid\cF_k]$.
Fix an integer $1\le w\le\kappa$, and let $S$ be a stopping time taking values in $\{1,\ldots,\kappa-w+1\}$. Define $\overline Z_S:=\frac1w\sum_{r=0}^{w-1}Z_{S+r}.$ 
For any event $\mathcal E$, the following hold.
\begin{enumerate}
    \item[(i)]
    If, on $\mathcal E$, we have $b_{S+r}\le\delta$ for every
    $0\le r<w$, then, for every $H>\delta$,
    \[
        \pr\bigl(\mathcal E\cap\{\overline Z_S>H\}\bigr)
        \le
        \exp\left\{-\frac{w(H-\delta)^2}{6}\right\}.
    \]

    \item[(ii)]
    If, on $\mathcal E$, we have $b_{S+r}\ge z$ for every
    $0\le r<w$, then, for every $H<z$,
    \[
        \pr\bigl(\mathcal E\cap\{\overline Z_S\le H\}\bigr)
        \le
        \exp\left\{-\frac{w(z-H)^2}{6}\right\}.
    \]
\end{enumerate}
In particular, if $b_k\le\delta$ almost surely for every
$1\le k\le\kappa$, then, for the averages $\overline Z_j :=\frac1w\sum_{k=(j-1)w+1}^{jw}Z_k$ on deterministic bins, we have, for every $H>\delta$,
\[
    \pr\left(
        \max_{1\le j\le\lfloor\kappa/w\rfloor}
        \overline Z_j>H
    \right)
    \le
    \left\lfloor\frac{\kappa}{w}\right\rfloor
    \exp\left\{-\frac{w(H-\delta)^2}{6}\right\}.
\]
\end{lemma}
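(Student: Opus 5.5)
The plan is to use a supermartingale (Azuma–Hoeffding/Freedman) argument for the centered increments $Z_{S+r}-b_{S+r}$, with the stopping time $S$ handled by optional sampling. Set $\xi_r:=Z_{S+r}-b_{S+r}$ for $0\le r<w$ and the shifted filtration $\mathcal G_r:=\cF_{S+r}$. This is a legitimate filtration because $S$ is a stopping time with bounded values.

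The first step is to show that $\xi_r$ is $\mathcal G_{r+1}$-measurable and that $\E[\xi_r\mid\mathcal G_r]=0$. Both facts follow by decomposing on $\{S=s\}\in\cF_s\subseteq\cF_{s+r}$. On this event $\E[Z_{S+r}\mid\cF_{S+r}]=\E[Z_{s+r}\mid\cF_{s+r}]=b_{s+r}$. Hence $(\xi_r)$ is a martingale difference sequence.

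The second step is a bound on the increments. Since $Z_k\in[-\sqrt3,\sqrt3]$, conditionally on $\mathcal G_r$ the variable $Z_{S+r}$ lies in an interval of length $2\sqrt3$. Hoeffding's lemma then gives $\E[e^{\lambda\xi_r}\mid\mathcal G_r]\le e^{\lambda^2(2\sqrt3)^2/8}=e^{3\lambda^2/2}$. Iterating by the tower property gives $\E[\exp(\lambda\sum_{r<w}\xi_r)]\le e^{3w\lambda^2/2}$ for every real $\lambda$.

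For (i), observe that on $\mathcal E\cap\{\overline Z_S>H\}$ we have $\sum_r\xi_r> w(H-\delta)$, because $b_{S+r}\le\delta$ on $\mathcal E$. So $\pr(\mathcal E\cap\{\overline Z_S>H\})\le\pr(\sum_r\xi_r>w(H-\delta))$. We do not need $\mathcal E$ to be measurable with respect to anything, since we simply drop it. Markov's inequality with $\lambda=(H-\delta)/3$ then gives $\exp\{-w(H-\delta)^2/6\}$. Part (ii) is symmetric: on $\mathcal E\cap\{\overline Z_S\le H\}$ we have $\sum_r\xi_r\le -w(z-H)$, and we apply the same bound with $-\lambda$.

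For the final claim, we apply (i) with the deterministic stopping time $S=(j-1)w+1$ and $\mathcal E=\Omega$ for each $j\le\lfloor\kappa/w\rfloor$, and then take a union bound over $j$.

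There is no real obstacle here. The only point needing care is that the hypothesis on $b$ holds only on $\mathcal E$, which is not adapted. This is handled by comparing to the unconditional centered martingale sum rather than stopping the process on $\mathcal E$. The remaining bookkeeping is the constant: Hoeffding's lemma with range $2\sqrt3$ yields exactly the exponent $w t^2/6$.
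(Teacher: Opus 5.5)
Your proof is correct and follows the paper's argument. Both use conditional Hoeffding with range $2\sqrt3$ to get the factor $e^{3\lambda^2/2}$, iterate over the bin, apply Chernoff with $\lambda=\pm t/3$, and simply drop $\mathcal E$ after noting that it forces the centered sum past $\pm wt$. The only cosmetic difference is how the random start is handled: you work with the stopped $\sigma$-algebras $\cF_{S+r}$, whereas the paper proves the moment-generating-function bound conditionally on $\cF_s$ for each deterministic $s$ and then sums over $\{S=s\}\in\cF_s$, which avoids stopped $\sigma$-algebras altogether.
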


\begin{proof} 
Set $X_k:=Z_k-b_k$. Conditionally on $\cF_k$, the variable $X_k$ has mean zero and is supported on an interval of length at most $2\sqrt3$. Conditional Hoeffding's inequality therefore gives
\[
 \E[e^{\lambda X_k}\mid\cF_k]
 \wle e^{3\lambda^2/2},
 \qquad \lambda\in\R.
\]
For every deterministic starting index $s$, successive conditioning
on $\cF_{s+w-1},\ldots,\cF_s$ yields
\begin{equation}
\label{eq:bin_conditional_mgf}
    \E\left[
        \exp\left\{\lambda\sum_{r=0}^{w-1}X_{s+r}\right\}
        \,\middle|\,\cF_s
    \right]
    \le e^{3w\lambda^2/2}.
\end{equation}

To handle the random starting index $S$, observe that
$\{S=s\}\in\cF_s$ because $S$ is a stopping time. Thus,
using~\eqref{eq:bin_conditional_mgf},
\begin{align*}
    \E\left[
        \exp\left\{\lambda\sum_{r=0}^{w-1}X_{S+r}\right\}
    \right]
    &=
    \sum_{s=1}^{\kappa-w+1}
    \E\left[
        \mathbf1_{\{S=s\}}
        \exp\left\{\lambda\sum_{r=0}^{w-1}X_{s+r}\right\}
    \right]
    \\
    &=
    \sum_{s=1}^{\kappa-w+1}
    \E\left[
        \mathbf1_{\{S=s\}}
        \E\left[
            \exp\left\{\lambda\sum_{r=0}^{w-1}X_{s+r}\right\}
            \,\middle|\,\cF_s
        \right]
    \right]
    \\
    &\le
    e^{3w\lambda^2/2}
    \sum_{s=1}^{\kappa-w+1}\pr(S=s)
    =
    e^{3w\lambda^2/2}.
\end{align*}
Chernoff's bound, with $\lambda=t/3$ for the upper tail and $\lambda=-t/3$ for the lower tail, gives, for every $t>0$,
\begin{align*}
 \pr\left(\frac1w\sum_{r=0}^{w-1}X_{S+r}\ge t\right)
 \wle e^{-wt^2/6}
  \quad \text{ and } \quad
 \pr\left(\frac1w\sum_{r=0}^{w-1}X_{S+r}\le-t\right)
 \wle e^{-wt^2/6}.
\end{align*}

Under the assumption of (i), on $\mathcal E$,
\[
    \overline Z_S
    \le \delta+\frac1w\sum_{r=0}^{w-1}X_{S+r}.
\]
Consequently, $\mathcal E\cap\{\overline Z_S>H\}$ implies
$\frac1w\sum_{r=0}^{w-1}X_{S+r}>H-\delta$, proving (i).
Similarly, under the assumption of (ii), on $\mathcal E$,
\[
    \overline Z_S
    \ge z+\frac1w\sum_{r=0}^{w-1}X_{S+r},
\]
and the lower-tail bound proves (ii).
Finally, the assertion for deterministic bins follows from (i)
with $\mathcal E=\Omega$ and a union bound.
\end{proof}

\begin{lemma}[False-negative bound]
\label{lemma:FN_bound}
Let \((\mathcal F_k)_{k\ge1}\) be a filtration, and let \(Z_1,\ldots,Z_w\) be random variables such that \(Z_k\) is \(\cF_{k+1}\)-measurable. 
Assume that $Z_k\in[-\sqrt 3,\sqrt 3]$ almost surely and that \( \min_{1 \le k \le w} \E[Z_k\mid\mathcal F_{k}]\ge z .\) 
Then, for every \(H<z\), 
\[
 \pr\left( \frac1w\sum_{k=1}^w Z_k \le H \right)
 \wle \exp\left\{ -\frac{w(z-H)^2}{6} \right\}. 
\]
\end{lemma}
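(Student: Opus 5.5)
This is essentially the deterministic-start case of Lemma~\ref{lemma:FP_bound}(ii), so the plan is to redo that argument directly with $S\equiv1$ and $\mathcal E=\Omega$. Centre the variables: set $b_k:=\E[Z_k\mid\cF_k]$ and $X_k:=Z_k-b_k$. By assumption $b_k\ge z$ for every $1\le k\le w$, hence
\[
\frac1w\sum_{k=1}^w Z_k\ \ge\ z+\frac1w\sum_{k=1}^w X_k .
\]
So the event $\{\frac1w\sum_k Z_k\le H\}$ is contained in $\{\frac1w\sum_k X_k\le -(z-H)\}$, with $z-H>0$. It therefore suffices to bound the lower tail of the martingale-difference average.

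For the exponential moment, note that, conditionally on $\cF_k$, the variable $Z_k$ lies in $[-\sqrt3,\sqrt3]$, an interval of length $2\sqrt3$, and $X_k$ has conditional mean zero. Conditional Hoeffding's lemma then gives
\[
\E[e^{\lambda X_k}\mid\cF_k]\le \exp\{\lambda^2(2\sqrt3)^2/8\}=e^{3\lambda^2/2}\qquad\text{for all }\lambda\in\R .
\]
Because $Z_k$ is $\cF_{k+1}$-measurable and the filtration is increasing, $X_1,\ldots,X_{k-1}$ are $\cF_k$-measurable. Conditioning successively on $\cF_w,\cF_{w-1},\ldots,\cF_1$ (the tower property) therefore gives $\E[\exp\{\lambda\sum_{k=1}^wX_k\}]\le e^{3w\lambda^2/2}$.

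Finally, apply Chernoff's bound to the lower tail with $\lambda=-t/3$ and $t=z-H$:
\[
\pr\Bigl(\sum_k X_k\le -wt\Bigr)\le e^{-\lambda' wt+3w\lambda'^2/2}\Big|_{\lambda'=t/3}=e^{-wt^2/6}.
\]
This is exactly the claimed bound $\exp\{-w(z-H)^2/6\}$.

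There is no real obstacle here. The only point needing care is checking measurability, namely that each $X_j$ with $j<k$ is $\cF_k$-measurable, so that the successive conditioning is legitimate. Alternatively, one could simply cite Lemma~\ref{lemma:FP_bound}(ii) with $\kappa=w$, $S\equiv1$ (a trivial stopping time) and $\mathcal E=\Omega$.
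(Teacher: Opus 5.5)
Your proposal is correct and is essentially the paper's proof: you center $Z_k$ by its conditional mean, use the drift bound $b_k\ge z$ to reduce to the lower tail of $\sum_k X_k$ at level $-w(z-H)$, and apply the conditional Hoeffding bound $e^{3\lambda^2/2}$ with Chernoff at $\lambda=t/3$, exactly as in Lemma~\ref{lemma:FP_bound}. Your measurability check ($X_j$ is $\cF_k$-measurable for $j<k$) is correct, and so is your alternative of citing Lemma~\ref{lemma:FP_bound}(ii) with $\kappa=w$, $S\equiv1$, $\mathcal E=\Omega$.
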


\begin{proof}
Set \( X_k:=Z_k-\E[Z_k\mid\cF_{k}].\)
As in the proof of Lemma~\ref{lemma:FP_bound}, the variables \(X_k\) form a bounded martingale-difference sequence and satisfy the conditional Hoeffding bound
\[
 \pr\left( \sum_{k=1}^w X_k<-wt \right)
 \wle \exp\left(-\frac{w t^2}{6}\right).
\]
Because \(\E[Z_k\mid\mathcal F_{k}]\ge z\), we have
$\sum_{k=1}^w Z_k \wge \sum_{k=1}^w X_k+wz .$ 
Therefore,
\[
 \pr\left( \frac1w\sum_{k=1}^w Z_k \le H \right)
 \wle \pr\left( \sum_{k=1}^w X_k\le -w(z-H) \right)
 \wle \exp\left\{ -\frac{w(z-H)^2}{6} \right\}.
\]
\end{proof}

\subsection{Proof of Lemma~\ref{lm:jump_size}} \label{sec:prf_jump_size}

\begin{proof}[Proof of Lemma~\ref{lm:jump_size}]
Set $d_n:=np_n$ and $m_k:=p_nk(n-k)$.
Whenever $C_k>0$, the conditional law
$a_k\mid\cF_k\sim\Exp(C_k)$ gives
\begin{equation}
\label{eq:EZ_exact}
    \E[Z_k\mid\cF_k]
    =\sqrt3\,\frac{C_k-m_k}{C_k+m_k}.
\end{equation}
By Lemma~\ref{lem:first-entry}, the event $H:=\{K_1\le n^{1-\alpha}\log n\}$ has probability $1-o(1)$.

\smallskip
\noindent
\textit{(i) Pre-change.}
Couple the planted graph with
$\widetilde G\sim G(n,p_n)$ so that they agree on every pair
outside $\binom{V_1}{2}$, and use the same source and transmission
weights on shared edges.
Before the first visit to $V_1$, no internal edge of $V_1$ can
affect the exploration. Thus the two explorations and their cut
sizes coincide for every $k<K_1$.

On $H$, we have $K_1<n/2$ for all sufficiently large $n$.
Proposition~\ref{prop:homogeneous-cut-concentration}, applied to
$\widetilde G$, therefore gives, with high probability,
\[
    \sup_{1\le k<K_1}
    \left|\frac{C_k}{m_k}-1\right|
    \le\delta_n
    \quad \text{ where } \quad 
    \delta_n:=\sqrt{\frac{24\log(ed_n)}{d_n}}=o(1).
\]
Using \eqref{eq:EZ_exact}, we obtain on this event
$\sup\limits_{1\le k<K_1} \bigl|\E[Z_k\mid\cF_k]\bigr|
 \le\frac{\sqrt3\,\delta_n}{2-\delta_n}
 = o(1),
$
which proves (i).

\smallskip
\noindent
\textit{(ii) Post-change.}
Fix $D>0$ and consider $K_1\le k\le K_1+D\log n$.
Because $d_n\log n=o(n_1)$, we may apply Proposition~\ref{prop:I1_in_V1_bd}(i) with $m_n=\lceil D\log n\rceil+1=o(n_1/d_n)$.
Together with $\pr(H)=1-o(1)$, this gives, with high probability, $I_1(k)=j$ (where $j:=k-K_1+1$) uniformly over the stated interval. 
Thus,
\begin{equation}
\label{eq:Dk_early_dense}
    D_k:=I_1(k)(n_1-I_1(k))
    =j(n_1-j)\ge\frac12jn_1
\end{equation}
for all sufficiently large $n$.

Proposition~\ref{prop:G_npbx_cut_bound} gives
$C_k\ge\xi D_k-\Delta_k^{\mathrm{in}}$ with high probability.
Uniformly over the interval,
\[
 \frac{\Delta_k^{\mathrm{in}}}{D_k}
 \weq \cO\left(\sqrt{\frac{\log n}{n_1}}\right) \weq o(1),
\]
with $\Delta_k^{\mathrm{in}}=0$ when $\xi=1$.
As $\xi>0$ is fixed, \eqref{eq:Dk_early_dense} therefore yields
\begin{equation*}
%\label{eq:Ck_early_dense_lower}
    C_k\ge cjn_1
\end{equation*}
for some constant $c>0$, uniformly with high probability.

Using $k=K_1+j-1$ and $j\ge1$, we obtain on $H$
\begin{align*}
 \frac{m_k}{C_k}
 \wle C\frac{d_n(K_1+j)}{jn_1} 
 \wle C\left( \frac{d_n n^{1-\alpha}\log n}{n_1} + \frac{d_n}{n_1} \right)
 \weq o(1),
\end{align*}
where the last equality follows from
$d_n\log n=o(n^{2\alpha-1})$ and $n_1\asymp n^\alpha$.
All these bounds are uniform over the stated interval.
Finally, \eqref{eq:EZ_exact} gives
\[
 \left|\E[Z_k\mid\cF_k]-\sqrt3\right|
 \weq \frac{2\sqrt3\,m_k}{C_k+m_k}
 \wle 2\sqrt3\,\frac{m_k}{C_k}.
\]
Hence $\max\limits_{K_1\le k\le K_1+D\log n} \left|\E[Z_k\mid\cF_k]-\sqrt3\right|
 \weq o_{\pr}(1)$, which proves (ii).
\end{proof}

\subsection{Proof of Proposition~\ref{prop:K1_detection}} \label{sec:prf_K1_detection}

\begin{proof}[Proof of Proposition~\ref{prop:K1_detection}] 
We first work with the oracle variables $Z_k$. 
By Lemma~\ref{lem:PIT_plugin_stability} and Corollary~\ref{cor:p_hat_rate}, $\sup_{k\le n/2}|\widehat Z_k-Z_k|=o_{\pr}(1)$. Hence it is enough to establish the result for the oracle bin averages, and to transfer the conclusion to the feasible averages, we will introduce a fixed margin $\eta>0$ around the threshold $H_1=z/2$.

If necessary, set $Z_k=0$ once the cut vanishes; with probability $1-o(1)$, this convention is never used in the observation window.
Moreover, we write $b_k:=\E[Z_k\mid\cF_k]$. 
Because $c>12/z^2$ and $H_1=z/2$, we may choose fixed $\eta,\epsilon>0$ sufficiently small that
\begin{equation}
\label{eq:K1_eta_choice}
    H_1+\eta<z,
    \qquad
    H_1-\eta-\epsilon>0,
    \qquad
    \frac{c(H_1-\eta-\epsilon)^2}{6}>\frac12.
\end{equation}
By Lemma~\ref{lem:first-entry}, because $\alpha>1/2$ and $\kappa_1=\Theta(\sqrt n)$, we have  $\pr(K_1\le\kappa_1-2w)=1-o(1).$
Together with Lemma~\ref{lm:jump_size},
Lemma~\ref{lem:PIT_plugin_stability}, and
Corollary~\ref{cor:p_hat_rate}, this gives an event
$\mathcal E_n$, with $\pr(\mathcal E_n)=1-o(1)$, on which
\begin{align*}
 K_1\le\kappa_1-2w,
 \quad
 \sup_{k<K_1}b_k\le\epsilon,
 \quad 
 \inf_{K_1\le k\le K_1+2w}b_k\ge z, 
 \quad
 \sup_{k\le n/2}|\widehat Z_k-Z_k|\le\eta.
\end{align*}
For the post-entry drift bound, we apply
Lemma~\ref{lm:jump_size}(ii) with any fixed $D>2c$.

Let $B_j:=\{(j-1)w+1,\ldots,jw\}$, $L_1:=\lfloor\kappa_1/w\rfloor$, and $j_1:=\lceil K_1/w\rceil.$
On $\mathcal E_n$, the bin $B_{j_1+1}$ is entirely contained in the search window, and the oracle and feasible bin averages $Z_j$ and $\hZ_j$ differ by at most $\eta$.

\smallskip
\noindent
\textit{No crossing before $K_1$.}
For each deterministic bin $B_j$, define $\mathcal E_{n,j}:=\mathcal E_n\cap\{jw<K_1\}.$ 
On this event, $b_k\le\epsilon$ throughout $B_j$.
Lemma~\ref{lemma:FP_bound}(i), applied with starting index $(j-1)w+1$ and threshold $H_1-\eta$, therefore gives
\[
 \pr\bigl( \cE_{n,j}\cap \{\overline Z_j>H_1-\eta\} \bigr)
 \wle \exp\left\{-\frac{w(H_1-\eta-\epsilon)^2}{6}\right\}.
\]
Because $j<j_1$ is equivalent to $jw<K_1$, a union bound yields
\begin{align*}
 \pr\left( \cE_n\cap \{\exists j<j_1:\overline Z_j>H_1-\eta\} \right) 
 \wle L_1\exp\left\{ - \frac{w(H_1-\eta-\epsilon)^2}{6} \right\}
 \weq o(1),
\end{align*}
by~\eqref{eq:K1_eta_choice} and $L_1=O(\sqrt n/\log n)$.

\smallskip
\noindent
\textit{Crossing immediately after $K_1$.}
Round $K_1$ up to the next bin start and cap the result at
$\kappa_1$, defining the bounded stopping time
\[
    S:=\min\{w\lceil K_1/w\rceil+1,\kappa_1\}.
\]
For sufficiently large $n$, we have
$S+w-1\le\kappa_1+w-1\le n/2$.
On $\mathcal E_n$, the cap is inactive and
$\{S,\ldots,S+w-1\}=B_{j_1+1}$.
This bin lies entirely between $K_1$ and $K_1+2w$, so
$b_k\ge z$ throughout it.
Lemma~\ref{lemma:FP_bound}(ii), with event $\mathcal E_n$
and threshold $H_1+\eta$, gives
\[
 \pr\left( \cE_n\cap \{\overline Z_{j_1+1}\le H_1+\eta\} \right)
 \wle \exp\left\{-\frac{w(z-H_1-\eta)^2}{6}\right\}
 \weq o(1).
\]

Combining these bounds with $\pr(\cE_n^c)=o(1)$, we find that, with probability $1-o(1)$, no feasible bin strictly preceding $B_{j_1}$ crosses $H_1$, whereas $B_{j_1+1}$ does.
The first crossing therefore occurs in $B_{j_1}$ or $B_{j_1+1}$.
Because the algorithm returns $(j-1)w$ for the first crossing bin, and $(j_1-1)w<K_1\le j_1w$, we obtain 
\begin{align*}
 \pr\bigl(|\widehat K_1-K_1|\le w\bigr) \weq 1-o(1).
\end{align*}
\end{proof}

\subsection{Proof of Lemma~\ref{lm:K_2_Z_exp}} \label{sec:K_2_Z_exp}

 \begin{proof}[Proof of Lemma~\ref{lm:K_2_Z_exp}]
Recall from the proof of Lemma~\ref{lm:jump_size} that 
\begin{equation}
\label{eq:EZ_C_over_m}
    \mathbb E[Z_k\mid \mathcal F_k]
    =
    \sqrt 3\,
    \frac{C_k-m_k}{C_k+m_k}, 
\end{equation}
where $m_k:=p_nk(n-k)$.
Thus, for part~(i) it is enough to prove that $m_k/C_k=o(1)$
uniformly over the stated range, while for part~(ii) it is enough to
prove that $C_k/m_k=1+o(1)$ uniformly.

\smallskip
\noindent
\textit{Proof of (i).}
Write $j:=k-K_1+1$ and $R_k:=n_1-I_1(k)$. 
By Lemma~\ref{lem:first-entry}, with high probability $K_1\le n^{1-\alpha}\log n=o(s_n).$ 
Moreover, Proposition~\ref{prop:I1_in_V1_bd} gives, uniformly over the
range considered,
\[
 j-Cnp_n\log n \wle I_1(k)\wle j.
\]
Because $j\ge s_n-b\log n$ and $np_n\log n=o(s_n)$, it follows that
\begin{equation}
\label{eq:I1_comparable_j}
    I_1(k)=(1-o(1))j
    \quad \text{ and } \quad 
    k=(1+o(1))j,
\end{equation}
uniformly for $K_1+s_n-b\log n \le k \le K_2^{\star} - r_n$.

At the other endpoint, $k\le K_2^\star-r_n=K_1+n_1-r_n$, and hence, using the deterministic bound $I_1(k)\le k-K_1+1$,
\[
 R_k = n_1-I_1(k)
 \wge n_1-(k-K_1+1)
 \wge r_n-1
 \weq (1-o(1))r_n.
\]

Set $D_k:=I_1(k)R_k$ and $M_k:=\min\{I_1(k),R_k\}$. 
By Proposition~\ref{prop:G_npbx_cut_bound}, with high probability, uniformly over $k$, we have 
\[
    C_k\wge \xi D_k-\Delta_k^{\mathrm{in}}.
\]
If $\xi<1$, then
\[
    \frac{\Delta_k^{\mathrm{in}}}{D_k}
    \le
    C
    \sqrt{
        \frac{M_k\log(en_1)}{D_k}
    }
    =
    C
    \sqrt{
        \frac{\log(en_1)}
             {\max\{I_1(k),R_k\}}
    }
    \le
    C
    \sqrt{\frac{2\log(en_1)}{n_1}}
    =
    o(1),
\]
while for $\xi=1$ the fluctuation term is zero. Hence, uniformly over
the range considered,
\[
    C_k\wge c\,I_1(k)R_k
\]
for some constant $c>0$ and all sufficiently large $n$.

Using \eqref{eq:I1_comparable_j}, we therefore obtain
\[
 \frac{m_k}{C_k}
 \wle C\frac{p_nkn}{I_1(k)R_k}
 \wle C'\frac{np_n}{R_k}
 \wle C''\frac{np_n}{r_n}
 \weq o(1),
\]
uniformly for \(K_1+s_n-b\log n\le k\le K_2^\star-r_n\). 
Equation~\eqref{eq:EZ_C_over_m} then gives
\[
    \E[Z_k\mid\mathcal F_k]
    =
    \sqrt3(1-o(1)).
\]

\smallskip
\noindent
\textit{Proof of (ii).}
Fix $B>0$. By Lemma~\ref{lem:post_dense_cut}~(i),
\[
    \varepsilon_n
    :=
    \sup_{
        K_2\le k\le
        K_2+\lceil Bnp_n\log n\rceil
    }
    \left|
        \frac{C_k}{p_nk(n-k)}-1
    \right|
    =
    o_{\pr}(1).
\]
Hence, with probability tending to one, $\varepsilon_n=o(1)$ and,
uniformly over the same interval,
\[
    \left|\frac{C_k}{m_k}-1\right|
    \le \varepsilon_n.
\]
On the event $\{\varepsilon_n<1\}$, Equation~\eqref{eq:EZ_C_over_m}
therefore yields
\[
    \left|
        \E[Z_k\mid\mathcal F_k]
    \right|
    =
    \sqrt3
    \left|
        \frac{C_k/m_k-1}{C_k/m_k+1}
    \right|
    \le
    \sqrt3\,
    \frac{\varepsilon_n}{2-\varepsilon_n}
    =
    o(1)
\]
uniformly for $K_2 \le k\le K_2+\lceil Bnp_n\log n\rceil.$ 
This proves the second claim.
\end{proof}

\subsection{Proof of Proposition~\ref{prop:K2_detection}}

 \begin{proof}[Proof of Proposition~\ref{prop:K2_detection}] 
Write $d_n:=np_n$, $b_k:=\E[Z_k\mid\cF_k]$, and set $r_n:=d_n\log n$ and $s_n^\circ:=\frac12\bigl(\sqrt n+d_n\log^2 n\bigr).$ 
As in the proof of Proposition~\ref{prop:K1_detection},
extend $Z_k$ by zero after extinction if necessary.
Our assumptions imply
\[
    n^{1-\alpha}\log n+d_n\log n
    \ll s_n^\circ\ll n_1,
    \qquad
    d_n\ll r_n\ll n_1,
    \qquad
    w=o(r_n).
\]
Thus Lemma~\ref{lm:K_2_Z_exp} applies with
$s_n=s_n^\circ$ and $r_n=d_n\log n$.

By Corollary~\ref{cor:p_hat_rate},
\[
    \widehat s_n
    =(1+o_{\pr}(1))\bigl(\sqrt n+d_n\log^2 n\bigr).
\]
Hence, $\widehat s_n-s_n^\circ\gg w$ with high probability, and $\widehat s_n=o_{\pr}(n_1)$.
The first bin inspected by the endpoint procedure has first index
\[
    q_n
    :=\widehat K_1
      +w\left\lceil\frac{\widehat s_n}{w}\right\rceil+1
    =K_1+\widehat s_n+\cO_{\pr}(w),
\]
where we used Proposition~\ref{prop:K1_detection}.
It follows that, with high probability,
\[
    K_1+s_n^\circ \wle q_n \wle K_2,
\]
since $K_2\ge K_1+n_1$ whenever $V_1$ is fully explored.
Moreover, Lemma~\ref{lem:first-entry} and
Proposition~\ref{prop:I1_in_V1_bd} give
\begin{equation}
\label{eq:K2_completion_error}
    0\le K_2-K_2^\star=\cO_{\pr}(r_n),
    \qquad
    K_2+r_n=o_{\pr}(n).
\end{equation}

Because
$\frac{c(z-H_2)^2}{6} = \frac{3cz^2}{32}>1,$ 
we may choose fixed $\eta,\epsilon>0$ sufficiently small such that 
$H_2+\eta<z$, $\epsilon+2\eta<H_2$, and $\frac{c(z-H_2-\eta)^2}{6}>1.$ 
Let $\kappa:=\lfloor n/2\rfloor$ and
\[
    \cK_n:=[K_1+s_n^\circ,\;K_2^\star-r_n].
\]
The preceding estimates, Lemma~\ref{lm:K_2_Z_exp}, and
Lemma~\ref{lem:PIT_plugin_stability} yield an event
$\mathcal E_n$, with $\pr(\mathcal E_n)=1-o(1)$, on which
\begin{align*}
    K_1+s_n^\circ\le q_n\le K_2,
    \quad
    K_2+r_n\le\kappa,
    \quad 
    \inf_{k\in\cK_n}b_k\ge z,
    \quad
    \sup_{K_2\le k\le K_2+r_n}b_k\le\epsilon,
    \quad
    \sup_{k\le\kappa}|\widehat Z_k-Z_k|\le\eta.
\end{align*}

\smallskip
\noindent
\textit{(i) No stopping substantially before $K_2^\star$.}
For every deterministic bin
$B_j:=\{(j-1)w+1,\ldots,jw\}$, define
$\mathcal E_{n,j} :=\mathcal E_n\cap\{B_j\subseteq\cK_n\}.$ 
On $\mathcal E_{n,j}$, the drift satisfies $b_k\ge z$ throughout $B_j$.
Lemma~\ref{lemma:FP_bound}(ii), with threshold $H_2+\eta$, therefore gives
\[
    \pr\bigl(
        \mathcal E_{n,j}\cap
        \{\overline Z_j\le H_2+\eta\}
    \bigr)
    \le
    \exp\left\{-\frac{w(z-H_2-\eta)^2}{6}\right\}.
\]
Taking a union bound over all
$L:=\lfloor\kappa/w\rfloor$ deterministic bins yields
\begin{align*}
 \pr\left( \mathcal E_n\cap \{\exists j\le L: B_j\subseteq\cK_n,\; \overline Z_j\le H_2+\eta\} \right)
 \wle L\exp\left\{-\frac{w(z-H_2-\eta)^2}{6}\right\}
 \weq o(1).
\end{align*}
Here the last equality follows from $L=O(n/\log n)$
and the choice of $\eta$.

Outside this exceptional event, on $\mathcal E_n$,
every feasible bin average corresponding to a bin contained
in $\cK_n$ exceeds $H_2$.
Since the first inspected bin starts at or after
$K_1+s_n^\circ$, any downward crossing must occur in a bin
whose last index exceeds $K_2^\star-r_n$.
The reported left endpoint therefore satisfies
\begin{equation}
\label{eq:K2_lower_localization}
    \widehat K_2\ge K_2^\star-r_n-w,
\end{equation}
whenever the procedure finds a crossing.

\smallskip
\noindent
\textit{(ii) Stopping shortly after completion of $V_1$.}
Consider the bounded stopping time
\[
    S:=\min\left\{
        w\left\lceil\frac{K_2}{w}\right\rceil+1,\;
        \kappa-w+1
    \right\}.
\]
On $\mathcal E_n$, for sufficiently large $n$, the cap is
inactive, since $2w\le r_n$ and $K_2+r_n\le\kappa$.
Thus $S$ is the first index of the bin
$B_\ell$, where $\ell:=\lceil K_2/w\rceil+1$.
This bin lies entirely in $[K_2,K_2+r_n]$, so its drift
is bounded above by $\epsilon$ on $\mathcal E_n$.

Lemma~\ref{lemma:FP_bound}(i), with threshold $H_2-2\eta$,
gives
\[
    \pr\left(
        \mathcal E_n\cap
        \{\overline Z_\ell>H_2-2\eta\}
    \right)
    \le
    \exp\left\{
        -\frac{w(H_2-2\eta-\epsilon)^2}{6}
    \right\}
    =o(1).
\]
On the complementary event within $\mathcal E_n$,
the feasible average in this bin is at most
$H_2-\eta<H_2$.
Furthermore, $q_n\le K_2$, so this bin is eligible for
the endpoint search.
The procedure therefore finds a crossing no later than
$B_\ell$, giving
\begin{equation}
\label{eq:K2_upper_actual}
    \widehat K_2
    \le(\ell-1)w
    \le K_2+w.
\end{equation}

Combining the two exceptional-probability bounds with
$\pr(\mathcal E_n^c)=o(1)$, both
\eqref{eq:K2_lower_localization} and
\eqref{eq:K2_upper_actual} hold with high probability.
Together with~\eqref{eq:K2_completion_error} and
$w=o(r_n)$, they imply
\[
    |\widehat K_2-K_2^\star|
    =\cO_{\pr}(r_n)
    =\cO_{\pr}(np_n\log n).
\]
\end{proof}

\subsection{Proof of consistency of \texorpdfstring{$\hat\alpha$}{alpha}} \label{sec:pf_alpha_consistency}
\begin{proof}[Proof of Corollary~\ref{cor:alpha_consistency}]
We derive in this section the consistency rate for \(\widehat\alpha\) from the localization bounds for the dense-phase endpoints. Define $\widehat n_1:=\widehat K_2-\widehat K_1$ and $\widehat\alpha:=\frac{\log \widehat n_1}{\log n}$.
By the triangle inequality, 
\begin{align*}
|\widehat n_1-n_1|
\weq 
\left|
(\widehat K_2-\widehat K_1)
-
(K_2^\star-K_1)
\right| 
\wle
|\widehat K_2-K_2^\star|
+
|\widehat K_1-K_1|.
\end{align*}
The bounds $|\widehat K_1-K_1|=\cO_{\mathbb P}(\log n)$ and $|\widehat K_2-K_2^\star| =\cO_{\mathbb P}\!\left(pn\log n\right)$ together with $np \gg 1$ give  
\[
|\widehat n_1-n_1|
\weq
\cO_{\mathbb P}\!\left(pn\log n\right).
\]
Since \(n_1=\lfloor n^\alpha\rfloor\), this error is \(o_{\mathbb P}(n_1)\)
under the standing assumptions. Hence $\frac{\widehat n_1}{n_1} \wto 1$ in probability.

We now pass from \(\widehat n_1\) to \(\widehat\alpha\). 
Since $n_1=n^\alpha( 1 + \cO(n^{-\alpha}) )$, we have $\log n_1=\alpha\log n+\cO(n^{-\alpha})$. Therefore
\begin{align}
|\widehat\alpha-\alpha| 
\weq \left| \frac{\log \widehat n_1}{\log n} - \alpha \right| 
& \wle \frac{1}{\log n} \left| \log\widehat n_1-\log n_1 \right| + \frac{1}{\log n} \left| \log n_1 - \alpha \log n \right| \nonumber \\ 
%& \wle \frac{1}{\log n} \left| \log\widehat n_1-\log n_1 \right| + O\!\left(\frac1{n^{\alpha} \log n}\right) \nonumber \\
& \weq \frac{1}{\log n} \left| \log\left(\frac{\widehat n_1}{n_1}\right) \right| + \cO\!\left(\frac1{n^{\alpha} \log n}\right). \label{eq:in_proof_alpha}
\end{align}
Observe that the map \(x \in [1/2,3/2] \mapsto\log x\) is Lipschitz, since \(|(\log x)'|=1/x\le2\) on \([1/2,3/2]\). 
Since \(\widehat n_1/n_1\to1\) in probability, the event $\{ \frac12\le \frac{\widehat n_1}{n_1}\le \frac32 \}$ has probability tending to one. On this event, we have 
\[
    \left|
        \log\left(\frac{\widehat n_1}{n_1}\right)
    \right|
    \wle
    2\left|
        \frac{\widehat n_1}{n_1}-1
    \right|
    \weq
    2\frac{|\widehat n_1-n_1|}{n_1}
\]
with probability tending to one. Hence
\[
    \log\left(\frac{\widehat n_1}{n_1}\right)
    =
    O_{\mathbb P}\!\left(
        \frac{|\widehat n_1-n_1|}{n_1}
    \right).
\]
Consequently, going back to Equation~\eqref{eq:in_proof_alpha} we obtain 
\[
\left| \widehat\alpha-\alpha \right| 
\weq O_{\mathbb P}\!\left(\frac{pn\log n}{n^\alpha\log n}\right) + O\!\left(\frac1{n^{\alpha} \log n}\right).
\]
\end{proof}
\section{Proofs for Section~\ref{sec:estimation}}

\subsection{Additional proofs for the estimation of \texorpdfstring{$p$}{p}}
\label{app:proofs_estimation_p}

\begin{proof}[Proof of Lemma~\ref{lem:p_hat_concentration}]
Set $Y_k:=p\,k(n-k)X_k$ and $S:=\frac1M\sum_{k\in\mathcal W}Y_k$, so that we have $ \frac{ \hp_{\MLE} }{ p } =S^{-1}$.
The lemma's assumptions on the rates give
$\frac1{1+\varepsilon_+}
 \le \E[Y_k]
 \le \frac1{1-\varepsilon_-},$ 
and hence $|\E[S]-1|\le\varepsilon$.

Because $\mu_kX_k\sim\Exp(1)$, there exists an absolute constant $C_0>0$ such that
\[
 \|Y_k-\E[Y_k]\|_{\psi_1} \wle \frac{C_0}{1-\varepsilon_-}.
\]
By Bernstein's inequality for independent sub-exponential variables
\citep[Section~2.8]{vershynin2018high}, there exists an absolute
constant $C>0$ such that, for every $x\ge0$, with probability
at least $1-2e^{-x}$,
\[
 |S-\E[S]|
 \wle \frac{C}{1-\varepsilon_-} \left(\sqrt{\frac{x}{M}}+\frac{x}{M}\right)
 \weq u(x).
\]
On this event, $|S-1|\le\varepsilon+u(x)$. If $\varepsilon+u(x)<1$, then $S\ge1-\varepsilon-u(x)>0$, and therefore
\[
 \frac{|\hp_{\MLE}-p|}{p}
 \weq \left|\frac1S-1\right|
 \weq \frac{|S-1|}{S}
 \wle \frac{\varepsilon+u(x)}{1-\varepsilon-u(x)}.
\]
\end{proof}

\subsection{Proof of Proposition~\ref{prop:xi_consistency}}
\label{sec:pf_xi_consistency} 

The main difficulty in the proof is that the estimation window
$\mathcal W_\xi$ is itself random and depends on the observed
inter-arrival times through $\widehat K_1$ and $\widehat K_2$.
Thus, although conditionally on the exploration the variables
$a_k$ have mean $1/C_k$, we cannot directly use this conditional
centering after summing over the data-dependent set $\mathcal W_\xi$.

To avoid this issue, we introduce an oracle window. Let $r_n^\circ:=\left\lceil\frac{n_1}{\log n}\right\rceil$   and define
\[
    \mathcal W_\xi^\circ
    :=
    \left\{
        k:
        K_1+r_n^\circ
        \le k
        \le
        K_2^\star-r_n^\circ
    \right\}.
\]
Unlike $\mathcal W_\xi$, the endpoints of $\mathcal W_\xi^\circ$ depend only on the underlying exploration and not on the inter-arrival times.  This allows us to analyze sums over $\mathcal W_\xi^\circ$ conditionally on the exploration, while retaining the independence and exponential distribution of the $a_k$.

The proof therefore proceeds by first working on the window $\cW_\xi^\circ$ and then showing that replacing it by $\mathcal W_\xi$ has a negligible effect.
The choice $r_n^\circ$ is large compared with the localization errors of $\widehat K_1$ and $\widehat K_2$, but is still $o(n_1)$. 
Hence the two windows differ only near their endpoints and both retain $(1-o_{\pr}(1))n_1$ indices.

For this comparison, it is convenient to introduce the buffer window
\[
 \mathcal W_\xi^+
 := \left\{ k: K_1+\frac{r_n^\circ}{2} \le k \le K_2^\star-\frac{r_n^\circ}{2}
    \right\}.
\]
The following lemma establishes that, with high probability,
\[
    \mathcal W_\xi\cup\mathcal W_\xi^\circ
    \subseteq
    \mathcal W_\xi^+,
\]
so that all uniform approximations needed on $\mathcal W_\xi$ or on $\mathcal W_\xi^\circ$ can be established on $\cW_\xi^+$. 

\begin{lemma}[Comparison of the feasible and oracle windows]
\label{lem:xi_window_approx}
Under the assumptions of Theorem~\ref{thm:estimation}, we have 
$\widehat r_n
    =
    r_n^\circ+o_{\pr}(r_n^\circ),$ 
    $|\mathcal W_\xi\triangle\mathcal W_\xi^\circ| = o_{\pr}(r_n^\circ),$
and
$|\mathcal W_\xi|
    =
    (1+o_{\pr}(1))n_1.$ 
Moreover,
$\pr\left(
        \mathcal W_\xi\cup\mathcal W_\xi^\circ
        \subseteq
        \mathcal W_\xi^+
    \right)
    \weq 1 - o(1).$ 
\end{lemma}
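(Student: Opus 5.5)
The plan is to derive everything from the localization bounds already proved. By Proposition~\ref{prop:K1_detection} we have $|\widehat K_1-K_1|=\cO_{\pr}(\log n)$. By Proposition~\ref{prop:K2_detection} we have $|\widehat K_2-K_2^\star|=\cO_{\pr}(np_n\log n)$. Together these give $\widehat n_1-n_1=\cO_{\pr}(np_n\log n)$, as in the proof of Corollary~\ref{cor:alpha_consistency}.

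The key scale comparison is that this error is $o(r_n^\circ)$. Indeed $r_n^\circ\asymp n^\alpha/\log n$, and the assumption $np_n\ll n^{2\alpha-1}/\log n$ gives $np_n\log n\ll n^{2\alpha-1}\le n^\alpha/\log n$ for large $n$, using $\alpha<1$. We also note $\log n=o(r_n^\circ)$.

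First, $\widehat r_n=\lceil \widehat n_1/\log n\rceil$ and $r_n^\circ=\lceil n_1/\log n\rceil$. Hence
\[
|\widehat r_n-r_n^\circ|\le 1+\frac{|\widehat n_1-n_1|}{\log n}=\cO_{\pr}(np_n)=o(r_n^\circ),
\]
which proves the first claim.

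Next, both windows are integer intervals. The left endpoints differ by $|\widehat K_1-K_1|+|\widehat r_n-r_n^\circ|$, and the right endpoints differ by $|\widehat K_2-K_2^\star|+|\widehat r_n-r_n^\circ|$. The symmetric difference of two intervals is at most the sum of the endpoint discrepancies. This gives
\[
|\mathcal W_\xi\triangle\mathcal W_\xi^\circ|=\cO_{\pr}(np_n\log n+\log n)=o_{\pr}(r_n^\circ).
\]
For the size of $\mathcal W_\xi$, note that $|\mathcal W_\xi^\circ|=n_1-2r_n^\circ+\cO(1)=(1+o(1))n_1$, since $r_n^\circ=o(n_1)$. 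Therefore
\[
|\mathcal W_\xi|=|\mathcal W_\xi^\circ|+\cO(|\mathcal W_\xi\triangle\mathcal W_\xi^\circ|)=(1+o_{\pr}(1))n_1 .
\]

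For the inclusion, $\mathcal W_\xi^\circ\subseteq\mathcal W_\xi^+$ holds deterministically, since $r_n^\circ\ge r_n^\circ/2$. For $\mathcal W_\xi$, it suffices to show two inequalities with high probability:
\[
\widehat K_1+\widehat r_n\ge K_1+r_n^\circ/2
\qquad\text{and}\qquad
\widehat K_2-\widehat r_n\le K_2^\star-r_n^\circ/2 .
\]
Rewrite the first as $\widehat r_n-r_n^\circ/2\ge K_1-\widehat K_1$. The left side is $(1/2+o_{\pr}(1))r_n^\circ$, while the right side is $\cO_{\pr}(\log n)=o_{\pr}(r_n^\circ)$. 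Similarly, the second requires $(1/2+o_{\pr}(1))r_n^\circ\ge \widehat K_2-K_2^\star=o_{\pr}(r_n^\circ)$. Both hold with probability $1-o(1)$.

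No step is a real obstacle. The only point needing care is to check the scale comparison $np_n\log n=o(n_1/\log n)$ exactly under the hypotheses of Theorem~\ref{thm:estimation}, and to handle the ceilings by absorbing $\cO(1)$ terms.
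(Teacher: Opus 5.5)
Your proposal is correct and follows essentially the same route as the paper. Both arguments combine the localization bounds $|\widehat K_1-K_1|=\cO_{\pr}(\log n)$ and $|\widehat K_2-K_2^\star|=\cO_{\pr}(np_n\log n)$ with the scale comparison $np_n\log n+\log n=o(r_n^\circ)$, bound the symmetric difference of the two windows by their endpoint discrepancies, and obtain the inclusion in $\mathcal W_\xi^+$ from the $r_n^\circ/2$ margin. Your version is somewhat more explicit (it tracks the ceilings, gives the sharper bound $|\widehat r_n-r_n^\circ|=\cO_{\pr}(np_n)$, and checks the two inclusion inequalities separately), but the argument is the same.
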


\begin{proof}
The localization bounds give $|\widehat K_1-K_1| = \cO_{\pr}(\log n)$ and $|\widehat K_2-K_2^\star| = \cO_{\pr}(np\log n)$. Hence, 
$ |\widehat n_1-n_1| = \cO_{\pr}(np\log n+\log n).$ 
Moreover, by using $np\ll\frac{n^{2\alpha-1}}{\log n},$ 
$n_1\asymp n^\alpha,$ and $r_n^\circ\asymp\frac{n_1}{\log n},$ 
we obtain $np\log n=o(r_n^\circ)$ and $\log n=o(r_n^\circ).$
Therefore 
\[
    \widehat n_1
    =
    n_1+o_{\pr}(r_n^\circ)
    \quad \text{ and } \quad 
    \widehat r_n
    =
    r_n^\circ+o_{\pr}(r_n^\circ).
\]

The left endpoints of the intervals $\mathcal W_\xi$ and
$\mathcal W_\xi^\circ$ differ by
\[
    (\widehat K_1-K_1)
    +
    (\widehat r_n-r_n^\circ)
    =
    o_{\pr}(r_n^\circ),
\]
and the same holds for the right endpoints. As the symmetric difference of two intervals is bounded by the sum of the difference of their endpoints, we obtain 
\[
    |\mathcal W_\xi\triangle\mathcal W_\xi^\circ|
    =
    o_{\pr}(r_n^\circ).
\]

Moreover,
$|\mathcal W_\xi^\circ|
    =
    n_1-2r_n^\circ+1
    =
    (1+o(1))n_1,$ 
because $r_n^\circ=o(n_1)$. Because 
\[
    \bigl|
        |\mathcal W_\xi|-|\mathcal W_\xi^\circ|
    \bigr|
    \wle
    |\mathcal W_\xi\triangle\mathcal W_\xi^\circ|
    \weq 
    o_{\pr}(r_n^\circ)
    \weq 
    o_{\pr}(n_1),
\]
it follows that
\[
    |\mathcal W_\xi|
    =
    (1+o_{\pr}(1))n_1.
\]

Finally, $\mathcal W_\xi^\circ$ lies inside
$\mathcal W_\xi^+$ with a margin $r_n^\circ/2$ at each endpoint.
Since the corresponding endpoints of $\mathcal W_\xi$ differ from
those of $\mathcal W_\xi^\circ$ by $o_{\pr}(r_n^\circ)$,
\[
    \pr\left(
        \mathcal W_\xi\cup\mathcal W_\xi^\circ
        \subseteq\mathcal W_\xi^+
    \right)
    \weq 1-o(1).
\]
\end{proof}

For the rest of the proof, we introduce 
\[
 B_k:=pk(n-k),
 \qquad D_k:=I_1(k)(n_1-I_1(k)),
 \qquad \mu_k(q):=B_k+qD_k.
\]

The next lemma collects the uniform approximations that will be used throughout the remainder of the proof.  In particular, on the buffer window $\mathcal W_\xi^+$, the true cut rate is asymptotically dominated by the dense contribution, and the feasible quantities approximate their oracle counterparts.

\begin{lemma}[Uniform approximation on the bulk]
\label{lem:xi_bulk_approx}
Under the assumptions of Theorem~\ref{thm:estimation},
uniformly for $k\in\mathcal W_\xi^+$,
\[
    \frac{\widehat B_k}{B_k}=1+o_{\pr}(1),
    \qquad
    \frac{\widehat D_k}{D_k}=1+o_{\pr}(1),
    \qquad 
    \frac{B_k}{q_0D_k}=o_{\pr}(1),
    \qquad
    \frac{C_k}{\mu_k(q_0)}=1+o_{\pr}(1).
\]
\end{lemma}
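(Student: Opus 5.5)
The plan is to prove the four approximations in the order stated. Throughout I work on the intersection of the high-probability events already available: Lemma~\ref{lem:xi_window_approx}, the event of Lemma~\ref{lem:first-entry} that $K_1\le n^{1-\alpha}\log n$, Proposition~\ref{prop:I1_in_V1_bd}(ii), Proposition~\ref{prop:G_npbx_cut_bound}, and Propositions~\ref{prop:K1_detection} and~\ref{prop:K2_detection}. Recall that $r_n^\circ\asymp n_1/\log n$, and that under $np_n\ll n^{2\alpha-1}/\log n$ we have both $np_n\log n=o(r_n^\circ)$ and $np_n\log n\cdot\log n=o(n_1)$.

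\emph{First claim.} We have $\widehat B_k/B_k=\widehat p_n/p_n$ identically in $k$, so Corollary~\ref{cor:p_hat_rate} gives this claim uniformly over all $k$.

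\emph{Second claim.} Write $j:=k-K_1+1$. For $k\in\mathcal W_\xi^+$ we have $r_n^\circ/2\le j\le n_1-r_n^\circ/2+1$. Proposition~\ref{prop:I1_in_V1_bd}(ii) together with the deterministic bound $I_1(k)\le j$ gives $I_1(k)=j+\cO_{\pr}(np_n\log n)$. Hence
\[
I_1(k)=j\bigl(1+o_{\pr}(1)\bigr), \qquad n_1-I_1(k)=(n_1-j)\bigl(1+o_{\pr}(1)\bigr),
\]
uniformly, because both $j$ and $n_1-j$ are at least of order $r_n^\circ\gg np_n\log n$. Next, $\widehat s_k=j+\cO_{\pr}(\log n)$ by Proposition~\ref{prop:K1_detection}. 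Using also $\widehat n_1=n_1+\cO_{\pr}(np_n\log n)$ (as in the proof of Lemma~\ref{lem:xi_window_approx}), we get $\widehat n_1-\widehat s_k=(n_1-j)+\cO_{\pr}(np_n\log n+\log n)$. The same relative argument then gives $\widehat D_k/D_k=1+o_{\pr}(1)$ uniformly. The only care needed is that the errors are additive while both factors are bounded below by a multiple of $r_n^\circ$.

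\emph{Third claim.} We have $D_k\ge c\,j(n_1-j)\ge c'\,\max\{j,n_1-j\}\min\{j,n_1-j\}\ge c''n_1 r_n^\circ$. Meanwhile $B_k\le p_nkn$ with $k\le K_1+n_1\le 2n_1$. Therefore $B_k/(q_0D_k)\lesssim np_n/(q_0r_n^\circ)\asymp np_n\log n/n_1=o(1)$, since $q_0=\xi-p_n\to\xi>0$.

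\emph{Fourth claim, the main step.} I would write $C_k=C_k^{\mathrm{in}}+C_k^{\mathrm{out}}$ as in the proof of Proposition~\ref{prop:G_npbx_cut_bound}. That proposition gives $|C_k^{\mathrm{in}}-\xi D_k|\le\Delta_k^{\mathrm{in}}$, and $\Delta_k^{\mathrm{in}}/D_k\lesssim\sqrt{\log n_1/\max\{I_1,n_1-I_1\}}\lesssim\sqrt{\log n/n_1}=o(1)$. It also gives $0\le C_k^{\mathrm{out}}\le U_k=\cO(B_k+\log n)$, which is $o(D_k)$ by the third claim. Since $\mu_k(q_0)=B_k+q_0D_k=(1+o(1))q_0D_k$ and $\xi D_k=(1+o(1))q_0D_k$ (because $p_n\to0$), the ratio $C_k/\mu_k(q_0)$ tends to $1$ uniformly.

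The subtle point is the bookkeeping. The internal cut counted by Proposition~\ref{prop:G_npbx_cut_bound} already uses density $\xi$ and not $q_0$, and the discrepancy is $p_nD_k$, which is negligible. Beyond that, one must ensure that the uniform relative errors in $I_1(k)$ and in $n_1-I_1(k)$ hold near both ends of $\mathcal W_\xi^+$. This is exactly why the buffer $r_n^\circ/2$ must dominate $np_n\log n$, and that is where the hypothesis $np_n\ll n^{2\alpha-1}/\log n$ enters.
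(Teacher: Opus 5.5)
Your proposal is correct and follows essentially the same route as the paper. It uses the same ingredients in the same way:
\begin{itemize}
\item $\widehat B_k/B_k=\widehat p_n/p_n$;
\item relative control of $I_1(k)$, $n_1-I_1(k)$, $\widehat s_k$ and $\widehat n_1-\widehat s_k$, using that both factors are at least of order $r_n^\circ\gg np_n\log n$ on the buffer window;
\item the comparison $D_k\gtrsim n_1 r_n^\circ$ against $B_k=O(p_n n n_1)$;
\item Proposition~\ref{prop:G_npbx_cut_bound} for the cut.
\end{itemize}
The only cosmetic difference is that you compare $C_k$ and $\mu_k(q_0)$ separately to $q_0D_k$, whereas the paper bounds $|C_k-\mu_k(q_0)|\le\Delta_k^{\mathrm{in}}+U_k+p_n\bigl(k(n-k)-D_k\bigr)$ directly and divides by $q_0D_k$.
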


\begin{proof}
By Corollary~\ref{cor:p_hat_rate}, $\widehat p/p=1+o_{\pr}(1)$, which immediately gives
$\frac{\widehat B_k}{B_k}=1+o_{\pr}(1)$. 
Uniformly on $\mathcal W_\xi^+$, Proposition~\ref{prop:I1_in_V1_bd} gives
\[
    I_1(k)
    =
    k-K_1+1+\cO_{\pr}(np\log n).
\]
Because the buffer window remains at distance of order $r_n^\circ$ from both endpoints and $np\log n=o(r_n^\circ)$, we have $I_1(k)\ge c r_n^\circ$ and $n_1-I_1(k)\ge c r_n^\circ$ uniformly for some $c>0$. Together with Lemma~\ref{lem:xi_window_approx}, this implies
\[
 \widehat s_k = I_1(k)+o_{\pr}(r_n^\circ)
 \quad \text{ and } \quad 
 \widehat n_1-\widehat s_k = n_1-I_1(k)+o_{\pr}(r_n^\circ)
\]
uniformly on $\cW_\xi^+$, and hence
$\sup_{ k \in \cW_\xi^+ }\left| \frac{\widehat D_k}{D_k} - 1 \right| = o_{\pr}(1)$. 

Moreover,
$D_k = I_1(k)(n_1-I_1(k)) \ge c r_n^\circ n_1$
uniformly on $\mathcal W_\xi^+$. Because throughout the dense phase
$k=\cO_{\pr}(n^\alpha)$ and $n-k\asymp n$, we have 
$B_k=\cO_{\pr}(pn^{1+\alpha}),$ 
so
\[
    \sup_{k\in\mathcal W_\xi^+}
    \frac{B_k}{q_0D_k}
    \weq 
    \cO_{\pr}\left(\frac{np}{r_n^\circ}\right)
    \weq 
    o_{\pr}(1).
\]

Finally, Proposition~\ref{prop:G_npbx_cut_bound} gives, uniformly on
$\mathcal W_\xi^+$,
\begin{equation}
\label{eq:in_proof_upper_bound_Ckbulk}
    |C_k-\mu_k(q_0)|
    \le
    \Delta_k^{\rm in}
    + U_k
    + p_n\bigl(k(n-k)-D_k\bigr).
\end{equation}
Since $\mu_k(q_0)=B_k+q_0D_k\ge q_0D_k$, it is enough to divide the
right-hand side by $q_0D_k$. Using the bounds established above,
$D_k\ge c r_n^\circ n_1$, $M_k\le n_1$, $n-k\asymp n$, and the fact
that $q_0=\xi-p_n$ is bounded away from zero, we obtain uniformly on
$\mathcal W_\xi^+$,
\[
 \frac{\Delta_k^{\rm in}}{q_0D_k}=o(1),
 \quad
 \frac{U_k}{q_0D_k}=o_{\pr}(1),
 \quad \text{ and } \quad 
 \frac{p_n(k(n-k)-D_k)}{q_0D_k} \wle \frac{B_k}{q_0D_k} \weq o_{\pr}(1).
\]
We conclude that
$\sup_{k\in\mathcal W_\xi^+} \left| \frac{C_k}{\mu_k(q_0)}-1 \right| = o_{\pr}(1).$
\end{proof}

We now analyze the likelihood at the true value $q_0$. 
We first work on the oracle window $\mathcal W_\xi^\circ$, where conditioning on the exploration preserves the independent exponential law of the inter-arrival times.  
We will then transfer the resulting bound to the feasible likelihood based on $\mathcal W_\xi$.

Define $\mathcal J:=\sigma\bigl(G,V_1,(u_k)_{k\ge1}\bigr)$, that is, $\mathcal J$ contains the graph and the order in which vertices are explored, but not the inter-arrival times. 
The quantities $C_k$, $D_k$, and $\mathcal W_\xi^\circ$ are $\mathcal J$-measurable. Conditionally on $\mathcal J$, the inter-arrival times are independent and satisfy $a_k\sim\Exp(C_k)$. 

We first analyze the oracle score, that is, the derivative with respect to $q$ of the approximate log-likelihood restricted to $\mathcal W_\xi^\circ$, evaluated at $q_0$.

\begin{lemma}%[Oracle score]
\label{lem:xi_oracle_score}
Let $S_n^\circ(q_0) := \sum_{k\in\mathcal W_\xi^\circ} D_k \left( \frac{1}{\mu_k(q_0)}-a_k \right).$ 
Then $S_n^\circ(q_0)=o_{\pr}(n_1).$ 
\end{lemma}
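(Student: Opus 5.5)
Conditionally on $\mathcal J$, the window $\mathcal W_\xi^\circ$ and the weights $D_k$, $\mu_k(q_0)$ are fixed, and the $a_k$ are independent with $a_k\sim\Exp(C_k)$. The plan is therefore to split the score into a conditional-mean (bias) part and a centered (fluctuation) part:
\[
S_n^\circ(q_0)=\sum_{k\in\mathcal W_\xi^\circ} D_k\left(\frac1{\mu_k(q_0)}-\frac1{C_k}\right)
+\sum_{k\in\mathcal W_\xi^\circ} D_k\left(\frac1{C_k}-a_k\right)=:T_1+T_2 ,
\]
and to show that each term is $o_{\pr}(n_1)$.

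\emph{Bias term.} I will work on the high-probability event of Lemma~\ref{lem:xi_window_approx}, on which $\mathcal W_\xi^\circ\subseteq\mathcal W_\xi^+$. Lemma~\ref{lem:xi_bulk_approx} then gives, uniformly on this window, $\varepsilon_n:=\sup_k|C_k/\mu_k(q_0)-1|=o_{\pr}(1)$ and $\mu_k(q_0)\ge q_0D_k$. For each $k$,
\[
\left|\frac{D_k}{\mu_k(q_0)}-\frac{D_k}{C_k}\right|=\frac{D_k}{\mu_k(q_0)}\,\frac{|C_k-\mu_k(q_0)|}{C_k}\le \frac{1}{q_0}\,\frac{\varepsilon_n}{1-\varepsilon_n}.
\]
Since $|\mathcal W_\xi^\circ|\le n_1$ and $q_0$ is bounded away from zero, this yields $|T_1|\le n_1\,\cO(\varepsilon_n)=o_{\pr}(n_1)$.

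\emph{Fluctuation term.} Conditionally on $\mathcal J$, $T_2$ is a sum of independent centered variables, with
\[
\operatorname{Var}(T_2\mid\mathcal J)=\sum_{k\in\mathcal W_\xi^\circ} D_k^2/C_k^2 .
\]
On the same event, $C_k\ge(1-\varepsilon_n)q_0D_k$, so each summand is $\cO(1)$ and the conditional variance is $\cO(n_1)$. The conditional Chebyshev inequality then gives $\pr(|T_2|>\delta n_1\mid\mathcal J)\le \cO(1)/(\delta^2 n_1)$ on the good event; integrating over $\mathcal J$ gives $T_2=\cO_{\pr}(\sqrt{n_1})=o_{\pr}(n_1)$.

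\emph{Main obstacle.} The only delicate point is the legitimacy of conditioning on $\mathcal J$. The claim that the $a_k$ are independent $\Exp(C_k)$ given the entire exploration order should hold, as used in Corollary~\ref{cor:p_hat_rate}: by the competing-exponentials construction, the jump chain and the holding times decouple, so the order is independent of the holding times given the graph. In addition, the approximation event of Lemma~\ref{lem:xi_bulk_approx} depends only on $\mathcal J$ (through $C_k$, $D_k$ and $K_1$) and on $\widehat p$. To avoid circularity, I will restrict to the $\mathcal J$-measurable event $\{\mathcal W_\xi^\circ\subseteq\mathcal W_\xi^+\}\cap\{\sup_{k\in\mathcal W_\xi^+}|C_k/\mu_k(q_0)-1|\le\varepsilon\}$. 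This event involves only oracle quantities, not $\widehat p$, and has probability $1-o(1)$ for every fixed $\varepsilon$. I then let $\varepsilon\downarrow0$ at the end.
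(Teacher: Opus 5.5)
Your proposal is correct and follows the paper's proof essentially step for step. You use the same bias/fluctuation split ($A_n^\circ+M_n^\circ$), the same uniform bound via Lemma~\ref{lem:xi_bulk_approx} together with $D_k/\mu_k(q_0)\le 1/q_0$ for the bias, and conditional Chebyshev with $\operatorname{Var}(\cdot\mid\mathcal J)=\sum_k D_k^2/C_k^2=\cO_{\pr}(n_1)$ for the fluctuation. Your extra care in restricting to a $\mathcal J$-measurable good event, which the paper leaves implicit, is a harmless refinement; note also that $\mathcal W_\xi^\circ\subseteq\mathcal W_\xi^+$ holds deterministically, so that part of your event is automatic.
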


\begin{proof}
Write $S_n^\circ(q_0)=A_n^\circ+M_n^\circ,$
where
$A_n^\circ := \sum_{k\in\mathcal W_\xi^\circ} D_k \left( \frac{1}{\mu_k(q_0)}-\frac{1}{C_k} \right)$ 
and
$M_n^\circ := \sum_{k\in\mathcal W_\xi^\circ} D_k \left( \frac{1}{C_k}-a_k \right).$

We first control the term $A_n^\circ$, caused by replacing the true rate $C_k$ by $\mu_k(q_0)$. 
By Lemma~\ref{lem:xi_bulk_approx}, uniformly on $\mathcal W_\xi^\circ$,
$\frac{C_k}{\mu_k(q_0)}=1+o_{\pr}(1).$ 
Moreover,
\[
    \frac{D_k}{\mu_k(q_0)}
    \wle \frac{1}{q_0}
    \weq \cO(1),
\]
because $q_0=\xi-p\to\xi>0$. 
Hence
\begin{align}
\label{eq:in_proof_term_An}
 |A_n^\circ|
 \wle |\mathcal W_\xi^\circ| \sup_{k\in\mathcal W_\xi^\circ} \frac{D_k}{\mu_k(q_0)} \sup_{k\in\mathcal W_\xi^\circ} \frac{|C_k-\mu_k(q_0)|}{C_k} 
 \weq o_{\pr}(n_1).
\end{align}

We next control the random fluctuations.  Conditionally on $\mathcal J$, $\E[M_n^\circ\mid\mathcal J]=0$ 
and, by conditional independence,
\[
    \Var(M_n^\circ\mid\mathcal J)
    =
    \sum_{k\in\mathcal W_\xi^\circ}
    \frac{D_k^2}{C_k^2}.
\]
By Lemma~\ref{lem:xi_bulk_approx}, $\mu_k(q_0) = q_0D_k(1+o_{\pr}(1)),$ uniformly on $\mathcal W_\xi^\circ$, 
because $B_k/(q_0D_k)=o_{\pr}(1)$. 
Together with $C_k/\mu_k(q_0)=1+o_{\pr}(1)$, this gives $C_k=q_0D_k(1+o_{\pr}(1))$ uniformly, and thus 
\[
 \Var(M_n^\circ\mid\mathcal J)
 \weq \frac{|\mathcal W_\xi^\circ|}{q_0^2} (1+o_{\pr}(1))
 \weq \cO_{\pr}(n_1).
\]
Conditional Chebyshev's inequality then yields
\begin{align}
 \label{eq:in_proof_term_Mn}
 M_n^\circ
 \weq \cO_{\pr}(\sqrt{n_1})
 \weq o_{\pr}(n_1).
\end{align}
We finish the proof by combining~\eqref{eq:in_proof_term_An} and~\eqref{eq:in_proof_term_Mn}.
\end{proof}

\begin{proof}[Proof of Proposition~\ref{prop:xi_consistency}]
Recall that the feasible approximate log-likelihood is
\[
    \widehat\ell_n(q)
    =
    \sum_{k\in\mathcal W_\xi}
    \left(
        \log\widehat\mu_k(q)
        -
        a_k\widehat\mu_k(q)
    \right),
\]
and write its derivative as
\[
    \widehat S_n(q)
    :=
    \widehat\ell_n'(q)
    =
    \sum_{k\in\mathcal W_\xi}
    \widehat D_k
    \left(
        \frac{1}{\widehat\mu_k(q)}-a_k
    \right).
\]

\paragraph*{Step 1: Transfer the oracle bound to the feasible likelihood}
We first show that
\begin{equation}
\label{eq:xi_score_small}
    \widehat S_n(q_0)=o_{\pr}(n_1).
\end{equation}
On the common part
$\mathcal W_\xi\cap\mathcal W_\xi^\circ$,
Lemma~\ref{lem:xi_bulk_approx} gives $\frac{\widehat D_k}{D_k}=1+o_{\pr}(1)$ and $\frac{\widehat\mu_k(q_0)}{\mu_k(q_0)} =1+o_{\pr}(1)$, uniformly in $k$. Thus, 
$\frac{\widehat D_k}{\widehat\mu_k(q_0)} - \frac{D_k}{\mu_k(q_0)}
 = o_{\pr}(1).
$
Therefore the difference between the feasible and oracle scores on the
common part is bounded by
\begin{align*}
&|\mathcal W_\xi|
\sup_{k\in\mathcal W_\xi^+}
\left|
    \frac{\widehat D_k}{\widehat\mu_k(q_0)}
    -
    \frac{D_k}{\mu_k(q_0)}
\right| 
+
\sup_{k\in\mathcal W_\xi^+} \left| \frac{\widehat D_k}{D_k}-1 \right|
\sum_{k\in\mathcal W_\xi^+}a_kD_k .
\end{align*}
The first term is $o_{\pr}(n_1)$.  For the second, conditionally on
$\mathcal J$,
\[
    \E\left[
        \sum_{k\in\mathcal W_\xi^+}a_kD_k
        \,\middle|\,
        \mathcal J
    \right]
    =
    \sum_{k\in\mathcal W_\xi^+}\frac{D_k}{C_k}
    =
    \cO_{\pr}(n_1),
\]
and its conditional variance is also $\cO_{\pr}(n_1)$.
Thus
\[
    \sum_{k\in\mathcal W_\xi^+}a_kD_k
    =
    \cO_{\pr}(n_1),
\]
and the contribution from the common part is $o_{\pr}(n_1)$.

It remains to control the indices that belong to only one of the two
windows.  By Lemma~\ref{lem:xi_window_approx},
$|\mathcal W_\xi\triangle\mathcal W_\xi^\circ| = o_{\pr}(r_n^\circ),$
and, with probability tending to one, all these indices belong to
$\mathcal W_\xi^+$.  Moreover, Lemma~\ref{lem:xi_bulk_approx} implies
uniformly on $\mathcal W_\xi^+$ that
\[
    \max\left\{ 
    \frac{D_k}{C_k},
    \frac{\widehat D_k}{C_k},
    \frac{D_k}{\mu_k(q_0)},
    \frac{\widehat D_k}{\widehat\mu_k(q_0)}
    \right\}
    \weq \cO_{\pr}(1).
\]
Conditionally on $\mathcal J$, the variables
$\{C_ka_k:k\in\mathcal W_\xi^+\}$ are independent $\Exp(1)$, so
$\max_{k\in\mathcal W_\xi^+}C_ka_k
    =
    \cO_{\pr}(\log n).
$ 
Hence each score contribution on the symmetric difference is $\cO_{\pr}(\log n)$ uniformly, and their total contribution is $o_{\pr}(r_n^\circ)\cO_{\pr}(\log n) = o_{\pr}(n_1)$,
because $r_n^\circ\log n=(1+o(1))n_1$.
Together with Lemma~\ref{lem:xi_oracle_score}, this proves
\eqref{eq:xi_score_small}.

\paragraph*{Step 2: Control the curvature of the likelihood}
We have
\[
    -\widehat\ell_n''(q)
    =
    \sum_{k\in\mathcal W_\xi}
    \frac{\widehat D_k^2}{\widehat\mu_k(q)^2}
    =
    \sum_{k\in\mathcal W_\xi}
    \frac{1}{
        \left(q+\widehat B_k/\widehat D_k\right)^2
    }.
\]
By Lemma~\ref{lem:xi_bulk_approx},
$\sup_{k\in\mathcal W_\xi}
    \frac{\widehat B_k}{\widehat D_k}
    =
    o_{\pr}(1).$
Because $q_0\to\xi>0$ and
$|\mathcal W_\xi|=(1+o_{\pr}(1))n_1$, for every sufficiently small fixed
$\varepsilon>0$ there exists $c_\varepsilon>0$ such that
\begin{equation}
\label{eq:xi_curvature}
    \pr\left(
        \inf_{|q-q_0|\le\varepsilon}
        \bigl\{-\widehat\ell_n''(q)\bigr\}
        \ge c_\varepsilon n_1
    \right)
    \longrightarrow1.
\end{equation}
Thus moving $q$ a fixed distance away from $q_0$ changes the derivative
of the likelihood by an amount of order $n_1$, whereas
\eqref{eq:xi_score_small} shows that the derivative at $q_0$ is
$o_{\pr}(n_1)$.

\paragraph*{Step 3: Localize the maximizer.}
Suppose first that $\xi<1$.  Fix $\varepsilon>0$ small enough that,
with probability tending to one,
$q_0-\varepsilon$ and $q_0+\varepsilon$ both belong to
$[0,1-\widehat p]$.  Since
$\widehat S_n'(q)=\widehat\ell_n''(q)$,
\eqref{eq:xi_score_small} and \eqref{eq:xi_curvature} imply
\begin{align}
    \widehat S_n(q_0-\varepsilon)>0
    \quad \text{ and } \quad 
    \widehat S_n(q_0+\varepsilon)<0,
 \label{eq:in_proof_score}
\end{align}
with probability tending to one. 
As $\widehat\ell_n$ is concave, its score $\widehat S_n$ is nonincreasing. Hence, the inequalities in ~\eqref{eq:in_proof_score} imply that every maximizer lies in $[q_0-\varepsilon,q_0+\varepsilon]$.

If $\xi=1$, the same argument at $q_0-\varepsilon$ gives $\widehat q\ge q_0-\varepsilon$ with probability tending to one. On the other hand, by the parameter constraint,
\[
 \widehat q
 \wle 1-\widehat p
 \weq q_0+(p-\widehat p)
 \weq q_0+o_{\pr}(1).
\]
Thus, in both cases, $\widehat q-q_0=o_{\pr}(1).$ 
Finally, because $\widehat p-p=o_{\pr}(1)$, we conclude that 
\[
 \widehat\xi-\xi
 \weq (\widehat p-p)+(\widehat q-q_0)
 \weq o_{\pr}(1). 
\]
\end{proof}

\section{Sampling the Cut Size Process}
\label{app:cut_size_sampler}

For the numerical experiments in Section~\ref{sec:numerical_experiments}, it is useful to sample the cut-size process without first generating the entire graph. The following algorithm has the same distribution as the cut-size sequence of the first-passage exploration on \(G(n,p)\).  

\begin{algorithm}[!ht]
\caption{\textsc{SampleCutSizes}}
\label{algo:cut_sizes}
\KwInput{number of vertices $n$, edge probability $p$}

Let $H(1,1) \sim \text{Bin}(n-1, p)$, $H(0,1)=n-1 - H(1,1)$,  $H(i,1)= 0 \ \ \forall i \neq 0,1$ \label{ln:scs_init}

Let $C \gets [H(1,1)]$

\For{$k \in \{2, \cdots, n\}$ }
{   
    \If {\(\sum_j jH(j,k-1)=0\)}
    {append zeros to \(C\) until length \(n\)
    
    \KwReturn{C}}
    
    $\forall i: H(i,k) \gets H(i,k-1)$ \label{ln:scs_copy_over}
    
    $\text{deg}_u \gets \text{random choice from } \{i: H(i,k-1)>0\} \text{ with probability } p_i = \frac{iH(i,k-1)}{\sum_j jH(j,k-1)}$ \label{ln:scs_sample_u}

    $H(\text{deg}_u, k) \gets H(\text{deg}_u, k-1)-1$ \label{ln:scs_rmv_u}

    \For{$j \in [j: H(j,k)>0, \text{ descending order}]$}
    {
        $r \sim \text{Bin}(H(j,k), p)$  \label{ln:scs_sample_promotion}
        
        $H(j,k) \gets H(j,k) - r$

        $H(j+1,k) \gets H(j+1,k) + r$
    }

    $C$ append new value $\sum_j jH(j,k)$ \label{ln:scs_calc_Ck}

}
\KwReturn{C}
\end{algorithm}

\begin{lemma}%[Algorithm \ref{algo:cut_sizes} Correctness] 
\label{lm:algo_correctness}
 The cut size sequence $\left(C_k\right)_{k \in [n]}$ produced by \textsc{SampleCutSizes}$(n,p)$ has the same distribution as the sequence of cut sizes in a $\gnp$.
\end{lemma}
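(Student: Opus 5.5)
We show that the output of \textsc{SampleCutSizes} and the FPP cut-size sequence are the same Markov chain. The chain is driven by the \emph{degree profile}
\[
H(i,k):=\bigl|\{v\in\cI_k^c : v \text{ has exactly } i \text{ neighbours in } \cI_k\}\bigr|,
\]
and the cut size is recovered as $C_k=\sum_j jH(j,k)$.

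We work with a principle of deferred decisions. The edges from a vertex are revealed only when that vertex is explored, and they are revealed only towards currently unexplored vertices. Conditionally on the exploration history $\cF_k^{\mathrm{nat}}$ (together with the revealed edges), the pairs with both endpoints in $\cI_k^c$ are still unrevealed. They are therefore i.i.d.\ $\Ber(p)$, independent of everything observed. This holds because the exploration decisions up to step $k$ depend only on edges incident to $\cI_k$ and on the weights of those edges.

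The first step is initialization. The number of neighbours of $u_1$ is $\Bin(n-1,p)$. Hence $H(1,1)$ and $H(0,1)$ have the law set in line~\ref{ln:scs_init}, and $C_1=H(1,1)$.

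The second step is the transition. By memorylessness of the exponential weights, the next traversed edge is uniform among the $C_k$ cut edges. So the next explored vertex $v$ is chosen with probability proportional to its number $i$ of explored neighbours. Summing over the $H(i,k)$ vertices of degree class $i$, the class of $v$ is $i$ with probability $iH(i,k)/\sum_j jH(j,k)$, which is line~\ref{ln:scs_sample_u}. Removing $v$ decrements that class (line~\ref{ln:scs_rmv_u}).

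Next we reveal the fresh edges from $v$ to the remaining unexplored vertices. By the deferred-decision principle, these are i.i.d.\ $\Ber(p)$, independent of the profile and of the past. A vertex currently in class $j$ moves to class $j+1$ exactly when it is adjacent to $v$. So the number promoted from class $j$ is $\Bin(H(j,k),p)$, independently across classes. Processing the classes in descending order ensures that a promoted vertex is not promoted twice, which gives line~\ref{ln:scs_sample_promotion}.

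The third step is the Markov property and the conclusion. The conditional law of $H(\cdot,k+1)$ given the whole past depends only on $H(\cdot,k)$, and it coincides with the algorithm's transition kernel. The two profile chains therefore have the same initial law and the same kernel, hence the same law. Since $C_k$ is a fixed function of $H(\cdot,k)$, the cut sequences agree in law. The stopping rule also matches: $\sum_j jH(j,k)=0$ means the cut is empty, and by convention $C_k=0$ afterwards.

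The main obstacle is justifying rigorously that the profile alone is a sufficient statistic. The FPP selection depends on exponential clocks, and one must check that these carry no extra information about the unrevealed edges beyond the profile. I would handle this by conditioning on $\cF_k$ rather than the full graph, invoking memorylessness, and arguing by exchangeability of vertex labels. Within a degree class, vertices are exchangeable given the history, so which particular vertex is chosen does not affect the law of the future profile.
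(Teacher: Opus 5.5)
Your proof is correct and follows essentially the same route as the paper. Both use a deferred-decisions coupling: memorylessness gives the size-biased choice of degree class with probability $iH(i)/\sum_j jH(j)$, and the unrevealed edges from the newly explored vertex give independent $\operatorname{Bin}(H(j),p)$ promotions, with the descending order preventing double promotion. Your ``same initial law, same kernel'' Markov-chain phrasing is a restatement of the paper's step-by-step coupling. The sufficiency concern you flag is already settled by your deferred-decisions step, since the conditional law of the next profile given the full revealed state depends only on the current profile, so the exchangeability argument is not needed.
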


\begin{proof}
We prove the claim by coupling the algorithm with the FPP exploration of
\(G(n,p)\), using the deferred-decisions principle.

Let the FPP process start from a fixed vertex \(s\). For an infected set
\(\mathcal I_k\), define the cut-degree of an inactive vertex \(v\in \Ikc\) by
$d_{\mathcal I_k}(v) = |\{u\in \mathcal I_k:(u,v)\in E\}|.$ 
The corresponding cut-degree histogram is $H_{\Ik}(i) = |\{v\in \Ikc:d_{\mathcal I_k}(v)=i\}|$ and the cut size is $C_k=\sum_i iH_{\Ik}(i).$

We construct the graph exploration lazily. Initially, only the edges from the
source \(s\) to the other \(n-1\) vertices are revealed. Each such edge is
present independently with probability~\(p\). Therefore
$H_{\{s\}}(1)\sim \operatorname{Bin}(n-1,p),$ $H_{\{s\}}(0)=n-1-H_{\{s\}}(1),$ 
and \(H_{\{s\}}(i)=0\) for \(i\notin\{0,1\}\). This agrees with Line
\ref{ln:scs_init}.

Now suppose the current infected set is \(\mathcal I_k\), and write
$H(i)=H_{\mathcal I_k}(i).$ 
Let $C=\sum_i iH(i)$  be the current cut size. If \(C=0\), the connected component of the source has
been exhausted, and the cut size remains zero thereafter. Thus assume \(C>0\).

Conditionally on the currently revealed graph, the exponential clocks on the
cut edges are independent and have the same rate. By the memorylessness of the
exponential distribution, the next infection edge is therefore uniformly
distributed among the \(C\) cut edges. Hence, the probability that the newly
infected vertex has current cut-degree \(i\) is
\[
\frac{iH(i)}{\sum_j jH(j)}.
\]
This is exactly the sampling rule in Line \ref{ln:scs_sample_u}. After this
vertex is infected, it is removed from the inactive side, which corresponds to
subtracting one from the appropriate histogram bin, as in Line
\ref{ln:scs_rmv_u}.

It remains to update the cut-degrees of the remaining inactive vertices. Let
\(u\) be the newly infected vertex. For any remaining inactive vertex \(w\), the
edge \((u,w)\) was previously an edge between two inactive vertices. Hence it was
not revealed by the previous exploration. Since the underlying graph is
\(G(n,p)\), these edges are still conditionally independent Bernoulli\((p)\)
random variables.

Therefore, among the vertices that currently have cut-degree \(j\), the number
that are adjacent to \(u\) is distributed as $R_j\sim \operatorname{Bin}(H(j),p).$ 
Those \(R_j\) vertices move from bin \(j\) to bin \(j+1\). This is exactly the
promotion step in Lines \ref{ln:scs_sample_promotion}--\ref{ln:scs_calc_Ck}.
The descending order ensures that a vertex promoted from \(j\) to \(j+1\) is
not considered again in the same update.

Thus, the graph exploration and the algorithm can be coupled so that, at
every step, the histogram stored by the algorithm is exactly the cut-degree
histogram of the FPP exploration. Consequently, the cut size $C$ returned by the algorithm
has the same distribution as the cut size in the FPP exploration of \(G(n,p)\).
Therefore, the sequence returned by \textsc{SampleCutSizes} has the same
distribution as the true cut-size sequence.
\end{proof}

%% file: biblio.bib
@article{prokhorenkova2022less,
  title={When less is more: Systematic analysis of cascade-based community detection},
  author={Prokhorenkova, Liudmila and Tikhonov, Alexey and Litvak, Nelly},
  journal={ACM Transactions on Knowledge Discovery from Data (TKDD)},
  volume={16},
  number={4},
  pages={1--22},
  year={2022},
  publisher={ACM New York, NY}
}

@inproceedings{mossel_high_deg_2024,
	title = {Finding {Super}-spreaders in {Network} {Cascades}},
	url = {https://proceedings.mlr.press/v247/mossel24a.html},
	language = {en},
	urldate = {2025-03-11},
	booktitle = {Proceedings of {Thirty} {Seventh} {Conference} on {Learning} {Theory}},
	publisher = {PMLR},
	author = {Mossel, Elchanan and Sridhar, Anirudh},
	month = {06},
	year = {2024},
	note = {ISSN: 2640-3498},
	pages = {3874--3914},
}

@article{brandenberger_detecting_change_points_2025,
  title={Detecting {Abrupt} {Changes} in {Point} {Processes}: {Fundamental} {Limits} and {Applications}},
  author={Brandenberger, Anna and Mossel, Elchanan and Sridhar, Anirudh},
  journal={arXiv preprint arXiv:2501.08392},
  year={2025},
  url = {http://arxiv.org/abs/2501.08392},
  doi = {10.48550/arXiv.2501.08392}
}

@article{janson_one_1999,
    title = {One, {Two} and {Three} {Times} $\log n/n$ for {Paths} in a {Complete} {Graph} with {Random} {Weights}},
    volume = {8},
    issn = {1469-2163, 0963-5483},
    language = {en},
    number = {4},
    urldate = {2023-10-26},
    journal = {Combinatorics, Probability and Computing},
    author = {Janson, Svante},
    month = {07},
    year = {1999},
    pages = {347--361},
    doi={10.1017/S0963548399003892}
}

@book{vershynin2018high,
  title={High-dimensional probability: An introduction with applications in data science},
  author={Vershynin, Roman},
  volume={47},
  year={2018},
  publisher={Cambridge University Press},
  doi={10.1017/9781108231596}
}

@article{netrapalli_learning_2012,
    title = {Learning the graph of epidemic cascades},
    volume = {40},
    issn = {0163-5999},
    url = {https://doi.org/10.1145/2318857.2254783},
    doi = {10.1145/2318857.2254783},
    number = {1},
    urldate = {2023-02-20},
    journal = {ACM SIGMETRICS Performance Evaluation Review},
    author = {Netrapalli, Praneeth and Sanghavi, Sujay},
    month = {06},
    year = {2012},
    pages = {211--222},
}

@article{gray_bayesian_2020,
    title = {Bayesian {Inference} of {Network} {Structure} {From} {Information} {Cascades}},
    volume = {6},
    issn = {2373-776X},
    doi = {10.1109/TSIPN.2020.2990276},
    journal = {IEEE Transactions on Signal and Information Processing over Networks},
    author = {Gray, Caitlin and Mitchell, Lewis and Roughan, Matthew},
    year = {2020},
    pages = {371--381},
}

@inproceedings{abrahao_trace_2013,
    address = {New York, NY, USA},
    series = {{KDD} '13},
    title = {Trace complexity of network inference},
    isbn = {978-1-4503-2174-7},
    url = {https://dl.acm.org/doi/10.1145/2487575.2487664},
    doi = {10.1145/2487575.2487664},
    urldate = {2023-07-04},
    booktitle = {Proceedings of the 19th {ACM} {SIGKDD} international conference on {Knowledge} discovery and data mining},
    publisher = {Association for Computing Machinery},
    author = {Abrahao, Bruno and Chierichetti, Flavio and Kleinberg, Robert and Panconesi, Alessandro},
    month = {08},
    year = {2013},
    pages = {491--499},
}

@inproceedings{wilinski_prediction-centric_2021,
    title = {Prediction-{Centric} {Learning} of {Independent} {Cascade} {Dynamics} from {Partial} {Observations}},
    issn = {2640-3498},
    url = {https://proceedings.mlr.press/v139/wilinski21a.html},
    language = {en},
    urldate = {2023-05-20},
    booktitle = {Proceedings of the 38th {International} {Conference} on {Machine} {Learning}},
    publisher = {PMLR},
    author = {Wilinski, Mateusz and Lokhov, Andrey},
    month = {07},
    year = {2021},
    pages = {11182--11192},
}

@article{stargel_case_2022,
    title = {Case {Investigation} and {Contact} {Tracing} {Efforts} {From} {Health} {Departments} in the {United} {States}, {November} 2020 to {December} 2021},
    volume = {75},
    issn = {1058-4838},
    url = {https://doi.org/10.1093/cid/ciac442},
    doi = {10.1093/cid/ciac442},
    number = {Supplement\_2},
    urldate = {2026-04-01},
    journal = {Clinical Infectious Diseases},
    author = {Stargel, Alison and Taylor, Melanie M and Zansky, Shelley and Spencer, Kimberly and Hogben, Matthew and Shultz, Alvin},
    month = {10},
    year = {2022},
    pages = {S326--S333},
}

@inproceedings{gomez_rodriguez_inferring_2010,
    address = {New York, NY, USA},
    series = {{KDD} '10},
    title = {Inferring networks of diffusion and influence},
    isbn = {978-1-4503-0055-1},
    url = {https://dl.acm.org/doi/10.1145/1835804.1835933},
    doi = {10.1145/1835804.1835933},
    urldate = {2026-04-15},
    booktitle = {Proceedings of the 16th {ACM} {SIGKDD} international conference on {Knowledge} discovery and data mining},
    publisher = {Association for Computing Machinery},
    author = {Gomez Rodriguez, Manuel and Leskovec, Jure and Krause, Andreas},
    month = {07},
    year = {2010},
    pages = {1019--1028},
}

@techreport{matula_largest_1976,
    address = {Dallas, Texas},
    title = {The {Largest} {Clique} {Size} in a {Random} {Graph}},
    institution = {Southern Methodist University},
    author = {Matula, David},
    month = {04},
    year = {1976},
}

@article{grimmett_colouring_1975,
    title = {On colouring random graphs},
    volume = {77},
    issn = {1469-8064, 0305-0041},
    url = {https://www.cambridge.org/core/journals/mathematical-proceedings-of-the-cambridge-philosophical-society/article/on-colouring-random-graphs/38719DFD65620CD94A379293A3E50B7F},
    doi = {10.1017/S0305004100051124},
    language = {en},
    number = {2},
    urldate = {2026-04-17},
    journal = {Mathematical Proceedings of the Cambridge Philosophical Society},
    author = {Grimmett, G. R. and McDiarmid, C. J. H.},
    month = {03},
    year = {1975},
    pages = {313--324},
}

@article{kucera_expected_1995,
    series = {Combinatorial optimization 1992},
    title = {Expected complexity of graph partitioning problems},
    volume = {57},
    issn = {0166-218X},
    url = {https://www.sciencedirect.com/science/article/pii/0166218X9400103K},
    doi = {10.1016/0166-218X(94)00103-K},
    number = {2},
    urldate = {2026-04-20},
    journal = {Discrete Applied Mathematics},
    author = {Kučera, Luděk},
    month = {02},
    year = {1995},
    pages = {193--212},
}

@inproceedings{alon_finding_1998,
    address = {USA},
    series = {{SODA} '98},
    title = {Finding a large hidden clique in a random graph},
    isbn = {978-0-89871-410-4},
    url = {https://dl.acm.org/doi/10.5555/314613.315014},
    urldate = {2026-04-17},
    booktitle = {Proceedings of the ninth annual {ACM}-{SIAM} symposium on {Discrete} algorithms},
    publisher = {Society for Industrial and Applied Mathematics},
    author = {Alon, Noga and Krivelevich, Michael and Sudakov, Benny},
    month = {01},
    year = {1998},
    pages = {594--598},
}

@article{feige_finding_2000,
    title = {Finding and certifying a large hidden clique in a semirandom graph},
    volume = {16},
    copyright = {Copyright © 2000 John Wiley \& Sons, Inc.},
    issn = {1098-2418},
    url = {https://onlinelibrary.wiley.com/doi/abs/10.1002/%28SICI%291098-2418%28200003%2916%3A2%3C195%3A%3AAID-RSA5%3E3.0.CO%3B2-A},
    doi = {10.1002/(SICI)1098-2418(200003)16:2<195::AID-RSA5>3.0.CO;2-A},
    language = {en},
    number = {2},
    urldate = {2026-04-20},
    journal = {Random Structures \& Algorithms},
    author = {Feige, Uriel and Krauthgamer, Robert},
    year = {2000},
    pages = {195--208},
}

@article{dekel_finding_2014,
    title = {Finding {Hidden} {Cliques} in {Linear} {Time} with {High} {Probability}},
    volume = {23},
    issn = {0963-5483, 1469-2163},
    url = {https://www.cambridge.org/core/journals/combinatorics-probability-and-computing/article/finding-hidden-cliques-in-linear-time-with-high-probability/88A05D6B07878D54E62433DA72190623},
    doi = {10.1017/S096354831300045X},
    language = {en},
    number = {1},
    urldate = {2026-04-20},
    journal = {Combinatorics, Probability and Computing},
    author = {Dekel, Yael and Gurel-Gurevich, Ori and Peres, Yuval},
    month = {01},
    year = {2014},
    pages = {29--49},
}

@inproceedings{feige_finding_2010,
    title = {Finding hidden cliques in linear time},
    booktitle = {DMTCS Proceedings vol. AM, 21st International Meeting on Probabilistic, Combinatorial, and Asymptotic Methods in the Analysis of Algorithms (AofA'10)},
    issn = {1365-8050},
    url = {https://dmtcs.episciences.org/2802},
    pages = {189-204},
    doi = {10.46298/dmtcs.2802},
    journal = {Discrete Mathematics \& Theoretical Computer Science},
    author = {Feige, Uriel and Ron, Dorit},
    year = {2010},
}

@article{deshpande_finding_2015,
    title = {Finding {Hidden} {Cliques} of {Size} $\sqrt{N/e}$ in {Nearly} {Linear} {Time}},
    volume = {15},
    issn = {1615-3375, 1615-3383},
    url = {http://link.springer.com/10.1007/s10208-014-9215-y},
    doi = {10.1007/s10208-014-9215-y},
    language = {en},
    number = {4},
    urldate = {2026-04-20},
    journal = {Foundations of Computational Mathematics},
    author = {Deshpande, Yash and Montanari, Andrea},
    month = {08},
    year = {2015},
    pages = {1069--1128},
}

@article{gamarnik_landscape_2024,
    title = {The landscape of the planted clique problem: {Dense} subgraphs and the overlap gap property},
    volume = {34},
    issn = {1050-5164, 2168-8737},
    shorttitle = {The landscape of the planted clique problem},
    url = {https://projecteuclid.org/journals/annals-of-applied-probability/volume-34/issue-4/The-landscape-of-the-planted-clique-problem--Dense-subgraphs/10.1214/23-AAP2003.full},
    doi = {10.1214/23-AAP2003},
    language = {en},
    number = {4},
    urldate = {2026-04-17},
    journal = {The Annals of Applied Probability},
    publisher = {Institute of Mathematical Statistics},
    author = {Gamarnik, David and Zadik, Ilias},
    month = {08},
    year = {2024},
    pages = {3375--3434},
}

@article{arias-castro_community_2014,
    author = {Ery Arias-Castro and Nicolas Verzelen},
    title = {{Community detection in dense random networks}},
    volume = {42},
    journal = {The Annals of Statistics},
    number = {3},
    publisher = {Institute of Mathematical Statistics},
    pages = {940 -- 969},
    year = {2014},
    doi = {10.1214/14-AOS1208},
    URL = {https://doi.org/10.1214/14-AOS1208}
}

@article{verzelen_community_2015,
    title = {Community detection in sparse random networks},
    volume = {25},
    issn = {1050-5164, 2168-8737},
    url = {https://projecteuclid.org/journals/annals-of-applied-probability/volume-25/issue-6/Community-detection-in-sparse-random-networks/10.1214/14-AAP1080.full},
    doi = {10.1214/14-AAP1080},
    language = {en},
    number = {6},
    urldate = {2026-04-20},
    journal = {The Annals of Applied Probability},
    publisher = {Institute of Mathematical Statistics},
    author = {Verzelen, Nicolas and Arias-Castro, Ery},
    month = {12},
    year = {2015},
    pages = {3465--3510},
}

@inproceedings{hajek_computational_2015,
    title = {Computational {Lower} {Bounds} for {Community} {Detection} on {Random} {Graphs}},
    issn = {1938-7228},
    url = {https://proceedings.mlr.press/v40/Hajek15.html},
    language = {en},
    urldate = {2026-04-20},
    booktitle = {Proceedings of {The} 28th {Conference} on {Learning} {Theory}},
    publisher = {PMLR},
    author = {Hajek, Bruce and Wu, Yihong and Xu, Jiaming},
    month = {06},
    year = {2015},
    pages = {899--928},
}

@inproceedings{hajek_information_2016,
    title = {Information limits for recovering a hidden community},
    issn = {2157-8117},
    url = {https://ieeexplore.ieee.org/document/7541628},
    doi = {10.1109/ISIT.2016.7541628},
    urldate = {2026-04-20},
    booktitle = {2016 {IEEE} {International} {Symposium} on {Information} {Theory} ({ISIT})},
    author = {Hajek, Bruce and Wu, Yihong and Xu, Jiaming},
    month = {07},
    year = {2016},
    note = {ISSN: 2157-8117},
    pages = {1894--1898},
}

@article{montanari_finding_2015,
    title = {Finding {One} {Community} in a {Sparse} {Graph}},
    volume = {161},
    issn = {1572-9613},
    url = {https://doi.org/10.1007/s10955-015-1338-2},
    doi = {10.1007/s10955-015-1338-2},
    language = {en},
    number = {2},
    urldate = {2026-04-20},
    journal = {Journal of Statistical Physics},
    author = {Montanari, Andrea},
    month = {10},
    year = {2015},
    pages = {273--299},
}

@inproceedings{brennan_reducibility_2018,
    title = {Reducibility and {Computational} {Lower} {Bounds} for {Problems} with {Planted} {Sparse} {Structure}},
    issn = {2640-3498},
    url = {https://proceedings.mlr.press/v75/brennan18a.html},
    language = {en},
    urldate = {2026-04-21},
    booktitle = {Proceedings of the 31st  {Conference} {On} {Learning} {Theory}},
    publisher = {PMLR},
    author = {Brennan, Matthew and Bresler, Guy and Huleihel, Wasim},
    month = {06},
    year = {2018},
    pages = {48--166},
}

@inproceedings{brennan_universality_2019,
    title = {Universality of {Computational} {Lower} {Bounds} for {Submatrix} {Detection}},
    issn = {2640-3498},
    url = {https://proceedings.mlr.press/v99/brennan19a.html},
    language = {en},
    urldate = {2026-04-21},
    booktitle = {Proceedings of the {Thirty}-{Second} {Conference} on {Learning} {Theory}},
    publisher = {PMLR},
    author = {Brennan, Matthew and Bresler, Guy and Huleihel, Wasim},
    month = {06},
    year = {2019},
    pages = {417--468},
}

@article{bhamidi_first_2010,
    title = {First passage percolation on random graphs with finite mean degrees},
    volume = {20},
    issn = {1050-5164},
    pages = {1907 -- 1965},
    url = {https://projecteuclid.org/journals/annals-of-applied-probability/volume-20/issue-5/First-passage-percolation-on-random-graphs-with-finite-mean-degrees/10.1214/09-AAP666.full},
    doi = {10.1214/09-AAP666},
    number = {5},
    urldate = {2025-11-03},
    journal = {The Annals of Applied Probability},
    author = {Bhamidi, Shankar and Van Der Hofstad, Remco and Hooghiemstra, Gerard},
    month = {10},
    year = {2010},
}

@article{barbour_approximating_2013,
author = {Andrew Barbour and Gesine Reinert},
title = {{Approximating the epidemic curve}},
volume = {18},
journal = {Electronic Journal of Probability},
publisher = {Institute of Mathematical Statistics and Bernoulli Society},
pages = {1 -- 30},
year = {2013},
doi = {10.1214/EJP.v18-2557},
URL = {https://doi.org/10.1214/EJP.v18-2557}
}

@article{bhamidi_front_2014,
    title = {The front of the epidemic spread and first passage percolation},
    volume = {51},
    issn = {0021-9002, 1475-6072},
    url = {http://arxiv.org/abs/1401.0914},
    doi = {10.1239/jap/1417528470},
    language = {en},
    number = {A},
    urldate = {2026-04-17},
    journal = {Journal of Applied Probability},
    author = {Bhamidi, Shankar and Hofstad, Remco van der and Komjathy, Julia},
    month = {12},
    year = {2014},
    pages = {101--121},
}
